\pgfplotsset{compat=1.17}
\title[Five ways to recover the symbol of a non-binary localization operator]{Five ways to recover the symbol of a\\ non-binary localization operator}
\author{Simon Halvdansson}
\address{Department of Mathematical Sciences, Norwegian University of Science and Technology, 7491 Trondheim, Norway.}
\email{simon.halvdansson@ntnu.no}
\date{\monthyeardate\today}
\theoremstyle{plain}
\newtheorem{theorem}{Theorem}[section]
\newtheorem*{theorem*}{Theorem}
\newtheorem{lemma}[theorem]{Lemma}
\newtheorem{proposition}[theorem]{Proposition}
\theoremstyle{definition}
\newtheorem{example}[theorem]{Example}
\theoremstyle{remark}
\newtheorem*{remark}{Remark}
\newcommand{\C}{\mathbb{C}}
\newcommand{\R}{\mathbb{R}}
\newcommand{\N}{\mathbb{N}}
\newcommand{\Var}{\operatorname{Var}}
\newcommand{\tr}{\operatorname{tr}}
\newcommand{\vast}{\bBigg@{4}}
\newcommand{\Vast}{\bBigg@{5}}
\def\XXint#1#2#3{{\setbox0=\hbox{$#1{#2#3}{\int}$ }
		\vcenter{\hbox{$#2#3$ }}\kern-.6\wd0}}
\begin{document}
    \maketitle
    \begin{abstract}\vspace{-9mm}
        Five constructive methods for recovering the symbol of a time-frequency localization operator with non-binary symbol are presented, two based on earlier work and three novel methods. For the two derivative methods which have previously been applied to binary symbols, we propose a changed symbol estimator and provide additional estimates that show how we can recover non-binary symbols. The three novel methods each have their own advantages and are all applicable to non-binary symbols. Two of them rely on prescribing the input of the localization operator and examining the output, allowing for targeting of the part of the symbol one wishes to recover while the last one relies on spectral information about the operator. All five methods are also implemented numerically and evaluated with the code available.
	\vspace{6mm}
    \end{abstract}
	
    \renewcommand{\thefootnote}{\fnsymbol{footnote}}
    \footnotetext{\emph{Keywords:} Localization operator, Inverse problem, Operator identification, Symbol recovery.}
    \renewcommand{\thefootnote}{\arabic{footnote}}

    \newcounter{SimonCounter}
    \newcommand{\simon}[1]{{\small \color{red}
    		\refstepcounter{SimonCounter}\textsf{[SH]$_{\arabic{SimonCounter}}$:{#1}}}}
    		
    \newcounter{FranzCounter}
    \newcommand{\franz}[1]{{\small \color{blue}
    		\refstepcounter{FranzCounter}\textsf{[FL]$_{\arabic{FranzCounter}}$:{#1}}}}

    \section{Introduction and main results}
    Arguably the main tool of time-frequency analysis is the \emph{short-time Fourier transform}, defined for a signal $\psi \in L^2(\R^d)$ and window $g \in L^2(\R^d)$ as
    \begin{align*}
        V_g \psi(x,\omega) = \int_{\R^d} \psi(t) \overline{g(t-x)}e^{-2\pi i \omega t}\,dt
    \end{align*}
    where the variables $x,\omega \in \R^d$ are referred to as the time and frequency, respectively. A standard result \cite{grochenig_book} states that this mapping can be inverted so that the signal $\psi$ can be recovered from $V_g\psi$ weakly as
    \begin{align*}
        \psi = \int_{\R^{2d}} V_g\psi(x,\omega) g(\cdot-x) e^{2\pi i \omega t}\,dx\,d\omega.
    \end{align*}
    Using a function $f : \R^{2d} \to \R$, usually referred to as the \emph{symbol} or \emph{mask}, we can weigh this reconstruction so that certain frequencies and time intervals have more or less priority than others. Formally, this happens via the \emph{localization operator}
    \begin{align}\label{eq:loc_op_def}
        A_f^g : \psi \mapsto \int_{\R^{2d}} f(x,\omega) V_g\psi(x,\omega) g(\cdot-x) e^{2\pi i \omega t}\,dx\,d\omega.
    \end{align}
    Such operators have applications in signal analysis \cite{Olivero2010, Kreme2021, Rajbamshi2019, Strohmer2006}, acoustics \cite{Hazrati2013, Wang2005, Chi2012}, pseudo-differential operators \cite{Hrmander1979, Grchenig2011}, physics \cite{Berezin1971, deGosson2011} and operator theory \cite{Luef2021, Grchenig2011, Fulsche2020} among others and their properties have been extensively studied \cite{CORDERO2007, Wong2002, Fernndez2006, Cordero2006}. Abreu and Dörfler \cite{Abreu2012} first considered the inverse problem of recovering the symbol $f$ from the localization operator $A_f^g$ through various measurements related to $A_f^g$ and this work has been continued in \cite{Abreu2012, Abreu2015, Romero2022, Luef2019_acc}. All these investigations have been focused on the case where $f$ is a \emph{binary mask}, i.e., $f : \R^{2d} \to \{ 0, 1 \}$. The main contribution of this paper is showing corresponding results for more general classes of $f$ as well as developing novel approaches to recovering $f$ which have not been considered before.
    
    There are several reasons to consider the case of non-binary symbols: If we view the inverse problem as a calibration, it is reasonable that imperfections in the system may cause the corresponding symbol to deviate from an intended binary design. Symbol discontinuities can also cause audible artifacts known as \emph{musical noise} in the audio setting \cite{Chi2012, Brons2012} and it is therefore beneficial to design those systems with non-binary symbols in the first place. In some audio filtering contexts where binary masks are currently used such as in \cite{Hazrati2013}, a non-binary value is associated to each time-frequency coordinate and a mask is then constructed by thresholding. This approach, while straight-forward, is unlikely to be optimal which has motivated the use of non-binary masks in similar systems \cite{Chi2012}. Audio processing systems may also include components such as preemphasis which cannot be represented using binary time-frequency masks. Moreover, localization operators can be identified with function-operator convolutions from quantum harmonic analysis \cite{Luef2018} and Gabor-Toeplitz operators \cite{Luef2021} and inverting the symbol to operator mapping is of independent theoretical interest in these settings. As localization operators with a fixed window function are dense in the class of trace-class operators \cite{Bayer2015, Luef2018}, a general operator can be completely described by its symbol which is unlikely to be binary-valued, hence non-binary symbol recovery provides a suitable setting for a general mapping from operators to functions.
    
    Below we state all of our main results before having established all the relevant notation. In particular, we formulate some results with function-operator convolutions $f \star S$, Wigner distributions $W(\varphi)$, the Feichtinger algebra $M^1(\R^d)$ and Cohen's class distributions $Q_S(\psi)$. These are all detailed and properly defined in Section \ref{sec:prelims} but hopefully the general idea of the theorems should be clear. If not, the reader can return to the formulations after finishing the preliminaries section.

    \subsection*{White noise}
    Our first result shows how a smooth, non-negative, real-valued symbol can be approximated by looking at the spectrograms of the images of white noise under the localization operator. Due to specifics of the approximation procedure, we can only estimate the square of the symbol, $f^2$, meaning that sign information is lost. In particular, our estimator for $f^2$, the so called \emph{average observed spectrogram}, is given by
    \begin{align}\label{eq:rho_def}
        \rho(z) = \frac{1}{K}\sum_{k=1}^K |V_\varphi(A_f^g \mathcal{N}_k)(z)|^2
    \end{align}
    where $(\mathcal{N}_k)_{k=1}^K$ are $K$ realizations of (complex) white noise and $\varphi$ is our \emph{reconstruction window} which does not necessarily have to coincide with $g$. This construction is from \cite{Romero2022} in the binary case and follows previous work on white noise approaches in time-frequency analysis which have recently received attention \cite{Abreu2022_gaf, Bardenet2021}. The notion and interpretation of white noise in this setting will be made precise in Section \ref{sec:white_noise}. We will show how, as $K \to \infty$, this estimator converges with high probability to
    \begin{align}\label{eq:vartheta}
        \vartheta(z) = \sum_m \lambda_m^2 |V_\varphi h_m(z)|^2
    \end{align}
    where $(\lambda_m)_m$ and $(h_m)_m$ are the eigenvalues and eigenfunctions of $A_f^g$, respectively. Using the framework of quantum harmonic analysis and asymptotics of products of localization operators, we will show how $\vartheta$ in turn is a good approximation of $f^2$.
    \begin{theorem}\label{theorem:L1_theorem}
        Let $f \in C^{d+2}_c(\R^{2d})$ be real-valued, $\rho$ be given by \eqref{eq:rho_def} with white noise variance $\sigma^2$, $g, \varphi \in \mathcal{S}(\R^d)$ with $\Vert g \Vert_{L^2} = \Vert \varphi \Vert_{L^2} = 1$ and define
        \begin{align*}
            B_1 &= \left[ \Vert G \Vert_{L^2}  + \left(\sum_{j=1}^{2d} \big\Vert \partial_j^{d+1} G \big\Vert_{L^2}^2 \right)^{1/2} \right],\\
            B_2 &= \left(\int_{\R^{2d}} \big|(\nabla f^2)(z)\big|\,dz\right) \left(\int_{\R^{2d}} |z| |V_\varphi g(z)|^2\,dz\right)
        \end{align*}
        where
        $$
        G(y,z) = f(y) \left( \sum_{|\alpha| = 1} \int_0^1 \partial^\alpha f(y+t(z-y))\,dt (z-y) \right) V_g g(y-z).
        $$
        Then there exists a $C > 0$ such that
        \begin{align}\label{eq:L1_PAC}
            \mathbb{P}\left( \int_{\R^{2d}} \left| \frac{\rho(z)}{\sigma^2} - f(z)^2 \right| \,dz > B + t \right) \leq \frac{\Vert f \Vert_{L^2}^2}{t\sqrt{K}} \left[ \frac{3\sqrt{\pi} }{2\sqrt{C}}\operatorname{erf}\big(\sqrt{CK}\big) + \frac{3}{C\sqrt{K}} e^{-CK } \right]
        \end{align}
        where $B = AB_1 + B_2$ and $A$ is a constant independent of $f$ and $g$.
    \end{theorem}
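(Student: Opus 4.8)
The plan is to split the total error into a deterministic bias and a random fluctuation and to control them separately. First I would compute the mean of the estimator. Expanding the white noise in the eigenbasis $(h_m)_m$ of the self-adjoint compact operator $A_f^g$ as $\mathcal{N} = \sum_m c_m h_m$ with $\mathbb{E}[c_m\overline{c_n}] = \sigma^2\delta_{mn}$, one gets $A_f^g\mathcal{N} = \sum_m\lambda_m c_m h_m$, so $V_\varphi(A_f^g\mathcal{N})(z) = \sum_m\lambda_m c_m V_\varphi h_m(z)$, and taking the expectation of the squared modulus yields $\mathbb{E}[\rho(z)] = \sigma^2\vartheta(z)$. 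Thus $\rho/\sigma^2$ is unbiased for $\vartheta$. Since $\int|\vartheta-f^2|\,dz\le B$ will be established deterministically, the triangle inequality gives the inclusion $\{\int|\rho/\sigma^2-f^2|\,dz>B+t\}\subseteq\{\int|\rho/\sigma^2-\vartheta|\,dz>t\}$, which reduces the statement to a concentration bound for $\int|\rho/\sigma^2-\vartheta|$.

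For the deterministic bias I would identify $\vartheta(z)=\langle (A_f^g)^2\pi(z)\varphi,\pi(z)\varphi\rangle = Q_{(A_f^g)^2}(\varphi)(z)$, the Cohen class distribution of $(A_f^g)^2$, and split $\vartheta-f^2$ along its two natural approximations. The first replaces $(A_f^g)^2$ by $A_{f^2}^g$; writing out the product of the two localization operators and using the exact first-order Taylor remainder $f(z)-f(y)=\sum_{|\alpha|=1}\int_0^1\partial^\alpha f(y+t(z-y))\,dt\,(z-y)$, the discrepancy is governed precisely by $G(y,z)=f(y)(f(z)-f(y))V_gg(y-z)$. Turning the natural $L^2$/Hilbert–Schmidt control of this remainder into an $L^1(\R^{2d})$ bound is where the high-order derivatives enter: combining $\Vert G\Vert_{L^2}$ with $(\sum_j\Vert\partial_j^{d+1}G\Vert_{L^2}^2)^{1/2}$ via Cauchy–Schwarz against the integrable weight $(1+|z|)^{-(d+1)}$ yields the $AB_1$ term. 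The second step bounds $\langle A_{f^2}^g\pi(z)\varphi,\pi(z)\varphi\rangle-f^2(z)$, which is the convolution of $f^2$ against the unit-mass kernel $|V_\varphi g|^2$ minus $f^2$; the standard estimate that convolution with a probability kernel approximates the identity with error at most (total variation) times (first moment) produces exactly $B_2=(\int|\nabla f^2|)(\int|z||V_\varphi g|^2)$.

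For the random fluctuation I would argue pointwise. At fixed $z$, $V_\varphi(A_f^g\mathcal{N}_k)(z)$ is a centered complex Gaussian of variance $\sigma^2\vartheta(z)$, so $|V_\varphi(A_f^g\mathcal{N}_k)(z)|^2/\sigma^2$ is exponential with mean $\vartheta(z)$, and the average $\rho(z)/\sigma^2$ over the $K$ independent realizations is Gamma distributed with mean $\vartheta(z)$ and shape $K$. Applying Markov's inequality reduces the problem to $\frac1t\int\mathbb{E}|\rho(z)/\sigma^2-\vartheta(z)|\,dz$, i.e. to the mean absolute deviation of this Gamma variable. Evaluating that deviation through its integral representation and estimating the incomplete-Gamma tail by a Gaussian with explicit remainder produces the bracketed factor containing $\operatorname{erf}(\sqrt{CK})$ and $e^{-CK}$, while the $z$-independent prefactor leaves $\int\vartheta(z)\,dz=\sum_m\lambda_m^2=\Vert A_f^g\Vert_{\mathrm{HS}}^2\le\Vert f\Vert_{L^2}^2$, supplying the $\Vert f\Vert_{L^2}^2$ in front.

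The main obstacle is the bias estimate $B_1$, namely making the replacement $(A_f^g)^2\approx A_{f^2}^g$ quantitative in $L^1$: one must extract the leading symbol $f^2$ from the twisted product of two localization operators, identify the remainder with $G$, and then upgrade the $L^2$/Hilbert–Schmidt control of that remainder to an $L^1$ bound on the covariant symbol, which forces the $(d+1)$-st derivatives and is the delicate point. By comparison, $B_2$ and the probabilistic step are comparatively routine once the convolution identity and the Gamma structure are in place.
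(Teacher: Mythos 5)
Your proposal is correct, and its skeleton is the paper's: the triangle inequality reduces the event to $\{\Vert\rho/\sigma^2-\vartheta\Vert_{L^1}>t\}$ once the two deterministic bounds are in place; the $AB_1$ term comes from the product formula $(A_f^g)^2=A_{f^2}^g+V_g^*TV_g$ with the first-order Taylor-remainder kernel $G$; the $B_2$ term comes from the variation-times-first-moment lemma (Lemma \ref{lemma:bounded_variation_approximation_error}); and the probability is handled by Markov plus Fubini together with $\Vert\vartheta\Vert_{L^1}=\sum_m\lambda_m^2=\Vert A_f^g\Vert_{\mathcal{S}^2}^2\le\Vert f\Vert_{L^2}^2$, exactly as in Lemma \ref{lemma:prob_to_int_estimate}. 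The genuine difference is the pointwise concentration step. The paper follows \cite{Romero2022} and invokes the Hanson--Wright inequality (Theorem \ref{theorem:gaussian_workhorse}) via Lemma \ref{lemma:prob_geq_t} to get the tail $3\exp(-CK\min(t^2/\vartheta(z)^2,t/\vartheta(z)))$, which it then integrates in $t$. You instead observe that at fixed $z$ the variable $V_\varphi(A_f^g\mathcal{N}_k)(z)=\sum_m\lambda_m\alpha_m V_\varphi h_m(z)$ is exactly a centered circular complex Gaussian of variance $\sigma^2\vartheta(z)$, so $\rho(z)/\sigma^2$ is Gamma with shape $K$ and mean $\vartheta(z)$. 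This is legitimate (for fixed $z$ the quadratic form is rank one in each realization, so Hanson--Wright is overkill), more elementary, and exact; its mean absolute deviation of order $\vartheta(z)/\sqrt{K}$ feeds into the same Markov--Fubini step and yields a bound of the same $\Vert f\Vert_{L^2}^2/(t\sqrt{K})$ shape. The only cosmetic caveat is that to reproduce the stated bracket with $\operatorname{erf}(\sqrt{CK})$ and $e^{-CK}$ verbatim you would estimate the Gamma tail by a Bernstein-type sub-exponential bound, recovering the paper's form; otherwise you get an equivalent constant, which is fine since $C$ is unspecified. Finally, the step you rightly flag as the main obstacle---upgrading $L^2$ control of $G$ and $\partial_j^{d+1}G$ to an $L^1$ bound on the error---is not something you need to build from your weighted Cauchy--Schwarz device: the paper cites a trace-norm criterion for integral operators due to Stinespring (Lemma \ref{lemma:stinespring_trace_bound}), then uses that $V_g$ is an isometry and the Young-type inequality of Proposition \ref{prop:op_conv_properties} \ref{item:op_op_bound} to pass from $\Vert V_g^*TV_g\Vert_{\mathcal{S}^1}$ to the $L^1$ norm of its Cohen's class distribution. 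Your sketch is the right idea behind such criteria but, as written, is a heuristic; either carry it out or cite the criterion.
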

    The above theorem should be read as \textit{``the $L^1$ estimation error is bounded by $B$ with high probability provided $K$ is large enough''}. Note that the assumptions on $f$ in the theorem are not fulfilled by binary symbols. An intermediate step in the proof of the theorem (Proposition \ref{prop:white_noise_intermediate}) shows how the average observed spectrogram still approximates $f^2$ under looser conditions on $f$ but it does not supply a unified error estimate in the way that \eqref{eq:L1_PAC} does.

    \subsection*{Weighted accumulated Cohen's class}
    Next we discuss two approaches which rely on spectral data about the localization operator. These are also stated for \emph{mixed-state} localization operators, introduced and defined in Section \ref{sec:mixed_state_loc_op} below, as this stronger result follows directly from our methods. For the motivation of these operators beyond generalization, see \cite{Luef2019, Luef2019_acc}. We also state the corresponding statement for the ``pure'' localization operators discussed above.
    \begin{theorem}\label{theorem:accumulated}
        Let $f \in L^1(\R^{2d})$ be real-valued and with bounded variation and $S, T$ be positive trace-class operators with $\tr(S) = \tr(T) = 1$. Then if $f \star S = \sum_m \lambda_m (h_m \otimes h_m)$,
        \begin{align}\label{eq:accumulated_cohen}
            \sum_{m=1}^\infty \lambda_m Q_T(h_m)(z) = f * (S \star \check{T})(z)    
        \end{align}
        and the $L^1$ error can be bounded as
        $$
        \left\Vert \sum_{m=1}^N \lambda_m Q_T(h_m) - f \right\Vert_{L^1} \leq \operatorname{Var}(f) \int_{\R^{2d}} |z|(S \star \check{T})(z)\,dz + \sum_{m=N+1}^\infty |\lambda_m|.
        $$
        In particular, if $\varphi \in L^2(\R^d)$ with $\Vert \varphi \Vert_{L^2} = 1$ and $A_f^g = \sum_m \lambda_m (h_m \otimes h_m)$, then
        $$
        \left\Vert \sum_{m=1}^N \lambda_m |V_\varphi(h_m)|^2 - f \right\Vert_{L^1} \leq \operatorname{Var}(f) \int_{\R^{2d}} |z| |V_\varphi g(z)|^2\,dz + \sum_{m=N+1}^\infty |\lambda_m|.
        $$
        
    \end{theorem}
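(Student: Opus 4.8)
The plan is to read the left-hand side of \eqref{eq:accumulated_cohen} as a single object and then invoke the associativity of the quantum harmonic analysis convolutions. Writing the Cohen's class distribution through its defining operator convolution, $Q_T(h_m) = (h_m \otimes h_m) \star \check{T}$, and unfolding the definition $(A \star B)(z) = \tr(A\, \alpha_z(\check{B}))$ together with $\check{\check{T}} = T$, we get $Q_T(h_m)(z) = \tr\big((h_m\otimes h_m)\,\alpha_z(T)\big)$. Since $f$ is real-valued and $S$ is self-adjoint, $f \star S$ is a self-adjoint trace-class operator, so the spectral decomposition $f\star S = \sum_m \lambda_m (h_m \otimes h_m)$ converges in trace norm; this justifies pulling the sum inside the trace, giving $\sum_m \lambda_m Q_T(h_m)(z) = \tr\big((f\star S)\,\alpha_z(T)\big) = \big((f\star S)\star \check{T}\big)(z)$. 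Applying associativity $(f\star S)\star \check{T} = f * (S \star \check{T})$ then yields \eqref{eq:accumulated_cohen}.

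For the error estimate I would split the difference into a bias term and a tail term,
\[
\sum_{m=1}^N \lambda_m Q_T(h_m) - f = \Big(f * (S\star\check{T}) - f\Big) - \sum_{m=N+1}^\infty \lambda_m Q_T(h_m),
\]
using the identity just proved, and bound each in $L^1$ by the triangle inequality. The tail is controlled via $\big\Vert Q_T(h_m)\big\Vert_{L^1} = 1$: since $h_m \otimes h_m$ and $T$ are positive, $Q_T(h_m)(z) = \tr((h_m\otimes h_m)\alpha_z(T)) \geq 0$, and the QHA integration formula $\int_{\R^{2d}} \tr(A\,\alpha_z(B))\,dz = \tr(A)\tr(B)$ with $\Vert h_m\Vert = 1$ and $\tr(T) = 1$ gives total mass one; hence the tail is at most $\sum_{m=N+1}^\infty |\lambda_m|$. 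By the same positivity and integration facts, $S \star \check{T}$ is a probability density, so the bias term is the difference between $f$ and an average of its translates against this density. Using the standard $L^1$ modulus-of-continuity bound for functions of bounded variation, $\Vert \tau_z f - f\Vert_{L^1} \leq |z|\operatorname{Var}(f)$, gives $\Vert f*(S\star\check{T}) - f\Vert_{L^1} \leq \operatorname{Var}(f)\int_{\R^{2d}} |z|(S\star\check{T})(z)\,dz$, and adding the two bounds produces the claimed estimate.

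Finally, the pure-state statement follows by specializing $S = g\otimes g$ and $T = \varphi \otimes \varphi$, which are positive with unit trace since $\Vert g\Vert_{L^2} = \Vert \varphi\Vert_{L^2} = 1$. Here $A_f^g = f \star (g\otimes g)$, which recovers the spectral hypothesis $A_f^g = \sum_m \lambda_m (h_m \otimes h_m)$; the Cohen's class distribution collapses to a spectrogram, $Q_{\varphi\otimes\varphi}(h_m)(z) = \tr((h_m\otimes h_m)\alpha_z(\varphi\otimes\varphi)) = |\langle h_m, \pi(z)\varphi\rangle|^2 = |V_\varphi h_m(z)|^2$; and, writing $P$ for the parity operator, a short computation with $\check{(\varphi\otimes\varphi)} = (P\varphi)\otimes (P\varphi)$ and $P^2 = \mathrm{id}$ gives $(S\star\check{T})(z) = \tr((g\otimes g)\alpha_z(\varphi\otimes\varphi)) = |V_\varphi g(z)|^2$. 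Substituting into the general estimate yields the stated bound. I expect the main obstacle to be the bias term: one must verify carefully that $S\star\check{T}$ is genuinely a probability density and then control $f*(S\star\check{T}) - f$ through the bounded-variation modulus of continuity, since the remaining steps are essentially bookkeeping with the QHA convolution identities.
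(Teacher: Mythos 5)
Your proposal is correct and follows essentially the same route as the paper: the identity $\sum_m \lambda_m Q_T(h_m) = (f\star S)\star\check{T} = f*(S\star\check{T})$ via the spectral decomposition and QHA associativity, then a triangle-inequality split into a tail term (controlled by $\Vert Q_T(h_m)\Vert_{L^1}\leq 1$) and a bias term (controlled by the bounded-variation convolution estimate, which the paper cites as Lemma \ref{lemma:bounded_variation_approximation_error} rather than re-deriving). The only cosmetic differences are that you justify $\Vert Q_T(h_m)\Vert_{L^1}=1$ by positivity and the QHA integration formula where the paper invokes the Young-type bound of Proposition \ref{prop:op_conv_properties}~\ref{item:op_op_bound}, and you unfold the trace formula where the paper works at the operator level.
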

    Again, the notation $Q_S$ for Cohen's class is explained later in Section \ref{sec:tf_cohen} but reduces down to the spectrogram when $S$ is a rank-one operator as highlighted above.

    Similar sums have previously been investigated in \cite{Abreu2015} and \cite{Luef2019_acc} as accumulated spectrograms and Cohen's class distributions without the factor $\lambda_m$ in front of the Cohen's class distribution or spectrogram. In Section \ref{sec:weighted_accumulated_cohen}, we make the argument as to why the above formulation, which we refer to as the \emph{weighted} accumulated Cohen's class or spectrogram, is preferable and leads to smaller errors.

    \subsection*{Weighted accumulated Wigner distribution}   
    For the weighted accumulated Cohen's class as well as the white noise approach, we had to use a reconstruction window $\varphi$. In the next theorem, we are able to sidestep this. Essentially the weighted accumulated Wigner distribution is the Weyl symbol of the localization operator and its properties are easy to deduce from this perspective.
    \begin{theorem}\label{theorem:fourier_deconvolution}
        Let $f \in L^1(\R^{2d})$ be real-valued with bounded variation and $S = \sum_n s_n (g_n \otimes g_n)$ a positive trace-class operator. Then if $f \star S = \sum_m \lambda_m (h_m \otimes h_m)$,
        \begin{align*}
            \sum_m \lambda_m W(h_m)(z) &= f * \sum_n s_n W(g_n)(z)
        \end{align*}
        where $W(g)$ is the Wigner distribution of $g$. In particular, if $S = g \otimes g$ so that $f \star S = A_f^g$,
        \begin{align*}
            \sum_m \lambda_m W(h_m)(z) &= f * W(g)(z).
        \end{align*}
        Moreover, if the window functions $(g_n)_n$ are in the Schwartz space, the convergence is in $L^1$ with the error bounds
        \begin{align*}
            \left\Vert \sum_m \lambda_m W(h_m) - f \right\Vert_{L^1} &\leq \Var(f) \int_{\R^{2d}} |z| \left| \sum_n s_n W(g_n)(z) \right|\,dz
        \end{align*}
        and the corresponding statement holds for the rank-one case $S = g \otimes g$.
    \end{theorem}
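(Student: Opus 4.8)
The plan is to recognize the left-hand side $\sum_m \lambda_m W(h_m)$ as the Weyl symbol of the operator $f \star S$ and to exploit that, under Weyl quantization, the function--operator convolution $f \star S$ corresponds to an ordinary convolution of Weyl symbols. Writing $\sigma_A$ for the Weyl symbol of an operator $A$, the two ingredients I would invoke from the preliminaries are: first, that the Weyl symbol of a rank-one operator $h \otimes h$ is the Wigner distribution $W(h)$; and second, the covariance of the Weyl calculus, $\sigma_{\pi(z) A \pi(z)^*} = \sigma_A(\cdot - z)$, which upon integrating against $f$ yields the intertwining relation $\sigma_{f \star S} = f * \sigma_S$. (In the paper's convention $A_f^g = f \star (g \otimes g)$, so no reflection appears.)

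With these in hand the identity is immediate. Since $f \in L^1$ and $S$ is trace-class, $f \star S$ is trace-class, so its spectral decomposition converges in trace norm; applying the Weyl-symbol map (linear and continuous into $\mathcal{S}'(\R^{2d})$) termwise gives $\sigma_{f \star S} = \sum_m \lambda_m W(h_m)$, while the same rank-one identity gives $\sigma_S = \sum_n s_n W(g_n)$. The intertwining relation then produces
\[
\sum_m \lambda_m W(h_m) = \sigma_{f\star S} = f * \sigma_S = f * \sum_n s_n W(g_n),
\]
and the rank-one case $S = g \otimes g$, where $f \star S = A_f^g$, reads $\sum_m \lambda_m W(h_m) = f * W(g)$.

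For the error estimate I would set $\mu = \sum_n s_n W(g_n)$ and use $\int_{\R^{2d}} W(g_n) = \Vert g_n \Vert_{L^2}^2$, so that $\int \mu = \tr(S)$; under the trace normalization $\tr(S) = 1$ (as in Theorem~\ref{theorem:accumulated}) this makes $\mu$ a signed kernel of total mass one. I would then write
\[
(f * \mu)(w) - f(w) = \int_{\R^{2d}} \mu(z)\,\big(f(w - z) - f(w)\big)\,dz,
\]
take $L^1$ norms, exchange the order of integration by Tonelli, and bound the inner translate difference by the bounded-variation inequality $\Vert f(\cdot - z) - f \Vert_{L^1} \leq |z|\,\Var(f)$. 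This yields exactly $\Vert f * \mu - f \Vert_{L^1} \leq \Var(f) \int_{\R^{2d}} |z|\,|\mu(z)|\,dz$, hence the stated bound, with the rank-one version being the same computation for $\mu = W(g)$.

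The step I expect to be the genuine obstacle is justifying that the Weyl-symbol series $\sum_m \lambda_m W(h_m)$ converges in $L^1$, since Weyl symbols of trace-class operators are generally only bounded and continuous rather than integrable. This is precisely where the Schwartz hypothesis on the windows $(g_n)_n$ enters: it forces $\sigma_S = \sum_n s_n W(g_n)$ to be rapidly decaying, so that $f * \sigma_S \in L^1$ for $f \in L^1$, and the identity above pins the $L^1$ limit of the series to this convolution. I would make this rigorous by first establishing the identity at the level of tempered distributions, where no convergence question arises, and then upgrading to $L^1$ convergence using the regularity of $\mu$ to control the tail of the series.
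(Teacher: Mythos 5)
Your treatment of the identity and of the error bound follows the paper's route exactly: the left-hand side is the Weyl symbol $a_{f \star S}$, the compatibility relation $a_{f \star S} = f * a_S$ together with $a_{g \otimes g} = W(g)$ and linearity gives the convolution formula, and the $L^1$ bound is the bounded-variation approximate-identity estimate (Lemma \ref{lemma:bounded_variation_approximation_error}, which you re-derive from scratch rather than cite). You also correctly spotted that the kernel $\sum_n s_n W(g_n)$ has total mass $\tr(S)$, so the stated bound implicitly presumes $\tr(S) = 1$; the paper glosses over this normalization, and noting it is to your credit.

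The genuine gap is in your final step, the $L^1$ convergence of the series $\sum_m \lambda_m W(h_m)$. You propose to deduce it from the decay of $\sigma_S = \sum_n s_n W(g_n)$: establish the identity in $\mathcal{S}'(\R^{2d})$, observe that the limit $f * \sigma_S$ lies in $L^1$, and then ``upgrade'' using the regularity of $\sigma_S$ to control the tail. This cannot work as stated. The terms of the series involve the eigenfunctions $h_m$ of $f \star S$, not the windows $g_n$; for a general $h_m \in L^2(\R^d)$ the Wigner distribution $W(h_m)$ is bounded and continuous but need not be integrable, so the partial sums need not lie in $L^1$ at all, and distributional convergence to an $L^1$ function does not imply convergence in $L^1$ norm. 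No regularity or decay of $\sigma_S$ gives direct control of $\Vert W(h_m) \Vert_{L^1}$. What is actually needed --- and what the Schwartz hypothesis is really for --- is a regularity-transfer result for eigenfunctions: the paper invokes \cite[Theorem 4.1]{Bastianoni2020} to conclude that $g_n \in \mathcal{S}(\R^d)$ forces each eigenfunction to satisfy $h_m \in M^1(\R^d)$, which is equivalent to $W(h_m) \in L^1(\R^{2d})$. That spectral regularity statement is a nontrivial external ingredient, not something recoverable from properties of the Weyl symbol of $S$, and it is the missing idea in your argument.
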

    \subsection*{Plane tiling}
    Next, we discuss an approach based on noting that adding up all the spectrograms of an orthonormal basis yields the function which is identically one in a manner that be likened to a tiling of phase space via a partition of unity. If we then apply our localization operator with symbol $f$ to each basis element, this tiling should only make a contribution proportional to the size of $f^2$. This intuition turns out to be correct and is quantified in the following theorem. 
    \begin{theorem}\label{theorem:plane_tiling}
        Let $f \in C_c^{d+2}(\R^{2d})$ be real-valued and $g,\varphi \in \mathcal{S}(\R^{2d})$ with $\Vert g \Vert_{L^2} = \Vert \varphi \Vert_{L^2} = 1$. Then for any orthonormal basis $\{ e_n \}_n$ of $L^2(\R^d)$,
        \begin{align*}
            \left\Vert \sum_n |V_\varphi(A_f^g e_n)|^2 - f^2\right\Vert_{L^1} \leq B
        \end{align*}
        where $B$ is as in Theorem \ref{theorem:L1_theorem}.
    \end{theorem}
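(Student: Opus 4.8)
The plan is to recognize that the left-hand sum is \emph{independent of the chosen orthonormal basis} $\{e_n\}_n$ and in fact equals the quantity $\vartheta$ from \eqref{eq:vartheta}; once this is established the theorem reduces to the purely deterministic estimate $\Vert \vartheta - f^2 \Vert_{L^1} \le B$ that is already obtained in the course of proving Theorem \ref{theorem:L1_theorem}.

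First I would rewrite each summand operator-theoretically. Let $P_z$ denote the rank-one operator $\psi \mapsto \langle \psi, \pi(z)\varphi\rangle\, \pi(z)\varphi$ built from the time-frequency shift $\pi(z)$ of Section \ref{sec:prelims}, so that the reproducing identity $V_\varphi \psi(z) = \langle \psi, \pi(z)\varphi\rangle$ gives
\begin{align*}
    \big|V_\varphi(A_f^g e_n)(z)\big|^2 = \langle P_z A_f^g e_n, A_f^g e_n\rangle = \big\langle (A_f^g)^* P_z A_f^g\, e_n, e_n\big\rangle .
\end{align*}
Summing over the orthonormal basis collapses the right-hand side into a trace,
\begin{align*}
    \sum_n \big|V_\varphi(A_f^g e_n)(z)\big|^2 = \tr\!\big((A_f^g)^* P_z A_f^g\big),
\end{align*}
which is manifestly independent of $\{e_n\}_n$. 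This basis-invariance of the trace is the precise content of the ``plane tiling'' heuristic.

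Next I would evaluate the trace via the spectral decomposition $A_f^g = \sum_m \lambda_m (h_m \otimes h_m)$. Cyclicity gives $\tr((A_f^g)^* P_z A_f^g) = \tr(P_z A_f^g (A_f^g)^*)$, and since $f$ is real-valued the eigenvalues $\lambda_m$ are real and $A_f^g (A_f^g)^* = \sum_m \lambda_m^2 (h_m \otimes h_m)$. Combined with $\tr\big(P_z (h_m \otimes h_m)\big) = |\langle h_m, \pi(z)\varphi\rangle|^2 = |V_\varphi h_m(z)|^2$ this yields
\begin{align*}
    \sum_n \big|V_\varphi(A_f^g e_n)(z)\big|^2 = \sum_m \lambda_m^2\, |V_\varphi h_m(z)|^2 = \vartheta(z),
\end{align*}
exactly the limit object \eqref{eq:vartheta} of the white noise method. (The same identity can be verified directly: expanding $A_f^g e_n = \sum_m \lambda_m \langle e_n, h_m\rangle h_m$ and squaring, the off-diagonal terms vanish after summing in $n$ because $\sum_n \langle e_n, h_m\rangle \overline{\langle e_n, h_{m'}\rangle} = \delta_{mm'}$, which is just Parseval's identity for $\{e_n\}_n$.)

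With the identity $\sum_n |V_\varphi(A_f^g e_n)|^2 = \vartheta$ in hand the statement is immediate, since then $\Vert \sum_n |V_\varphi(A_f^g e_n)|^2 - f^2\Vert_{L^1} = \Vert \vartheta - f^2\Vert_{L^1}$, and the latter is bounded by $B = AB_1 + B_2$ by the deterministic portion of the proof of Theorem \ref{theorem:L1_theorem} (there the white-noise randomness enters only the concentration term governed by $t$, while the bias $\Vert \vartheta - f^2\Vert_{L^1} \le B$ is established independently of the noise). The main obstacle I anticipate is purely a matter of rigor rather than ideas: justifying the interchange of the $n$-summation with $V_\varphi$ and the convergence of the trace expression. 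Here the hypotheses pull their weight---because $f \in C_c^{d+2}(\R^{2d})$ the operator $A_f^g$, and hence $(A_f^g)^2$, is trace-class, while $\varphi$ being Schwartz forces rapid decay of the coefficients $V_\varphi h_m$, so Tonelli applies to the nonnegative summands and every rearrangement is absolutely convergent.
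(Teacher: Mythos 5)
Your proposal is correct and follows essentially the same route as the paper: both proofs show that the estimator equals $\vartheta$ pointwise by exploiting basis-invariance of the sum, and then invoke the deterministic bound $\Vert \vartheta - f^2 \Vert_{L^1} \leq AB_1 + B_2$ from the proof of Theorem \ref{theorem:L1_theorem}. Your trace computation $\sum_n \langle (A_f^g)^* P_z A_f^g e_n, e_n\rangle = \tr\big(P_z (A_f^g)^2\big)$ is exactly the paper's identity $\sum_n (A_f^g e_n \otimes A_f^g e_n) \star (\varphi \otimes \varphi)\check{\,}(z) = (A_f^g)^2 \star (\varphi \otimes \varphi)\check{\,}(z)$ with the quantum-harmonic-analysis convolution notation unrolled into its defining trace.
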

    The reason the above error estimate is so similar to that in Theorem \ref{theorem:L1_theorem} is that it turns out that the plane tiling estimator is precisely the limit $\vartheta$ \eqref{eq:vartheta} which the average observed spectrogram converges to pointwise as $K\to \infty$ almost surely.
    
    \subsection*{Gabor space projection}
    The last method is fundamentally different from those above in that it estimates $f$ pointwise in a parallelizable way. Given a point $z \in \R^{2d}$, we estimate $f(z)$ as follows: Translate the window function $g$ by $z$ so that $V_g(\pi(z)g)$ is centered at $z$ and takes the value $1$ there, i.e., $V_g(\pi(z)g)(z) = 1$. Next apply the localization operator to $\pi(z) g$ so that the value of the STFT at $z$ is scaled by approximately $f(z)$.
    The reconstruction window used can differ from the original window $g$ as long as $V_\varphi g$ reaches its maximum close to $0$. As we show in Section \ref{sec:gabor_projection}, when $\varphi = g$ this procedure may be interpreted as projecting $f \cdot V_gg(\cdot-z)$ onto the Gabor space $V_g(L^2)$ which is the reason for the name.

    \begin{theorem}\label{theorem:gabor_projection}
        Let $f \in L^1(\R^{2d})$ and $g, \varphi \in L^2(\R^d)$ with $\Vert g \Vert_{L^2} = \Vert \varphi \Vert_{L^2} = 1$. Then
        \begin{align*}
            V_\varphi(A_f^g(\pi(z)\varphi))(z) = f * |V_\varphi g|^2(z)
        \end{align*}
        and consequently the estimator satisfies the error estimate
        \begin{align*}
            \big\Vert V_\varphi(A_f^g(\pi(\cdot)\varphi)) - f \big\Vert_{L^1} \leq \operatorname{Var}(f)\int_{\R^{2d}} |z| |V_\varphi g(z)|^2\,dz.
        \end{align*}
    \end{theorem}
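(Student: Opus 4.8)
The plan is to reduce the diagonal evaluation on the left-hand side to a quadratic form in the localization operator and then exploit the covariance of the short-time Fourier transform to recognize the result as a convolution. First I would recall the weak (sesquilinear) form of the localization operator,
\[
\langle A_f^g \psi, \phi\rangle = \int_{\R^{2d}} f(w)\, V_g\psi(w)\, \overline{V_g\phi(w)}\,dw,
\]
which follows directly from the synthesis definition \eqref{eq:loc_op_def} together with the identity $\langle \pi(w)g, \phi\rangle = \overline{V_g\phi(w)}$. Since $V_\varphi\psi(z) = \langle \psi, \pi(z)\varphi\rangle$, taking $\psi = \phi = \pi(z)\varphi$ collapses the estimator on the diagonal to
\[
V_\varphi(A_f^g(\pi(z)\varphi))(z) = \langle A_f^g(\pi(z)\varphi), \pi(z)\varphi\rangle = \int_{\R^{2d}} f(w)\, \big|V_g(\pi(z)\varphi)(w)\big|^2\,dw.
\]

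Next I would invoke the covariance property of the STFT, $\big|V_g(\pi(z)\varphi)(w)\big| = |V_g\varphi(w-z)|$, under which the time-frequency shift contributes only a unimodular phase that cancels in the modulus. After the change of variables $w \mapsto w+z$ and the fundamental symmetry $|V_g\varphi(w)| = |V_\varphi g(-w)|$, the integral becomes $\int_{\R^{2d}} f(z-w)\,|V_\varphi g(w)|^2\,dw = (f * |V_\varphi g|^2)(z)$, which is the first identity. The only care needed here is the phase bookkeeping and the passage between $V_g\varphi$ and $V_\varphi g$, where a stray conjugation or sign is easiest to introduce.

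For the error estimate I would first note that, by the orthogonality relations, $\int_{\R^{2d}} |V_\varphi g(z)|^2\,dz = \Vert g\Vert_{L^2}^2 \Vert \varphi\Vert_{L^2}^2 = 1$, so $|V_\varphi g|^2$ is a probability density and $f * |V_\varphi g|^2$ is a genuine averaging of $f$. Writing the difference as $\int_{\R^{2d}} \big(f(\cdot - w) - f\big)\,|V_\varphi g(w)|^2\,dw$ and applying the triangle inequality together with Fubini reduces the $L^1$ error to $\int_{\R^{2d}} |V_\varphi g(w)|^2\, \Vert f(\cdot-w) - f\Vert_{L^1}\,dw$. The key ingredient is the bounded-variation translation estimate $\Vert f(\cdot-w) - f\Vert_{L^1} \leq |w|\operatorname{Var}(f)$, which for smooth $f$ follows from the fundamental theorem of calculus along the segment joining $z-w$ to $z$ and extends to general $f$ of bounded variation by approximation; as this is the same estimate already underlying Theorems \ref{theorem:accumulated} and \ref{theorem:fourier_deconvolution}, I would phrase it as a reusable lemma rather than reprove it. Substituting then yields the claimed bound $\operatorname{Var}(f)\int_{\R^{2d}} |w|\,|V_\varphi g(w)|^2\,dw$.

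The computation is short and the main obstacle is conceptual rather than technical: recognizing that the diagonal evaluation forces $\psi = \phi = \pi(z)\varphi$ in the sesquilinear form, so that the estimator becomes $\int f(w)|V_g(\pi(z)\varphi)(w)|^2\,dw$, and then reading this off as a convolution via STFT covariance. Once the first identity is in place, the error analysis is entirely parallel to the earlier accumulated-Cohen's-class result and requires no new ideas.
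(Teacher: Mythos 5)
Your proof is correct and is essentially the paper's own argument in cleaner packaging: the paper derives exactly your sesquilinear identity by expanding $A_f^g$ as $V_g^*(f\cdot V_g(\pi(z)\varphi))$ and justifying Fubini explicitly, then uses the same STFT covariance $V_g(\pi(z)\varphi)(w)=e^{-2\pi i x\cdot\eta}V_g\varphi(w-z)$ together with $|V_g\varphi(-w)|=|V_\varphi g(w)|$ to identify $f*|V_\varphi g|^2$, and the error bound in both cases is the bounded-variation convolution estimate of Lemma \ref{lemma:bounded_variation_approximation_error} applied with blurring kernel $|V_\varphi g|^2$. Your route through the quadratic form $\langle A_f^g\pi(z)\varphi,\pi(z)\varphi\rangle$ is a mild simplification in that the integrand is a modulus squared from the outset, so the phase cancellation the paper tracks by hand never arises, but the ingredients and the error analysis are identical.
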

    Due to the pointwise nature of this method, it can be used in conjunction with the white noise or plane tiling estimator which only estimates $f^2$ to supply sign information for $f$.

    Note that the above estimator is identical to that for the weighted accumulated spectrogram \eqref{eq:accumulated_cohen}. Consequently they can be combined as the weighted accumulated spectrogram is far more susceptible to numerical instabilities as discussed later in Section \ref{sec:num_acc_spec}. Moreover, the estimator is the convolution of the spectrogram and then kernel $|V_\varphi g|^2$ and so if we know $g$ and can set $\varphi = g$, we can recover $f$ precisely by a deconvolution procedure. This is of course a very unstable procedure but we implement it numerically in Section \ref{sec:gabor_projection_details} to show that it is feasible.

    \subsubsection*{Outline}
    In Section \ref{sec:prelims}, we go through all of the required preliminaries to follow the proofs of the above theorems. Sections \ref{sec:white_noise_recovery}, \ref{sec:spectral_recovery}, \ref{sec:plane_tiling} and \ref{sec:gabor_projection} are devoted to discussing the details, proofs and appropriate considerations for all of the recovery methods outlined above. Lastly in Section \ref{sec:numerical_implementation}, all of the five methods are implemented in MATLAB using the Large Time/Frequency Analysis Toolbox \cite{ltfatnote030} with the code available on GitHub\footnote{\url{https://github.com/SimonHalvdansson/Localization-Operator-Symbol-Recovery}}. We also discuss implementation details and compare the performance of the different methods.

    \subsubsection*{Notational conventions}
    The Schatten $p$-class of operators with singular values in $\ell^p$ will be denoted by $\mathcal{S}^p$ while the larger class of bounded operators on $L^2(\R^d)$ will be denoted by $\mathcal{L}(L^2)$. The adjoint of such an operator $A$ will be denoted by $A^*$ and we will write $\check{A} = PAP$ where $P$ is the parity operator $P : f(t) \mapsto f(-t)$. For the open ball centered at $z$ with radius $r$ we will write $B_r(z)$ and for a function $f$ of several variables, we will write $\partial_j^n f$ for the $n$:th derivative in the $j$:th variable. We will also use a multiindex $\alpha = (\alpha_1, \dots, \alpha_d)$ for the derivative $\partial^\alpha f = \partial_{\alpha_1} \cdots \partial_{\alpha_d} f$ and denote by $C^n$ the set of functions $f$ for which $\partial^\alpha f$ is continuous for all $\alpha \in \N^d$ with $|\alpha| \leq n$. The associated subspace $C^n_c$ will specify those functions which have compact support. For the Schwartz functions on $\R^d$ we will write $\mathcal{S}(\R^d)$ while indicator functions of sets $\Omega$ will be denoted by $\chi_\Omega$. Inner products with no subscript will always refer to $L^2(\R^d)$ inner products so that $\langle \cdot, \cdot \rangle = \langle \cdot , \cdot \rangle_{L^2(\R^d)}$. For matrices of size $N \times M$ with entries in $\C$, we will write $\C^{N\times M}$.
    
    \section{Preliminaries}\label{sec:prelims}

    \subsection{Time-frequency analysis}
    We highlight some important facts from time-frequency analysis which we will have use for, in a compact form. For a more complete introduction the reader is referred to \cite{grochenig_book, Wong2002}.
    
    \subsubsection{Short-time Fourier transform}
    One of the main ideas underlying time-frequency analysis is that real world signals are correlated in time and frequency. This is utilized by the short-time Fourier transform (STFT) which allows us to analyze the frequency content of a signal $\psi$ around a specific time by multiplying $\psi$ by a window function $g$ which is well-localized in time. It is defined as
    \begin{align*}
        V_g \psi(z) = \langle \psi, \pi(z)g \rangle = \int_{\R^d}\psi(t) \overline{g(t-x)}e^{-2\pi i \omega t}\,dt.
    \end{align*}
    where the \emph{time-frequency shift} $\pi(z)$ is a projective unitary square integrable representation acting as $\pi(z) \psi(t) = \pi(x,\omega) \psi(t) = e^{2\pi i \omega t} \psi(t-x)$ and the point $z = (x, \omega) \in \R^{2d}$ is said to belong to \emph{phase space}. A standard result known as \emph{Moyal's formula} states that we can compute inner products of two signals using their associated short-time Fourier transforms as
    $$
    \langle V_{g_1}\psi_1, V_{g_2}\psi_2\rangle_{L^2(\R^{2d})} = \langle \psi_1, \psi_2 \rangle \overline{\langle g_1, g_2 \rangle}. 
    $$
    In particular, the mapping $V_g : L^2(\R^d) \to L^2(\R^{2d})$ is an isometry provided $\Vert g \Vert_{L^2} = 1$. As a consequence of the above relation, we have the reconstruction formula from the introduction which tells us how we can get a signal back from its STFT
    \begin{align}\label{eq:stft_reconstruction}
	\psi = \int_{\R^{2d}} V_g \psi(z) \pi(z) g\,dz.
    \end{align}
    Often in application, the square modulus $|V_g\psi|^2$ of the STFT, the \emph{spectrogram}, is used as it is real-valued and non-negative and thus represents the \emph{energy} distribution of the signal.
 
    \subsubsection{Localization operators}
    Provided we want to localize the support of a signal $\psi$ to a region of phase space, an obvious idea is to limit the reconstruction in \eqref{eq:stft_reconstruction} to some subset $\Omega \subset \R^{2d}$. By the uncertainty principle, this is impossible but works up to a small error depending on the size of $\Omega$. Generalizing this idea, we can add a weighing factor, or \emph{symbol}, $f \in L^1(\R^{2d})$ to the reconstruction \eqref{eq:stft_reconstruction} which tells us how much we want to reconstruct different parts of the phase space representation of $\psi$. Formally, we write the application of the localization operator $A_f^g$ to $\psi$ as 
    \begin{align}
        A_f^g \psi (t) = \int_{\R^{2d}} f(z) V_g \psi(z) \pi(z) g(t)\,dz.
    \end{align}
    Localization operators in the time-frequency context were originally investigated by I. Daubechies \cite{daubechies1988_loc, Daubechies1990}.

    \subsubsection{Gabor spaces}
    The range of the short-time Fourier transform with a specific window $g$ is denoted by $V_g(L^2) \subset L^2(\R^{2d})$ and called the \emph{Gabor space} associated to $g$. These spaces are reproducing kernel Hilbert spaces which fulfill
    $$
    F = F * V_g g \qquad \text{ for all }F \in V_g(L^2).
    $$
    We already mentioned that $V_g^* V_g$ is the identity but reversed composition, $V_g V_g^* : L^2(\R^{2d}) \to V_g(L^2)$ is the orthogonal projection onto the Gabor space, often denoted by $P_{V_g(L^2)}$.

    \subsubsection{Modulation spaces}
    Feichtinger's algebra $M^1(\R^d)$, originally introduced in \cite{Feichtinger1981}, is defined as the set of tempered distributions $\psi$ such that $V_g \psi \in L^1(\R^{2d})$ where $g$ is a Schwartz window function. It is a special case of the \emph{modulation spaces} \cite{Feichtinger1981, FEICHTINGER1989307} which are defined by integrability properties of short-time Fourier transforms and have the convenient property that they are independent of the window function $g$ used.

    \subsubsection{Cohen's class of time-frequency distributions}\label{sec:tf_cohen}
    There are several quadratic time-frequency distributions with properties similar to those of the spectrogram. Those that fulfill some basic desirable properties are commonly referred to as \emph{Cohen's class distributions} \cite{Cohen1989} and include the spectrogram as a special case. They can all be written as
    $$
    Q_\Phi (\psi) =  \Phi * W(\psi) 
    $$
    where $\Phi$ is a tempered distribution and $W(\psi) = W(\psi, \psi)$ is the \emph{Wigner distribution} of $\psi$, defined as
    \begin{align}\label{eq:wigner_def}
        W(\psi, \phi)(x, \omega) = \int_{\R^d}\psi(t+x/2)\overline{\phi(t-x/2)}e^{-2\pi i \omega \cdot t}\,dt.
    \end{align}

    \subsection{Quantum harmonic analysis}
    The theory of quantum harmonic analysis, first developed by Werner in \cite{Werner1984}, will play a central role in several of our main proofs. Its main components are definitions of convolutions between functions and operators and pairs of operators. For a function $f$ and two operators $T$ and $S$, these take the form
    \begin{align}\label{eq:func_op_op_op_def}
        f \star S = \int_{\R^{2d}} f(z) \pi(z) S \pi(z)^*\,dz,\qquad T \star S(z) = \tr\big(T \pi(z) \check{S} \pi(z)^*\big)
    \end{align}
    where the first integral should be interpreted as a Bochner integral and $\check{S} = PSP$. Note in particular that with this formalism, $f \star S$ is an operator while $T \star S$ is a function. As we will see below, both of these definitions satisfy versions of Young's inequality which we will make use of. However, we first compute the prototypical rank-one function-operator and operator-operator convolutions since these serve as our main motivation for using the framework of quantum harmonic analysis. We remind the reader that a rank-one operator $\psi \otimes \phi$ acts as $f \mapsto \langle f, \phi\rangle \psi$.
    \begin{example}
        The function-operator convolution $f \star (g \otimes g)$ for $f \in L^1(\R^{2d})$ and $g \in L^2(\R^d)$ is precisely the localization operator $A_f^g$. Indeed,
        \begin{align*}
            f \star (g \otimes g) \psi &= \int_{\R^{2d}} f(z) \pi(z) (g \otimes g) \pi(z)^* \psi \,dz\\
            &= \int_{\R^{2d}} f(z) \langle \pi(z)^* \psi , g \rangle \pi(z) g \,dz\\
            &= \int_{\R^{2d}} f(z) V_g \psi(z) \pi(z) g \,dz = A_f^g \psi.
        \end{align*}
    \end{example}
    \begin{example}\label{example:qha_rank_one_conv}
        The simplest case of operator-operator convolutions reduces down to the spectrogram. For $\psi, \phi \in L^2(\R^d)$, we have that
        \begin{align*}
            (\psi \otimes \psi) \star (\phi \otimes \phi)\check{\,} \,(z) &= \tr\big( (\psi \otimes \psi) \pi(z) (\phi \otimes \phi)\pi(z)^* \big)\\
            &= \sum_n \big\langle (\psi \otimes \psi) \pi(z) (\phi \otimes \phi)\pi(z)^* e_n, e_n \big\rangle\\
            &= \sum_n \langle e_n, \pi(z)\phi \rangle  \langle \pi(z)\phi, \psi \rangle \langle \psi, e_n \rangle\\
            &= |\langle \psi, \pi(z) \phi \rangle|^2 = |V_\phi \psi(z)|^2
        \end{align*}
        where $(e_n)_n$ was an arbitrary orthonormal basis used to compute the trace.
    \end{example}
    Many properties of function-operator and operator-operator convolutions are analogues of corresponding statements for classical function-function convolutions as we will see below.
    
    Just as the integral is replaced by the trace in the definition of operator-operator convolutions \eqref{eq:func_op_op_op_def}, when measuring the size of operators, we will use the Schatten $p$-norms which are defined as
    $$
    \Vert A \Vert_{\mathcal{S}^p} = \tr(|A|^p)^{1/p}
    $$
    where $|A| = \sqrt{A^* A}$ is the absolute value of $A$. These norms induce the Schatten $p$-classes of operators, the most notable of which are the trace-class operators $\mathcal{S}^1$ and the Hilbert-Schmidt operators $\mathcal{S}^2$. As these operators are compact, they have a spectral decomposition of the form
    $$
    A = \sum_n a_n (\psi_n \otimes \phi_n)
    $$
    where $(\psi_n)_n$ and $(\phi_n)_n$ are orthonormal bases and $(a_n)_n$ is in $\ell^p$ if $A \in \mathcal{S}^p$. Note in particular that if $A$ is self-adjoint, we have that $\psi_n = \phi_n$ for all $n$.
    
    Next we collect some basic properties of these convolutions, the proofs of which can be found in \cite{Luef2018}.
    \begin{proposition}\label{prop:op_conv_properties}
    	Let $f,g \in L^1(\R^{2d}),\, S \in \mathcal{S}^p,\, T \in \mathcal{S}^q$ for $1 \leq p, q \leq \infty$ with $\frac{1}{p} + \frac{1}{q} = 1$ and $R \in \mathcal{S}^1$. Then
    	\begin{enumerate}[label=(\roman*)]
    		\item \label{item:func_op_adj}$(f \star S)^* = \bar{f} \star S^*$,
    		\item \label{item:ass1} $(f \star R) \star T = f * (R \star T)$,
    		\item \label{item:ass2} $(f * g) \star S = f  \star (g \star S)$,
    		\item $\Vert f \star S \Vert_{\mathcal{S}^p} \leq \Vert f \Vert_{L^1} \Vert S \Vert_{\mathcal{S}^p}$,
    		\item \label{item:func_op_bound}$\Vert h \star R \Vert_{\mathcal{S}^p} \leq \Vert h \Vert_{L^p} \Vert R \Vert_{\mathcal{S}^1}$,
    		\item \label{item:op_op_bound}$\Vert S \star R \Vert_{L^p} \leq \Vert S \Vert_{\mathcal{S}^p} \Vert R \Vert_{\mathcal{S}^1}$.
    	\end{enumerate}
    \end{proposition}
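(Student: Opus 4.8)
The plan is to separate the six assertions into the algebraic identities (i)--(iii), which I would read off the definitions in \eqref{eq:func_op_op_op_def}, and the norm estimates (iv)--(vi), where (iv) is elementary while (v) and (vi) are genuine Young-type inequalities. A recurring simplification throughout is to reduce to rank-one operators: by the spectral decomposition every $S \in \mathcal{S}^p$ is an $\ell^p$-combination $\sum_n a_n(\psi_n \otimes \phi_n)$, and since both convolutions are linear in their operator arguments, any identity or bound checked on rank-ones extends to the general case by dominated convergence. On rank-ones the operator--operator convolution collapses to a product of short-time Fourier transforms exactly as in Example \ref{example:qha_rank_one_conv}, which makes Moyal's formula available.

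For (i) I would simply pass the adjoint through the Bochner integral: the adjoint is a conjugate-linear isometry on each Schatten class and $(\pi(z)S\pi(z)^*)^* = \pi(z)S^*\pi(z)^*$, so $(f \star S)^* = \int_{\R^{2d}} \overline{f(z)}\,\pi(z)S^*\pi(z)^*\,dz = \bar f \star S^*$. The bound (iv) is then immediate from the triangle inequality for the Bochner integral together with the unitary invariance of the Schatten norm, $\Vert \pi(z)S\pi(z)^* \Vert_{\mathcal{S}^p} = \Vert S \Vert_{\mathcal{S}^p}$.

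The associativity relations (ii) and (iii) both hinge on the \emph{covariance identity} $\pi(w)(f \star S)\pi(w)^* = (T_w f) \star S$, where $T_w f = f(\cdot - w)$; this holds because $\pi$ is a projective representation, so the cocycle phases arising from $\pi(w)\pi(z)$ and $\pi(z)^*\pi(w)^*$ cancel. For (ii) I would insert the definition of $f \star R$ into $(f\star R)\star T$ and use cyclicity of the trace to rewrite $\tr(\pi(w)R\pi(w)^*\pi(z)\check T\pi(z)^*) = (R \star T)(z-w)$, so that the $w$-integral becomes the convolution $f * (R \star T)$; relation (iii) comes out of the same covariance by expanding $(f*g)\star S$ and recognizing $f \star (g \star S)$. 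The one point requiring care is the interchange of trace and integral, which is legitimate because (iv) and the boundedness of $\check T$ make the integrand an absolutely integrable trace-class-valued map, so Fubini applies.

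The main obstacle is (v) and (vi), the Young inequalities over the full range $1 \le p \le \infty$; my plan is to prove the two endpoints and interpolate. For (vi), trace duality gives the $p=\infty$ endpoint $|(S\star R)(z)| = |\tr(S\pi(z)\check R\pi(z)^*)| \le \Vert S \Vert_{\mathcal{S}^\infty}\Vert R \Vert_{\mathcal{S}^1}$, while the $p=1$ endpoint $\Vert S \star R \Vert_{L^1} \le \Vert S \Vert_{\mathcal{S}^1}\Vert R \Vert_{\mathcal{S}^1}$ reduces on rank-ones, via Moyal's formula, to $\int_{\R^{2d}} |V_\phi \psi(z)|^2\,dz = \Vert \psi \Vert_{L^2}^2 \Vert \phi \Vert_{L^2}^2$ and extends by the triangle inequality over the spectral decompositions. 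For (v) the $p=1$ endpoint is the special case $S = R$ of (iv), and the $p=\infty$ endpoint follows from an analogous Moyal estimate, bounding $\int_{\R^{2d}} |h(z)|\,|\langle \pi(z)R\pi(z)^*\xi,\eta\rangle|\,dz \le \Vert h \Vert_{L^\infty}\Vert R \Vert_{\mathcal{S}^1}\Vert \xi \Vert_{L^2}\Vert \eta \Vert_{L^2}$ over unit vectors $\xi,\eta$. The genuinely delicate step is then the interpolation: I would realize $(h,R)\mapsto h\star R$ and $(S,R)\mapsto S\star R$ as bilinear maps and invoke complex (Stein/Riesz--Thorin) interpolation for Schatten-valued operators to fill in $1<p<\infty$, checking that the analytic family is admissible. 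With the endpoints and interpolation established, (v) and (vi) follow for all $p$.
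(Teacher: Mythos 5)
The paper does not actually prove this proposition: it is quoted as a collection of known facts, with the proofs delegated to the reference \cite{Luef2018} (which in turn builds on Werner's original work \cite{Werner1984}). So there is nothing in the paper to compare against line by line; what can be said is that your argument is correct and is essentially the standard proof found in those references. Your treatment of (i)--(iv) — passing the adjoint through the Bochner integral, the triangle inequality plus unitary invariance of the Schatten norms, and the covariance identity $\pi(w)(f\star S)\pi(w)^* = (T_wf)\star S$ combined with cyclicity of the trace (legitimate since $R \in \mathcal{S}^1$) — is exactly how these identities are established in the literature, and your endpoint-plus-interpolation scheme for (v) and (vi) is likewise the classical route, with the endpoints correctly reduced to Moyal's formula on rank-one terms of the spectral decomposition. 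One simplification worth noting: since $R$ is a \emph{fixed} element of $\mathcal{S}^1$ in both (v) and (vi), you do not need bilinear or Stein interpolation at all; the maps $h \mapsto h \star R$ and $S \mapsto S \star R$ are linear, bounded at both endpoints by $\Vert R \Vert_{\mathcal{S}^1}$, and ordinary complex interpolation for the couples $(L^1, L^\infty)$ and $(\mathcal{S}^1, \mathcal{L}(L^2))$ — which are known to interpolate to $L^p$ and $\mathcal{S}^p$ — finishes the job. This removes the only genuinely delicate admissibility check in your plan.
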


    In the next subsections, we dive deeper into some topics in quantum harmonic analysis which will be of use. For a more thorough introduction with more motivation and results, the reader is referred to \cite{Luef2018}.

    \subsubsection{Mixed-state localization operators}\label{sec:mixed_state_loc_op}
    Localization operators reconstruct a function with respect to a single window or pair of windows in the non self-adjoint case. This construction has been generalized to multiple windows by considering the function-operator convolution $f \star S$ which can be seen as a weighted sum of localization operators \cite{Luef2019}. Indeed, if $S = \sum_n s_n (g_n \otimes g_n) \in \mathcal{S}^1$, then
    $$
    f \star S = f \star \sum_n s_n (g_n \otimes g_n) = \sum_n s_n A_f^{g_n}.
    $$
    Later on in our results, we will need for our (mixed-state) localization operators to be self-adjoint. In view of Proposition \ref{prop:op_conv_properties} \ref{item:func_op_adj}, this requires the symbol $f$ to be real-valued and the window operator $S$ to be self-adjoint.

    \subsubsection{Fourier-Wigner transform}\label{sec:fourier_wigner_deconv}
    Another central tool of quantum harmonic analysis is the Fourier-Wigner transform, mapping operators to functions, defined for $S \in \mathcal{S}^1$ as
    \begin{align*}
        \mathcal{F}_W(S)(z) = e^{-\pi i x \omega} \tr(\pi(-z) S).
    \end{align*}
    Our interest in the Fourier-Wigner transform is primarily based on its convolution properties which mirror those of the classical Fourier transform. To state the relevant result, we first need to define the \emph{symplectic Fourier transform} which essentially is a rotated two-dimensional Fourier transform
    $$
    \mathcal{F}_\sigma(f)(z) = \int_{\R^{2d}} f(z') e^{-2\pi i \sigma(z,z')}\,dz'
    $$
    where $z = (x,\omega),\, z' = (x', \omega')$ and $\sigma(z,z') = \omega x' - \omega' x$ is the standard symplectic form. We can now state the result which is analogous to the classical convolution theorem.
    \begin{proposition}
    Let $f \in L^1(\R^{2d})$ and $S, T \in \mathcal{S}^1$. Then
    \begin{align*}
        \mathcal{F}_W(f \star S) &= \mathcal{F}_\sigma(f) \cdot \mathcal{F}_W(S),\\
        \mathcal{F}_\sigma(T \star S) &= \mathcal{F}_W(T) \cdot \mathcal{F}_W(S).
    \end{align*}
    \end{proposition}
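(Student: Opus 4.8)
The engine behind both identities is a single covariance property of the Fourier--Wigner transform: conjugation of an operator by a time-frequency shift is turned into modulation by a symplectic character. Concretely, I would first record the Weyl commutation relation $\pi(w)\pi(z)\pi(w)^* = e^{2\pi i \sigma(w,z)}\pi(z)$, which is a direct computation from $\pi(z)\psi(t) = e^{2\pi i \omega t}\psi(t-x)$. Combining this with the cyclicity of the trace in the definition \eqref{eq:func_op_op_op_def} of $\mathcal{F}_W$ yields
\begin{align*}
\mathcal{F}_W\big(\pi(w)S\pi(w)^*\big)(z) = e^{2\pi i \sigma(w,z)}\,\mathcal{F}_W(S)(z),
\end{align*}
and this is essentially the only structural input needed for the first identity.

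For $\mathcal{F}_W(f\star S) = \mathcal{F}_\sigma(f)\,\mathcal{F}_W(S)$ I would write $f\star S = \int_{\R^{2d}} f(w)\,\pi(w)S\pi(w)^*\,dw$ as a Bochner integral in $\mathcal{S}^1$ and then apply the bounded functional $S'\mapsto e^{-\pi i x\omega}\tr(\pi(-z)S')$, with $z=(x,\omega)$, term by term inside the integral. The covariance relation factors the integrand as $f(w)\,e^{2\pi i\sigma(w,z)}\,\mathcal{F}_W(S)(z)$, and since $\sigma(w,z)=-\sigma(z,w)$ the remaining $w$-integral is exactly $\mathcal{F}_\sigma(f)(z)$. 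Interchanging the functional with the Bochner integral is legitimate because $\mathcal{F}_W$ is continuous on $\mathcal{S}^1$ (it is built from the trace), so this half is routine bookkeeping.

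The operator--operator identity $\mathcal{F}_\sigma(T\star S) = \mathcal{F}_W(T)\,\mathcal{F}_W(S)$ is the substantive one, and is where I expect the real work. My plan is to use the spreading (integrated Schrödinger) representation, i.e.\ the inversion formula expressing $T$ and $\check S$ as superpositions $\int \pi(u)\,du$ of time-frequency shifts whose densities are essentially the Fourier--Wigner transforms of $T$ and $\check S$. Substituting these into $(T\star S)(z) = \tr\big(T\pi(z)\check S\pi(z)^*\big)$, moving the trace inside, and simplifying the products of four time-frequency shifts via the Weyl relations reduces the trace to the orthogonality relation $\tr(\pi(a)^*\pi(b)) = \delta(a-b)$. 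This collapses one pair of integrals and, after applying $\mathcal{F}_\sigma$ in $z$ and tracking the symplectic phases, produces the pointwise product $\mathcal{F}_W(T)\,\mathcal{F}_W(S)$. Alternatively, by bilinearity and the density of finite-rank operators in $\mathcal{S}^1$ one reduces to rank-one $T,S$, in which case Example \ref{example:qha_rank_one_conv} identifies the convolution with a spectrogram and the identity becomes a classical statement about ambiguity functions under the symplectic Fourier transform.

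The main obstacle throughout is analytic bookkeeping rather than a conceptual difficulty: in the second identity the orthogonality relation for time-frequency shifts is distributional, so the interchange of the trace with two integrals and the use of $\tr(\pi(a)^*\pi(b)) = \delta(a-b)$ must be justified. The cleanest way is to first establish the identity on a dense class of well-behaved operators (rank-one with Schwartz windows) and then extend by continuity, using the norm estimates of Proposition \ref{prop:op_conv_properties} together with the boundedness of $\mathcal{F}_\sigma$ and $\mathcal{F}_W$. Keeping the many symplectic phase factors consistent under the chosen sign conventions for $\sigma$ and $\pi$ is the other place where care is needed.
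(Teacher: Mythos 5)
Your proposal is correct, but there is no proof in the paper to compare it against: the paper quotes this proposition without proof, as a standard fact of quantum harmonic analysis going back to Werner \cite{Werner1984}, with proofs in the present notation available in \cite{Luef2018}. What you outline is essentially that standard argument. Your first half is complete and the sign conventions do cooperate: from $\pi(z)\psi(t)=e^{2\pi i \omega t}\psi(t-x)$ one gets $\pi(w)\pi(z)=e^{2\pi i \sigma(w,z)}\pi(z)\pi(w)$ for the paper's $\sigma(z,z')=\omega x'-\omega' x$, cyclicity of the trace then gives the covariance relation $\mathcal{F}_W\big(\pi(w)S\pi(w)^*\big)(z)=e^{2\pi i \sigma(w,z)}\mathcal{F}_W(S)(z)$, the interchange with the Bochner integral is justified because $|\tr(\pi(-z)S')|\leq \Vert S'\Vert_{\mathcal{S}^1}$, and antisymmetry $\sigma(w,z)=-\sigma(z,w)$ turns the remaining integral into $\mathcal{F}_\sigma(f)(z)$. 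For the second identity, your fallback route (rank-one plus density) is the one to carry out, and it does close: both sides are bounded bilinear maps from $\mathcal{S}^1\times\mathcal{S}^1$ to $L^\infty(\R^{2d})$, by $\Vert T\star S\Vert_{L^1}\leq \Vert T\Vert_{\mathcal{S}^1}\Vert S\Vert_{\mathcal{S}^1}$, $\Vert \mathcal{F}_\sigma h\Vert_{L^\infty}\leq \Vert h \Vert_{L^1}$ and $\Vert \mathcal{F}_W(T)\Vert_{L^\infty}\leq \Vert T\Vert_{\mathcal{S}^1}$, so finite-rank density reduces everything to rank-one terms. One detail to keep straight there: because of the parity twist in the definition $T\star S(z)=\tr\big(T\pi(z)\check{S}\pi(z)^*\big)$, the rank-one convolution is $(\psi\otimes\psi)\star(\phi\otimes\phi)=|V_{\check{\phi}}\psi|^2$ (Example \ref{example:qha_rank_one_conv} computes the convolution with the \emph{checked} operator, which is why it shows $|V_\phi\psi|^2$), and this twist is exactly what removes all reflections: $|V_{\check{\phi}}\psi|^2=W(\psi)*W(\phi)$, so applying $\mathcal{F}_\sigma$, which converts convolution to product, together with the classical identities $\mathcal{F}_\sigma(W(\psi))=A(\psi)$ and $\mathcal{F}_W(\psi\otimes\phi)=A(\psi,\phi)$ (the half-phase $e^{-\pi i x\omega}$ in $\mathcal{F}_W$ exists precisely to make this hold), yields $\mathcal{F}_\sigma(T\star S)=A(\psi)A(\phi)=\mathcal{F}_W(T)\mathcal{F}_W(S)$; for full density you need general rank-one terms $\psi_1\otimes\psi_2$, but the same computation goes through with cross-ambiguity functions. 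The spreading-representation route with $\tr(\pi(a)^*\pi(b))=\delta(a-b)$ is fine as a heuristic but, as you yourself note, is distributional; since the density argument is available and rigorous, you can simply drop it.
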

    
    \subsubsection{Weyl quantization}\label{sec:weyl_quantization}
    A \emph{quantization procedure} provides a mapping between functions and operators such as the mapping $f \mapsto f \star S$ which we invert in this paper. In time-frequency analysis and quantum harmonic analysis, we often make use of \emph{Weyl quantization} which can be defined weakly as
    $$
    \big\langle L_f \psi, \phi \big\rangle = \big\langle f, W(\phi, \psi) \big\rangle
    $$
    where we refer to the mapping $f \mapsto L_f$ as the \emph{Weyl transform}. For the inverse mapping, meaning the function associated to the operator $S$, we write $a_S$ and call it the \emph{Weyl symbol} of $S$.
    
    Weyl quantization has a particularly nice formulation in quantum harmonic analysis where it can be written as
    $$
    a_S = \mathcal{F}_\sigma(\mathcal{F}_W(S)).
    $$
    In particular, it can be shown that $a_{\psi \otimes \phi} = W(\psi, \phi)$. It also holds that Weyl quantization is compatible with the convolutions of quantum harmonic analysis in the sense that
    \begin{align}\label{eq:weyl_quantization_conv_props}
        T \star S = a_T * a_S,\qquad a_{f \star S} = f * a_S
    \end{align}
    for $T, S \in \mathcal{S}^1$ and $f \in L^1(\R^{2d})$.

    \subsubsection{Cohen's class as operator-operator convolutions}
    The class of quadratic time-frequency distributions discussed in Section \ref{sec:tf_cohen} has a convenient formulation in quantum harmonic analysis using the Weyl quantization relations \eqref{eq:weyl_quantization_conv_props} above. By letting $\check{S}$ be the Weyl quantization of the tempered distribution $\Phi$ defining $Q_\Phi$ and using that $a_{\psi \otimes \psi} = W(\psi)$, we get that
    \begin{align}\label{eq:cohen_definition}
        Q_\Phi(\psi) = (\psi \otimes \psi) \star \check{S} =: Q_S(\psi).
    \end{align}
    This point of view makes it particularly easy to deduce properties of Cohen's class distributions such as bounding $L^p$ norms or characterizing positivity.

    \subsection{Functional analytic and probabilistic aspects of white noise}\label{sec:white_noise}
    The core of our approach to symbol recovery using white noise is computing spectrograms of random noise. This is inspired by recent theoretical work in \cite{Bardenet2021, Haimi2022, Ghosh2022} and others. For further details we refer the reader to the discussion in \cite{Romero2022} as we follow their proof strategy. The two main result which we need are stated in \cite{Romero2022} and we also state them here for the sake of completeness. The first is a version of the Hanson-Wright inequality \cite{Rudelson2013, Adamczak2015}.
    \begin{theorem}[{\cite[Theorem 3.1]{Romero2022}}]\label{theorem:gaussian_workhorse}
        Let $X$ be an $m$-dimensional complex Gaussian random variable with $X \sim \mathcal{CN}(0, \Sigma)$ and $A \in \C^{m \times m}$ Hermitian. Then there exists an universal constant $C_{hw} > 0$ such that for every $t > 0$,
        \begin{align*}
            \mathbb{P}\big( |\langle AX, X \rangle - \mathbb{E}\{ \langle AX, X \rangle \}| > t \big) \leq 2\exp\left( -C_{hw}\min\left\{ \frac{t^2}{\Vert        \Sigma\Vert^2_s \Vert A \Vert_F^2}, \frac{t}{\Vert \Sigma \Vert_s \Vert A \Vert_s} \right\} \right)
        \end{align*}
        where $\Vert \cdot \Vert_{s}$ and $\Vert \cdot \Vert_{F}$ are the spectral and Frobenius norms, respectively.
    \end{theorem}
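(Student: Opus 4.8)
The plan is to reduce the statement to the classical Hanson--Wright inequality for a \emph{standard} complex Gaussian by whitening and diagonalizing, and then to prove that reduced inequality via a Bernstein-type Chernoff argument on a weighted sum of centered exponential variables. Throughout, the two norms $\Vert \cdot \Vert_s$ and $\Vert \cdot \Vert_F$ appearing in the bound are exactly the spectral and Frobenius norms of the matrix that conjugates the quadratic form.

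First I would whiten the covariance. Writing $X = \Sigma^{1/2} Z$ with $Z \sim \mathcal{CN}(0, I_m)$, the quadratic form becomes $\langle A X, X\rangle = \langle B Z, Z\rangle$ where $B = \Sigma^{1/2} A \Sigma^{1/2}$ is again Hermitian. The point of this step is that the relevant norms transfer correctly: submultiplicativity of the operator norm gives $\Vert B \Vert_s \leq \Vert \Sigma \Vert_s \Vert A \Vert_s$, while the mixed bound $\Vert \Sigma^{1/2} A \Sigma^{1/2} \Vert_F \leq \Vert \Sigma^{1/2}\Vert_s^2 \Vert A \Vert_F = \Vert \Sigma \Vert_s \Vert A \Vert_F$ handles the Frobenius norm. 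Hence it suffices to establish
\[
\mathbb{P}\big( |\langle B Z, Z\rangle - \mathbb{E}\langle B Z, Z\rangle| > t \big) \leq 2\exp\left( -c \min\left\{ \frac{t^2}{\Vert B \Vert_F^2}, \frac{t}{\Vert B \Vert_s} \right\} \right)
\]
for a standard complex Gaussian $Z$ and a universal $c > 0$, which will furnish $C_{hw}$.

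Next I would diagonalize. Since $B$ is Hermitian, $B = U D U^*$ with $U$ unitary and $D = \operatorname{diag}(\lambda_1, \dots, \lambda_m)$ real. Rotation invariance of the standard complex Gaussian gives $W = U^* Z \sim \mathcal{CN}(0, I_m)$, so that $\langle B Z, Z\rangle = \sum_j \lambda_j |W_j|^2$ with the $|W_j|^2$ independent and identically distributed. After centering we obtain $\sum_j \lambda_j (|W_j|^2 - 1)$, and here $\Vert B \Vert_F^2 = \sum_j \lambda_j^2$ while $\Vert B \Vert_s = \max_j |\lambda_j|$, so the target is literally a Bernstein inequality for a weighted sum of independent centered exponentials, since $|W_j|^2 \sim \mathrm{Exp}(1)$.

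The remaining work, which is the technical heart, is the moment generating function estimate. For $E \sim \mathrm{Exp}(1)$ one computes $\mathbb{E}\, e^{s(E-1)} = e^{-s}/(1-s)$ for $s < 1$, whose logarithm $-s - \log(1-s)$ is bounded by $s^2$ for $|s| \leq 1/2$. Scaling and using independence yields $\mathbb{E}\exp\big(s \sum_j \lambda_j (|W_j|^2 - 1)\big) \leq \exp(s^2 \Vert B \Vert_F^2)$ on the range $|s| \leq (2\Vert B \Vert_s)^{-1}$. A Chernoff bound followed by optimizing $s$ over this constrained range produces the two regimes: the unconstrained optimum $s = t/(2\Vert B \Vert_F^2)$ is admissible precisely when $t \leq \Vert B \Vert_F^2 / \Vert B \Vert_s$ and gives the sub-Gaussian tail $\exp(-t^2/(4\Vert B \Vert_F^2))$, while otherwise the boundary value $s = (2\Vert B \Vert_s)^{-1}$ gives the sub-exponential tail $\exp(-t/(4\Vert B \Vert_s))$. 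Combining the two bounds and applying the same argument to $-B$ for the lower tail yields the factor $2$ and the stated minimum. I expect the main obstacle to be the bookkeeping in this last step, namely verifying that the constrained optimization really does split at the threshold $t = \Vert B \Vert_F^2 / \Vert B \Vert_s$ and that the boundary case still dominates $t/\Vert B \Vert_s$ up to a universal constant; this is routine once the MGF bound is in hand.
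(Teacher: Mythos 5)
Your proof is correct, but note that the paper does not prove this statement at all: Theorem \ref{theorem:gaussian_workhorse} is imported verbatim from \cite[Theorem 3.1]{Romero2022}, which in turn is a covariance-weighted version of the Hanson--Wright inequality from the literature \cite{Rudelson2013, Adamczak2015}, stated in the paper ``for the sake of completeness.'' So you have supplied a self-contained proof where the paper only offers a citation. Your route is also genuinely more elementary than the general Hanson--Wright argument: the Rudelson--Vershynin-style proof must handle arbitrary sub-Gaussian coordinates and therefore needs a decoupling step to separate diagonal and off-diagonal contributions, whereas you exploit exact Gaussianity so that after whitening ($B = \Sigma^{1/2} A \Sigma^{1/2}$, with the correct norm transfers $\Vert B \Vert_s \leq \Vert \Sigma \Vert_s \Vert A \Vert_s$ and $\Vert B \Vert_F \leq \Vert \Sigma \Vert_s \Vert A \Vert_F$) and unitary diagonalization, the quadratic form collapses into an exact weighted sum $\sum_j \lambda_j |W_j|^2$ of independent $\mathrm{Exp}(1)$ variables, to which a Bernstein/Chernoff argument applies directly. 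All the individual steps check out: the MGF bound $-s - \log(1-s) \leq s^2$ on $|s| \leq 1/2$ is valid, the constrained optimization does split at $t = \Vert B \Vert_F^2 / \Vert B \Vert_s$, the boundary choice $s = (2\Vert B \Vert_s)^{-1}$ yields the sub-exponential tail $\exp(-t/(4\Vert B \Vert_s))$, and applying the argument to $-B$ gives the lower tail with the same norms, so $C_{hw} = 1/4$ works. Two small points of hygiene: make explicit that $\mathcal{CN}(0,1)$ is normalized so that $\mathbb{E}|W_j|^2 = 1$ (otherwise $|W_j|^2$ is a scaled exponential and the constant shifts), and that circular symmetry is what licenses the rotation invariance $U^* Z \sim \mathcal{CN}(0, I_m)$; both are standard under the $\mathcal{CN}$ notation but are doing real work in your reduction.
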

    
    Secondly, the next lemma gives a constructive way to deal with the application of an operator to white noise.
    \begin{lemma}[{\cite[Lemma 4.2]{Romero2022}}]\label{lemma:random_expansion}
        Let $g$ be a Schwartz function with $\Vert g \Vert_{L^2} = 1$, $f \in L^1(\R^{2d})$ and $\mathcal{N}$ a realization of complex white noise with variance $\sigma^2$. Then there exists a sequence $(\alpha_m)_m$ where $\alpha_m \sim \mathcal{CN}(0, \sigma^2)$ of independent complex normal variables such that almost surely,
        $$
        A_f^g(\mathcal{N}) = \sum_m \lambda_m \alpha_m h_m\qquad \text{where } A_f^g = \sum_m \lambda_m (h_m \otimes h_m)
        $$
        with almost sure absolute convergence in $L^2(\R^{2d})$.
    \end{lemma}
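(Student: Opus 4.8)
The plan is to realize the action of $A_f^g$ on white noise through its spectral decomposition and to read off the coefficients as white-noise pairings against the eigenbasis. First I would record the structural facts. Since $A_f^g = f \star (g \otimes g)$ with $f \in L^1$ and $\Vert g \Vert_{L^2} = 1$, Proposition \ref{prop:op_conv_properties} \ref{item:func_op_bound} gives $\Vert A_f^g \Vert_{\mathcal{S}^1} \leq \Vert f \Vert_{L^1} \Vert g \otimes g \Vert_{\mathcal{S}^1} = \Vert f \Vert_{L^1} < \infty$, so $A_f^g$ is trace class and in particular $(\lambda_m)_m \in \ell^1$. Taking the given decomposition $A_f^g = \sum_m \lambda_m (h_m \otimes h_m)$ with $(h_m)_m$ orthonormal, I would set $\alpha_m := \langle \mathcal{N}, h_m \rangle$, interpreting this as the white-noise pairing. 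The defining covariance $\mathbb{E}[\langle \mathcal{N}, \phi \rangle \overline{\langle \mathcal{N}, \psi \rangle}] = \sigma^2 \langle \phi, \psi \rangle$ together with the circular symmetry $\mathbb{E}[\langle \mathcal{N}, \phi \rangle \langle \mathcal{N}, \psi \rangle] = 0$, applied to $(h_m)_m$, yields $\mathbb{E}[\alpha_m \overline{\alpha_n}] = \sigma^2 \delta_{mn}$ and $\mathbb{E}[\alpha_m \alpha_n] = 0$. Being jointly Gaussian and uncorrelated in both senses, the $\alpha_m$ are independent with $\alpha_m \sim \mathcal{CN}(0, \sigma^2)$, exactly the family required.

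The substance of the argument is justifying the identity $A_f^g \mathcal{N} = \sum_m \lambda_m \alpha_m h_m$. Formally, since $(h_m \otimes h_m)\psi = \langle \psi, h_m \rangle h_m$, one expects $A_f^g \mathcal{N} = \sum_m \lambda_m \langle \mathcal{N}, h_m \rangle h_m$, but $\mathcal{N}$ is almost surely not an element of $L^2(\R^d)$, so the pairings $\langle \mathcal{N}, h_m \rangle$ must be read distributionally and the interchange of $A_f^g$ with the (non-$L^2$-convergent) expansion of the noise must be earned. I would earn it using the smoothing nature of $A_f^g$: being trace class it is Hilbert--Schmidt, with kernel $k(s,t) = \sum_m \lambda_m h_m(s) \overline{h_m(t)} \in L^2(\R^d \times \R^d)$, so the Gaussian field $(A_f^g \mathcal{N})(s) = \langle \mathcal{N}, \overline{k(s,\cdot)} \rangle$ is well defined for almost every $s$. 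Expanding $k(s,\cdot)$ in the eigenbasis and tracking conjugates collapses this pairing to $\sum_m \lambda_m \alpha_m h_m(s)$ with the same $\alpha_m$ as above; equivalently, one expands $\mathcal{N}$ along $(h_m)_m$ (completed through the kernel of $A_f^g$, whose contribution is annihilated) and applies the trace-class operator term by term.

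Finally I would establish almost sure absolute convergence in $L^2(\R^d)$. Since $\Vert h_m \Vert_{L^2} = 1$, absolute convergence amounts to $\sum_m |\lambda_m| |\alpha_m| < \infty$ almost surely. By Tonelli and the identical distribution of the $\alpha_m$,
\[
\mathbb{E}\left[ \sum_m |\lambda_m| |\alpha_m| \right] = \mathbb{E}|\alpha_1| \sum_m |\lambda_m| = \mathbb{E}|\alpha_1| \, \Vert A_f^g \Vert_{\mathcal{S}^1} < \infty,
\]
using $(\lambda_m)_m \in \ell^1$ from the first step and the finiteness of $\mathbb{E}|\alpha_1|$ for a complex Gaussian. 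A non-negative random variable with finite mean is finite almost surely, which gives the claim. The main obstacle is the middle step: white noise is genuinely a tempered distribution rather than an $L^2$ vector, so the clean series representation holds only because $A_f^g$ is sufficiently smoothing, and the care lies in upgrading the formal eigenbasis manipulation into a rigorous statement about the Gaussian field $A_f^g \mathcal{N}$.
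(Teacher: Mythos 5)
The paper does not prove this lemma at all: it is imported verbatim from \cite[Lemma 4.2]{Romero2022} ``for the sake of completeness,'' so there is no internal proof to compare against. Your argument is a correct, self-contained reconstruction of the standard proof and matches the route taken in the cited reference: trace-class membership of $A_f^g$ via Proposition \ref{prop:op_conv_properties} gives $(\lambda_m)_m \in \ell^1$; the white-noise pairings $\alpha_m = \langle \mathcal{N}, h_m\rangle$ are i.i.d.\ $\mathcal{CN}(0,\sigma^2)$ by the covariance and pseudo-covariance structure; the identity $A_f^g\mathcal{N} = \sum_m \lambda_m \alpha_m h_m$ is earned through the Hilbert--Schmidt kernel, which is exactly how one makes sense of applying the operator to a non-$L^2$ realization; and the Tonelli argument $\mathbb{E}\bigl[\sum_m |\lambda_m||\alpha_m|\bigr] = \mathbb{E}|\alpha_1| \, \Vert A_f^g\Vert_{\mathcal{S}^1} < \infty$ cleanly delivers almost sure absolute convergence (the convergence naturally lands in $L^2(\R^d)$; the ``$L^2(\R^{2d})$'' in the statement is evidently a typo carried over in the paper). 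The one step you should make fully explicit is the collapse of $\langle \mathcal{N}, \overline{k(s,\cdot)}\rangle$ into $\sum_m \lambda_m \alpha_m h_m(s)$: the eigenfunction expansion of $k(s,\cdot)$ converges only in $L^2(\R^d)$, and you may pass the pairing through it because $\phi \mapsto \langle \mathcal{N},\phi\rangle$ is $\sigma$ times an isometry of $L^2(\R^d)$ into the Gaussian space $L^2(\Omega)$, giving convergence in $L^2(\Omega)$ for each fixed $s$; this limit then agrees almost surely with the almost surely convergent series, and a Fubini argument upgrades the pointwise-in-$s$ agreement to agreement for almost every $s$ almost surely. You gesture at exactly this, so it is a matter of writing out the detail rather than a gap in the argument.
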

    We also remark that if our white noise is not complex but rather real valued, all results will still hold but with possibly larger constants. See \cite[Section 2.2]{Romero2022} for a discussion on this.
    
    \subsection{Approximate identities}\label{sec:approx_id}
    The \emph{variation} of a function $f \in L^1(\R^{2d})$ is defined as
    $$
        \Var(f) = \sup \left\{ \int_{\R^{2d}} f(z) \operatorname{div}\phi(z)\,dz : \phi \in C^1_c(\R^{2d}, \R^{2d}),\, \Vert \phi\Vert_\infty \leq 1 \right\}
    $$
    and in the special case where $f \in C^1(\R^{2d})$, it can be written as
    $$
        \Var(f) = \int_{\R^{2d}} |\nabla f (z)| \,dz.
    $$
    We say that functions $f$ with $\Var(f) < \infty$ have \emph{bounded variation}. In the case where $f$ is the indicator function of some compact subset $\Omega \subset \R^{2d}$ with smooth boundary, the variation of $f$ is equal to the Haussdorff measure of the boundary $\partial \Omega$ \cite{Evans1991-hm}.
	
    In what follows, we will want to measure how much a function is changed when it is convolved with some kernel. The next lemma quantifies this using the concept of variation introduced above.
    \begin{lemma}[{\cite[Lemma 3.2]{Abreu2015}}]\label{lemma:bounded_variation_approximation_error}
        Let $\psi \in L^1(\R^{2d})$ have bounded variation and $\phi \in L^1(\R^{2d})$ with $\int_{\R^{2d}} \phi(z)\,dz = 1$, then
        $$
        \Vert \psi * \phi - \psi \Vert_{L^1} \leq \Var(\psi) \int_{\R^{2d}} |z| |\phi(z)|\,dz.
        $$
    \end{lemma}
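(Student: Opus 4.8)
The plan is to reduce the statement to a single \emph{translation estimate} for functions of bounded variation and then prove that estimate directly from the distributional definition of $\Var$. First I would use the normalization $\int_{\R^{2d}} \phi(z)\,dz = 1$ to write, for a.e.\ $x$,
$$(\psi * \phi)(x) - \psi(x) = \int_{\R^{2d}} \phi(z)\big[\psi(x - z) - \psi(x)\big]\,dz.$$
Taking $L^1$ norms in $x$ and applying Minkowski's integral inequality then gives
$$\Vert \psi * \phi - \psi \Vert_{L^1} \le \int_{\R^{2d}} |\phi(z)|\, \big\Vert \psi(\cdot - z) - \psi \big\Vert_{L^1}\,dz,$$
so the whole lemma follows once we establish the pointwise-in-$z$ bound
$$\big\Vert \psi(\cdot - z) - \psi \big\Vert_{L^1} \le |z|\,\Var(\psi). \qquad (\ast)$$

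For $(\ast)$, which is the heart of the matter, I would argue by duality against the very definition of $\Var$. Writing the $L^1$ norm as a supremum, $\big\Vert \psi(\cdot - z) - \psi \big\Vert_{L^1} = \sup \big\{ \int_{\R^{2d}} (\psi(x-z) - \psi(x)) g(x)\,dx : g \in C_c^\infty(\R^{2d}),\ \Vert g \Vert_\infty \le 1 \big\}$, a change of variables moves the translate onto the test function:
$$\int_{\R^{2d}} \big(\psi(x-z) - \psi(x)\big) g(x)\,dx = \int_{\R^{2d}} \psi(y)\big[g(y+z) - g(y)\big]\,dy.$$
The key step is then to write the increment of $g$ as an integral of a divergence: since $z$ is a constant vector, $g(y+z) - g(y) = \int_0^1 \nabla g(y+tz)\cdot z\,dt$ and, crucially, $\nabla g(y+tz)\cdot z = \operatorname{div}_y\big(g(y+tz)\,z\big)$. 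After interchanging the $dt$ and $dy$ integrations by Fubini (legitimate since $\psi \in L^1$ and $g$ is smooth with compact support), each inner integral has the form $\int_{\R^{2d}} \psi\,\operatorname{div}\Phi_t$ with $\Phi_t(y) = g(y+tz)\,z \in C_c^1(\R^{2d},\R^{2d})$ and $\Vert \Phi_t \Vert_\infty \le |z|$; applying the definition of $\Var$ rescaled by $|z|$ bounds each such integral by $|z|\Var(\psi)$. Integrating this constant bound over $t \in [0,1]$ and taking the supremum over $g$ yields $(\ast)$, and combining with the reduction above gives $\Vert \psi * \phi - \psi \Vert_{L^1} \le \Var(\psi)\int_{\R^{2d}} |z||\phi(z)|\,dz$, as claimed.

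I expect the main obstacle to be the rigorous handling of $(\ast)$ for a merely bounded-variation (not $C^1$) function $\psi$. The point of the duality formulation is precisely to apply the fundamental-theorem-of-calculus identity $g(y+z) - g(y) = \int_0^1 \nabla g(y+tz)\cdot z\,dt$ to the \emph{smooth} test function $g$ rather than to the rough function $\psi$, so that no derivative of $\psi$ is ever taken outside the distributional pairing. The two technical checks are that the supremum over $g \in C_c^\infty$ with $\Vert g\Vert_\infty \le 1$ genuinely recovers $\Vert \cdot \Vert_{L^1}$ and that Fubini applies. If one instead preferred to prove $(\ast)$ first for $\psi \in C_c^1$ (where it is immediate from the fundamental theorem of calculus and $\Var(\psi)=\int |\nabla\psi|$) and then pass to general $\psi$ by mollification, the obstacle would shift to verifying $\Var(\psi * \eta_\varepsilon) \le \Var(\psi)$ together with the $L^1$-continuity of translation; the duality argument sidesteps these approximation issues entirely.
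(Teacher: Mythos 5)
Your proof is correct and follows essentially the same route as the proof this paper relies on: the lemma is stated here without proof, citing \cite{Abreu2015}, where the argument is exactly your reduction --- write $\psi*\phi - \psi = \int_{\R^{2d}} \phi(z)\big(\psi(\cdot - z) - \psi\big)\,dz$ using $\int \phi = 1$, apply Minkowski's integral inequality, and invoke the BV translation estimate $\Vert \psi(\cdot - z) - \psi \Vert_{L^1} \leq |z|\,\Var(\psi)$. The only difference is that the cited source quotes this translation estimate from standard BV theory, whereas you prove it from the distributional definition via duality; that step is sound as written (for the real-valued $\psi$ used in this paper the supremum over $g \in C_c^\infty$ with $\Vert g \Vert_\infty \leq 1$ does recover the $L^1$ norm, and $\Phi_t = g(\cdot + tz)\,z$ is an admissible test field with $\Vert \Phi_t \Vert_\infty \leq |z|$), so your write-up is simply a self-contained version of the same argument.
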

    In the following, we will sometimes refer to $\phi$ as the \emph{blurring kernel}.

    \section{Recovery via white noise}\label{sec:white_noise_recovery}
    The idea behind symbol recovery via white noise is easy to summarize; on average, the spectrograms of white noise are constant so by looking at filtered white noise, the spectrograms reveal the characteristics of the filter. As spectrograms are inherently quadratic, this essentially limits us to real valued non-negative symbols as we can only estimate the squared modulus $|f|^2$.

    \subsection{Preliminary results}
    Before proceeding with a proof of Theorem \ref{theorem:L1_theorem}, we collect some preliminary results in the following proposition which is stated under looser conditions than Theorem \ref{theorem:L1_theorem} and details all the intermediate steps from the average observed spectrogram $\rho(z) = \frac{1}{K}\sum_{k=1}^K |V_\varphi(A_f^g \mathcal{N}_k)(z)|^2$ to $f^2$.
    \begin{proposition}\label{prop:white_noise_intermediate}    
        Let $f$ be a real-valued, bounded, integrable function with bounded derivative and bounded variation, $\rho$ the average observed spectrogram \eqref{eq:rho_def} with white noise variance $\sigma^2$ and $g, \varphi \in \mathcal{S}(\R^d)$ with $\Vert g \Vert_{L^2} = \Vert \varphi \Vert_{L^2} = 1$. Then there exists a constant $C > 1$  such that
        \begin{align}\label{eq:main_1}
            \mathbb{P}\left(\left|\frac{\rho(z)}{\sigma^2} - \vartheta(z) \right| > t\right) &\leq 3 \exp\left( -C K \min\left( \frac{t^2}{\vartheta(z)^2}, \frac{t}{\vartheta(z)} \right) \right),\\\label{eq:main_2}
            \big\Vert \vartheta - f^2 * |V_\varphi g|^2\big\Vert_{L^\infty} &\leq \Vert f \Vert_{L^\infty}\left( \sum_{|\alpha| = 1} \Vert \partial^\alpha f \Vert_{L^\infty} \right) \Vert g \Vert^4_{M^1},\\\label{eq:main_3}
            \big\Vert f^2 * |V_\varphi g|^2 - f^2 \big\Vert_{L^1} &\leq \left(\int_{\R^{2d}} \big|(\nabla f^2)(z)\big|\,dz\right) \left(\int_{\R^{2d}} |z| |V_\varphi g(z)|^2\,dz\right).
        \end{align}
    \end{proposition}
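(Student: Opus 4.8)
The three estimates are essentially independent and I would treat them in turn, the last being the easiest and the middle one the crux. Throughout I exploit that, since $f$ is real, Proposition \ref{prop:op_conv_properties}\ref{item:func_op_adj} (with the self-adjoint window $g \otimes g$) makes $A_f^g$ self-adjoint, so its spectral data $(\lambda_m, h_m)$ is genuine and $(A_f^g)^2 = \sum_m \lambda_m^2 (h_m \otimes h_m)$.

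For the concentration bound \eqref{eq:main_1}, the plan is to fix $z$ and recognise $\rho(z)/\sigma^2$ as a quadratic form in a family of i.i.d.\ complex Gaussians to which the Hanson--Wright inequality (Theorem \ref{theorem:gaussian_workhorse}) applies. By Lemma \ref{lemma:random_expansion}, each realisation satisfies $A_f^g \mathcal{N}_k = \sum_m \lambda_m \alpha_m^{(k)} h_m$ with $\alpha_m^{(k)} \sim \mathcal{CN}(0,\sigma^2)$ independent, so that $V_\varphi(A_f^g \mathcal{N}_k)(z) = \sum_m w_m \alpha_m^{(k)}$ where $w_m = \lambda_m V_\varphi h_m(z)$. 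Then $\rho(z) = \langle A X, X\rangle$, where $X = (\alpha^{(k)})_k$ has covariance $\sigma^2 I$ and $A$ is block-diagonal with $K$ copies of the rank-one Hermitian block $M_{nm} = \bar w_n w_m$, scaled by $1/K$. A direct moment computation gives $\mathbb{E}\langle AX, X\rangle = \sigma^2 \sum_m |w_m|^2 = \sigma^2 \vartheta(z)$, while, since $M$ is rank one with eigenvalue $\Vert w\Vert^2 = \vartheta(z)$, one has $\Vert A\Vert_s = \vartheta(z)/K$ and $\Vert A\Vert_F = \vartheta(z)/\sqrt K$. Feeding these together with $\Vert\Sigma\Vert_s = \sigma^2$ into Theorem \ref{theorem:gaussian_workhorse} at threshold $\sigma^2 t$ collapses both exponents to $K\min(t^2/\vartheta(z)^2, t/\vartheta(z))$, giving \eqref{eq:main_1}. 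The only real care is that $X$ is infinite-dimensional: I would prove the bound for the truncations $m \leq N$ and pass to the limit using the almost sure $L^2$ convergence in Lemma \ref{lemma:random_expansion}, which also accounts for the constant $3$ in place of $2$.

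For \eqref{eq:main_2} I would move entirely into quantum harmonic analysis. Example \ref{example:qha_rank_one_conv} rewrites $\vartheta = (A_f^g)^2 \star \check S$ with $S = \varphi \otimes \varphi$, and the same example gives $|V_\varphi g|^2 = (g \otimes g) \star \check S$; the associativity rule \ref{item:ass1} then yields $f^2 * |V_\varphi g|^2 = (f^2 \star (g \otimes g)) \star \check S = A_{f^2}^g \star \check S$. Subtracting and applying \ref{item:op_op_bound} with $\Vert \check S\Vert_{\mathcal{S}^1} = 1$ reduces everything to the operator-norm estimate
\begin{align*}
\big\Vert \vartheta - f^2 * |V_\varphi g|^2 \big\Vert_{L^\infty} \leq \big\Vert (A_f^g)^2 - A_{f^2}^g \big\Vert_{\mathcal{S}^\infty}.
\end{align*}
To control the right-hand side I would insert the reproducing identity $\langle \psi, \pi(y)g\rangle = \int \langle\psi,\pi(z)g\rangle \langle\pi(z)g,\pi(y)g\rangle\,dz$ (a restatement of the isometry of $V_g$) into $A_{f^2}^g$, so that both operators become double integrals over phase space and their difference is $\iint f(y)(f(z)-f(y))\langle\pi(z)g,\pi(y)g\rangle \langle\,\cdot\,,\pi(z)g\rangle\,\pi(y)g\,dz\,dy$. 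Its bilinear form against $V_g\psi$ and $\overline{V_g\phi}$ has kernel coinciding with $G(y,z)$ up to a unimodular factor once $f(z)-f(y)$ is written in the Taylor form appearing in the definition of $G$. A Schur test on $|G|$ then bounds the operator norm by $\Vert f\Vert_{L^\infty}\big(\sum_{|\alpha|=1}\Vert\partial^\alpha f\Vert_{L^\infty}\big)$ times a first moment $\int |w|\,|V_g g(w)|\,dw$, which for $g \in \mathcal{S} \subset M^1$ is finite and dominated by $\Vert g\Vert_{M^1}^4$ via standard modulation-space estimates, giving \eqref{eq:main_2}.

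Finally, \eqref{eq:main_3} is immediate from Lemma \ref{lemma:bounded_variation_approximation_error} applied with $\psi = f^2$ and blurring kernel $\phi = |V_\varphi g|^2$: Moyal's formula gives $\int |V_\varphi g|^2 = \Vert\varphi\Vert^2 \Vert g\Vert^2 = 1$, so the kernel has unit mass, and $\Var(f^2) = \int |\nabla f^2|$ since $f^2 \in C^1$, producing exactly the stated bound. The main obstacle is the product asymptotic in \eqref{eq:main_2}: extracting the correct kernel $G$ from $(A_f^g)^2 - A_{f^2}^g$ and running a Schur test that survives the non-integrability of $|z-y|\,|V_g g(y-z)|$ over all of phase space---resolved precisely because the Schur test only ever sees one-variable integrals---is the step where the regularity of $f$ and the $M^1$-control on $g$ are spent.
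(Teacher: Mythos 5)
Your overall architecture matches the paper's: \eqref{eq:main_3} is exactly the paper's argument (Lemma \ref{lemma:bounded_variation_approximation_error} with $\psi = f^2$, $\phi = |V_\varphi g|^2$), and for \eqref{eq:main_2} you use the same quantum harmonic analysis skeleton, namely $\vartheta = (A_f^g)^2 \star (\varphi\otimes\varphi)\check{\,}$, $f^2 * |V_\varphi g|^2 = A_{f^2}^g \star (\varphi\otimes\varphi)\check{\,}$, and Proposition \ref{prop:op_conv_properties}\ref{item:op_op_bound} to reduce to an operator-norm bound on $(A_f^g)^2 - A_{f^2}^g$. The difference is that where the paper simply cites known results, you re-derive them: for \eqref{eq:main_1} the paper invokes Lemma \ref{lemma:prob_geq_t} (deferred to Romero et al.), whereas you reconstruct that proof from Lemma \ref{lemma:random_expansion} and Hanson--Wright; your computation of the block-diagonal matrix, its spectral and Frobenius norms, and the expectation is correct and does yield the stated exponent. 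For the product formula $(A_f^g)^2 = A_{f^2}^g + \text{error}$, the paper cites Cordero--Gr\"ochenig \cite{Cordero2005, Cordero2006}, whereas you derive the error kernel directly from the reproducing identity; that derivation of the kernel is also correct and is a nice self-contained substitute in the rank-one case.

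However, the last step of your argument for \eqref{eq:main_2} has a genuine gap. Your Schur test bounds the error operator norm by
$\Vert f\Vert_{L^\infty}\bigl(\sum_{|\alpha|=1}\Vert\partial^\alpha f\Vert_{L^\infty}\bigr)\int_{\R^{2d}}|w|\,|V_gg(w)|\,dw$,
and you then claim the weighted first moment $\int |w|\,|V_gg(w)|\,dw$ is ``dominated by $\Vert g\Vert_{M^1}^4$ via standard modulation-space estimates.'' No such estimate exists: a weighted $L^1$-norm of $V_gg$ cannot be controlled by unweighted $M^1$-norms. Concretely, take $g_R = \tfrac{1}{\sqrt 2}(\phi + \phi(\cdot - R))$ with $\phi$ a normalized Gaussian; then $\Vert g_R\Vert_{M^1}$ stays bounded as $R \to \infty$ (by translation invariance of the $M^1$-norm), while the cross terms in $V_{g_R}g_R$ carry fixed mass at distance $\approx R$ from the origin, so $\int |w|\,|V_{g_R}g_R(w)|\,dw \gtrsim R \to \infty$. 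Thus your argument proves an inequality of the same shape as \eqref{eq:main_2} but with the window constant $\int |w|\,|V_gg(w)|\,dw$ (finite for $g \in \mathcal{S}$, so still a meaningful bound under the Proposition's hypotheses), not the literal right-hand side $\Vert f \Vert_{L^\infty}\bigl(\sum_{|\alpha|=1}\Vert\partial^\alpha f\Vert_{L^\infty}\bigr)\Vert g\Vert_{M^1}^4$. To get the $M^1$-form of the constant one needs the finer analysis of the cited Cordero--Gr\"ochenig bound \eqref{eq:loc_op_prod_error_Linfty} rather than a crude Schur test; either patch the statement you prove to carry the weighted moment, or invoke the citation as the paper does.
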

    For the first part of the proposition, we will need a version of \cite[Lemma 5.1]{Romero2022} with unknown variance and non-binary symbols. This requires minimal modifications to the original proof in \cite{Romero2022} and so we leave out the proof in the interest of brevity.
    \begin{lemma}\label{lemma:prob_geq_t}
        Let $f, \rho, \sigma, g, \varphi$ and $\vartheta$ be as in Proposition \ref{prop:white_noise_intermediate}. Then there exists $C > 0$ such that for every $z \in \R^{2d}$,
        $$
        \mathbb{P}\left(\left|\frac{\rho(z)}{\sigma^2} - \vartheta(z) \right| > t\right) \leq 3 \exp\left( -CK\min\left( \frac{t^2}{\vartheta(z)^2}, \frac{t}{\vartheta(z)} \right) \right).
        $$
    \end{lemma}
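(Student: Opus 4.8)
The plan is to reduce the statement to the Hanson--Wright inequality of Theorem \ref{theorem:gaussian_workhorse} by rewriting $\rho(z)$ as a single complex Gaussian quadratic form. First I would invoke Lemma \ref{lemma:random_expansion}: for each realization $\mathcal{N}_k$ there are i.i.d.\ coefficients $\alpha_m^{(k)} \sim \mathcal{CN}(0,\sigma^2)$ with $A_f^g \mathcal{N}_k = \sum_m \lambda_m \alpha_m^{(k)} h_m$. Applying the linear transform $V_\varphi$ and evaluating at $z$ gives
\[
V_\varphi(A_f^g\mathcal{N}_k)(z) = \sum_m \lambda_m V_\varphi h_m(z)\, \alpha_m^{(k)} = \langle \alpha^{(k)}, \overline{v(z)}\rangle,
\]
where $v(z) = (\lambda_m V_\varphi h_m(z))_m$. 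Hence the $k$-th observed spectrogram is the rank-one quadratic form $|V_\varphi(A_f^g\mathcal{N}_k)(z)|^2 = \langle A_z \alpha^{(k)}, \alpha^{(k)}\rangle$ with the Hermitian matrix $A_z = \overline{v(z)} \otimes \overline{v(z)}$, and averaging over $k$ yields $\rho(z) = \langle \tilde A_z X, X\rangle$, where $X = (\alpha^{(1)}, \dots, \alpha^{(K)})$ and $\tilde A_z = K^{-1}\bigoplus_{k=1}^K A_z$ is block-diagonal.

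Next I would compute the three quantities that enter Theorem \ref{theorem:gaussian_workhorse}. Since the coefficients are i.i.d.\ $\mathcal{CN}(0,\sigma^2)$, the covariance is $\Sigma = \sigma^2 I$, so $\Vert\Sigma\Vert_s = \sigma^2$ and
\[
\mathbb{E}\{\langle \tilde A_z X, X\rangle\} = \sigma^2 \tr(\tilde A_z) = \sigma^2 \tr(A_z) = \sigma^2 \Vert v(z)\Vert^2 = \sigma^2 \vartheta(z),
\]
which is exactly the centering that makes $\rho(z)/\sigma^2$ concentrate at $\vartheta(z)$. As $A_z$ is rank-one, its spectral and Frobenius norms coincide and equal $\Vert v(z)\Vert^2 = \vartheta(z)$; passing to the block-diagonal $\tilde A_z$ then gives $\Vert \tilde A_z\Vert_s = \vartheta(z)/K$ and $\Vert \tilde A_z\Vert_F = \vartheta(z)/\sqrt{K}$.

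With these in hand I would apply Theorem \ref{theorem:gaussian_workhorse} to the deviation $|\rho(z) - \sigma^2\vartheta(z)| > \sigma^2 t$. The crucial observation is that every appearance of $\sigma^2$ cancels between the numerators and the factors of $\Vert\Sigma\Vert_s$, so that
\[
\frac{(\sigma^2 t)^2}{\Vert\Sigma\Vert_s^2 \, \Vert\tilde A_z\Vert_F^2} = \frac{Kt^2}{\vartheta(z)^2}, \qquad \frac{\sigma^2 t}{\Vert\Sigma\Vert_s \, \Vert\tilde A_z\Vert_s} = \frac{Kt}{\vartheta(z)},
\]
which delivers the claimed bound with $C = C_{hw}$. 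This is precisely where the two modifications relative to \cite[Lemma 5.1]{Romero2022} enter: the unknown variance is handled by carrying $\sigma^2$ through the computation and observing that the normalization by $\sigma^2$ removes it, while the non-binary symbol never appears explicitly here since this step is phrased entirely through the eigenvalues $\lambda_m$.

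The main obstacle is that the coefficient vector $X$ is genuinely infinite-dimensional, whereas Theorem \ref{theorem:gaussian_workhorse} is stated for $A \in \C^{m\times m}$. I would therefore truncate the expansion $A_f^g = \sum_m \lambda_m (h_m\otimes h_m)$ to its first $M$ terms, apply the inequality to the resulting finite-dimensional quadratic form, and pass to the limit $M\to\infty$, using that $A_f^g$ is trace-class (so that $\vartheta(z)$ and the relevant norms converge) to control the tail. This limiting argument is what slightly degrades the constant $2$ from Hanson--Wright to the $3$ appearing in the statement, and it is the only genuinely technical point; as it is carried out in detail in \cite{Romero2022}, it can be adapted here with the obvious substitutions.
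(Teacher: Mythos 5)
Your proposal is correct and takes essentially the same route as the paper, which omits the proof and instead cites \cite[Lemma 5.1]{Romero2022}: that argument is precisely your reduction via Lemma \ref{lemma:random_expansion} to a block-diagonal Gaussian quadratic form, followed by the Hanson--Wright bound of Theorem \ref{theorem:gaussian_workhorse}, with the $\sigma^2$ factors cancelling under the normalization $\rho/\sigma^2$. Two harmless inaccuracies: because $A_z$ is rank one, $\rho(z)$ is in fact an average of $K$ i.i.d.\ scaled exponential variables, so the finite-dimensional step already yields the constant $2$, and the truncation/limit argument need not degrade it to $3$ (a.s.\ convergence plus the portmanteau lemma preserves the exponent) --- but since $2\exp(\cdot)\leq 3\exp(\cdot)$, your bound implies the stated one in any case.
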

    We can now proceed with the proof of the proposition.    
    \begin{proof}[Proof of Proposition \ref{prop:white_noise_intermediate}]
        The first estimate of the proposition is precisely Lemma \ref{lemma:prob_geq_t} as stated above.
        
        For \eqref{eq:main_2}, we first claim that
        \begin{align}\label{eq:vartheta_expression}
            \vartheta(z) = \big(A_f^g \big)^2 \star (\varphi \otimes \varphi)\check{\,}(z).
        \end{align}
        Indeed, as $A_f^g = \sum_m \lambda_m (h_m \otimes h_m) \in \mathcal{S}^1$, it follows that $\big(A_f^g\big)^2 = \sum_m \lambda_m^2 (h_m \otimes h_m)$. Hence
        \begin{equation}\label{eq:vartheta_alternative}
    		\begin{aligned}
    			\big(A_f^g\big)^2 \star (\varphi \otimes \varphi)\check{\,}(z) &= \sum_m \lambda_m^2 (h_m \otimes h_m) \star (\varphi \otimes \varphi)\check{\,}(z)\\
                &= \sum_m \lambda_m^2 |V_\varphi h_m(z)|^2 = \vartheta(z)
    		\end{aligned}
    	\end{equation}
        by Example \ref{example:qha_rank_one_conv}. To proceed from here, we need to relate the product $(A_f^g)^2$ to $A_{f^2}^g$, a problem which has been studied by Cordero, Rodino and Gröchenig among others. In our situation, their results in \cite{Cordero2005}, \cite[Theorem 4 (i)]{Cordero2006} and \cite[Lemma 5]{Cordero2006} can be summarized as follows:
        \begin{align}\label{eq:product_formula_error}
            A_f^g A_f^g = A_{f^2}^g + V_g^* T V_g
        \end{align}
        where $T$ is an integral operator with kernel $G : \R^{2d} \times \R^{2d} \to \C$ given by
        \begin{align*}
            G(y,z) = f(y) \sum_{|\alpha| = 1} \int_0^1 (1-t) \partial^\alpha f(y+t(z-y))\,dt \frac{(z-y)^\alpha}{\alpha!} \langle \pi(z) g, \pi(y)g \rangle.
        \end{align*}
        Moreover, the norm of $V_g^* T V_g$ can be bounded as
        \begin{align}\label{eq:loc_op_prod_error_Linfty}
            \Vert V_g^* T V_g \Vert_{\mathcal{L}(L^2)} \leq \Vert a \Vert_{L^\infty} \left( \sum_{|\alpha| = 1} \Vert \partial^\alpha f \Vert_{L^\infty} \right) \Vert g \Vert_{M^1}^4.
        \end{align}
        Applying this result to $\big( A_f^g \big)^2$ yields
        \begin{align*}
            \big(A_f^g\big)^2 = A_{f^2}^g + V_g^* T V_g = f^2 \star (g \otimes g) + V_g^* T V_g.
        \end{align*}
        Plugging this into \eqref{eq:vartheta_alternative} and applying Example \ref{example:qha_rank_one_conv} yields
        \begin{align*}
            \vartheta(z) &= \big(f^2 \star (g \otimes g) + V_g^* T V_g \big) \star (\varphi \otimes \varphi)\check{\,}(z)\\
            &= f^2 * |V_\varphi g|^2(z) + (V_g^* T V_g) \star (\varphi \otimes \varphi)\check{\,}(z).
        \end{align*}
        Rearranging the above and applying Proposition \ref{prop:op_conv_properties} \ref{item:op_op_bound} together with \eqref{eq:loc_op_prod_error_Linfty}, we get the estimate
        \begin{align*}
            \big\Vert \vartheta - f^2 * |V_\varphi g|^2\big\Vert_{L^\infty} &= \Vert (V_g^* T V_g) \star (\varphi \otimes \varphi)\check{\,} \Vert_{L^\infty} \leq \Vert V_g^* T V_g \Vert_{\mathcal{L}(L^2)} \Vert \varphi \Vert_{L^2}^2\\
            &\leq \Vert f \Vert_{L^\infty} \left( \sum_{|\alpha| = 1} \Vert \partial^\alpha f \Vert_{L^\infty} \right) \Vert g \Vert_{M^1}^4.
        \end{align*}
        Lastly for \eqref{eq:main_3}, applying Lemma \ref{lemma:bounded_variation_approximation_error} with $\psi = f^2$ and $\phi = |V_\varphi g|^2$ yields the desired conclusion.
    \end{proof}
    \subsection{Proof of Theorem \ref{theorem:L1_theorem}}
    For Theorem \ref{theorem:L1_theorem}, much of the machinery from the proof of Proposition \ref{prop:white_noise_intermediate} can be reused but we will need an additional estimate on the localization operator product asymptotics and a lemma turning the estimate in Lemma \ref{lemma:prob_geq_t} into an $L^1$ error which is similar to \cite[Lemma 5.4]{Romero2022}.

    As a first step, we state a simplified version of \cite[Theorem 2]{Stinespring1958} adapted to a context in which we will soon need it.
    \begin{lemma}\label{lemma:stinespring_trace_bound}
        Let $T : L^2(\R^{2d}) \to L^2(\R^{2d})$ be an integral operator with kernel $G : \R^{2d} \times \R^{2d} \to \C$ that has compact support in the first variable. Then
        $$
        \Vert T \Vert_{\mathcal{S}^1} \leq A\left[ \Vert G \Vert_{L^2}  + \left(\sum_{j=1}^{2d} \big\Vert \partial_j^{d+1} G \big\Vert_{L^2}^2 \right)^{1/2} \right]
        $$
        where the constant $A$ is independent of $G$.
    \end{lemma}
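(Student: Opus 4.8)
The plan is to estimate the trace norm by the standard device of factoring $T$ as a product of two Hilbert--Schmidt operators and invoking $\Vert T \Vert_{\mathcal{S}^1} \leq \Vert T_1 \Vert_{\mathcal{S}^2} \Vert T_2 \Vert_{\mathcal{S}^2}$ together with the isometry $\Vert R \Vert_{\mathcal{S}^2} = \Vert K_R \Vert_{L^2}$ between the Hilbert--Schmidt norm of an integral operator $R$ and the $L^2$ norm of its kernel $K_R$. The whole difficulty is then to arrange the factorization so that smoothness of $G$ in the first variable lands on one factor and the compact support in the first variable lands on the other. To this end, let $\eta \in \R^{2d}$ denote the frequency variable dual to the first variable $y$, write $\widehat{G}(\eta,z)$ for the Fourier transform of $G$ in $y$ only, and introduce the Fourier multiplier $P$ acting in the $y$ variable with symbol
$$
p(\eta) = 1 + \sum_{j=1}^{2d} |2\pi \eta_j|^{d+1}.
$$
Since $p \geq 1$, the operator $P$ is invertible with $P^{-1}$ the multiplier associated to $1/p$. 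Choosing $\chi \in C_c^\infty(\R^{2d})$ with $\chi \equiv 1$ on $\operatorname{supp}_y G$, so that $M_\chi T = T$ where $M_\chi$ is multiplication by $\chi$ in $y$, and using $P^{-1}P = I$, I would write $T = M_\chi P^{-1}(PT)$ and set $T_1 = M_\chi P^{-1}$, $T_2 = PT$.

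For the factor $T_2 = PT$, note that its kernel is $(P_y G)(y,z)$, so by the kernel--$\mathcal{S}^2$ isometry and Plancherel in the $y$ variable,
$$
\Vert PT \Vert_{\mathcal{S}^2} = \Vert P_y G \Vert_{L^2} = \big\Vert p(\eta)\widehat{G}(\eta,z) \big\Vert_{L^2_{\eta,z}} \leq \Vert G \Vert_{L^2} + \sum_{j=1}^{2d} \big\Vert \partial_j^{d+1} G \big\Vert_{L^2},
$$
where I used $\mathcal{F}(\partial_j^{d+1} G) = (2\pi i \eta_j)^{d+1}\widehat{G}$ and the triangle inequality. Passing from the $\ell^1$ sum of the derivative norms to the $\ell^2$ sum in the statement costs only a factor $\sqrt{2d}$, absorbed into the final constant, so this factor is already bounded by the bracketed quantity in the lemma.

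For the factor $T_1 = M_\chi P^{-1}$, its kernel is $\chi(y)\, q(y-y')$, where $q = \mathcal{F}^{-1}(1/p)$ is the convolution kernel of $P^{-1}$, and a direct computation of the Hilbert--Schmidt norm gives
$$
\Vert M_\chi P^{-1} \Vert_{\mathcal{S}^2}^2 = \int_{\R^{2d}} |\chi(y)|^2 \left( \int_{\R^{2d}} |q(y-y')|^2\, dy' \right) dy = \Vert \chi \Vert_{L^2}^2 \, \Vert 1/p \Vert_{L^2}^2.
$$
The crucial finiteness is $1/p \in L^2(\R^{2d})$: by equivalence of the $\ell^{d+1}$ and $\ell^2$ norms on $\R^{2d}$ one has $\sum_j |\eta_j|^{d+1} \gtrsim |\eta|^{d+1}$, hence $p(\eta) \gtrsim 1 + |\eta|^{d+1}$, and $\int_{\R^{2d}}(1+|\eta|^{d+1})^{-2}\,d\eta < \infty$ precisely because $2(d+1) > 2d$. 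Combining the two factor bounds yields the lemma with $A$ a multiple of $\Vert \chi \Vert_{L^2}\Vert 1/p \Vert_{L^2}$.

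The main obstacle, and the reason the compact-support hypothesis is indispensable, is that $P^{-1}$ by itself is bounded but translation invariant and therefore \emph{not} Hilbert--Schmidt; only after composing with the cutoff $M_\chi$ does the second factor become Hilbert--Schmidt. A subtlety worth flagging is that this makes $A$ depend on a fixed region containing $\operatorname{supp}_y G$ (in the application this is $\operatorname{supp} f$) rather than being literally universal; it is, however, independent of the particular kernel $G$ supported there, which is all that the subsequent application to Theorem \ref{theorem:L1_theorem} requires. The other mildly technical point is the norm-equivalence estimate guaranteeing $1/p \in L^2$, which is exactly where the order $d+1 > d = (2d)/2$ of the derivatives enters.
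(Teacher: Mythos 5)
Your argument is correct, but note that the paper itself offers no proof of this lemma: it is stated as a simplified version of \cite[Theorem 2]{Stinespring1958} and used as a black box, with no argument given. What you have produced is a self-contained reconstruction, and your route --- factoring $T = (M_\chi P^{-1})(PT)$, applying the Schatten--H\"older bound $\Vert T \Vert_{\mathcal{S}^1} \leq \Vert M_\chi P^{-1} \Vert_{\mathcal{S}^2} \Vert PT \Vert_{\mathcal{S}^2}$, and computing both Hilbert--Schmidt norms via the kernel isometry and Plancherel in the first variable --- is the classical factorization technique behind trace-class criteria of this kind, so in effect you have re-proved the cited theorem rather than found a shortcut around it. Each step checks out: $\Vert PT\Vert_{\mathcal{S}^2} = \Vert p\widehat{G}\Vert_{L^2}$ is controlled by the bracketed quantity up to a factor $\sqrt{2d}$ from Cauchy--Schwarz, $\Vert M_\chi P^{-1}\Vert_{\mathcal{S}^2} = \Vert\chi\Vert_{L^2}\Vert 1/p\Vert_{L^2}$, and $1/p \in L^2(\R^{2d})$ precisely because the derivative order $d+1$ exceeds half the dimension of $\R^{2d}$. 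Your closing caveat deserves emphasis rather than apology: the dependence of $A$ on a compact set containing the $y$-support of $G$ is not an artifact of your method but is forced. Placing $N$ translated, disjointly supported copies of a fixed smooth kernel side by side multiplies the left-hand side by $N$ (the summands have mutually orthogonal initial and final spaces) while the right-hand side grows only like $\sqrt{N}$, so no constant uniform over all compactly supported kernels can exist; the lemma's phrase ``independent of $G$'' (and the claim in Theorem \ref{theorem:L1_theorem} that $A$ is independent of $f$ and $g$) must therefore be read as ``uniform over kernels whose first-variable support lies in a fixed compact set,'' which is exactly what your $\Vert\chi\Vert_{L^2}$ factor expresses and all that the application in Lemma \ref{lemma:E_1S_1_bound}, where the $y$-support of $G$ sits inside $\supp f$, actually requires.
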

    Armed with this lemma, we can bound the trace norm of the $V_\varphi^* T V_\varphi$ error operator from \eqref{eq:product_formula_error} above.
    \begin{lemma}\label{lemma:E_1S_1_bound}
        Let $f \in C^{d+2}_c(\R^{2d})$ and $g \in \mathcal{S}(\R^{d})$ with $\Vert g \Vert_{L^2} = 1$, then there exists a constant $A$ independent of $f$ and $g$ such that
        $$
        \Vert V_g^* T V_g \Vert_{\mathcal{S}^1} \leq A\left[ \Vert G \Vert_{L^2}  + \left(\sum_{j=1}^{2d} \big\Vert \partial_j^{d+1} G \big\Vert_{L^2}^2 \right)^{1/2} \right] < \infty
        $$
        where
        $$
        G(y,z) = f(y) \left( \sum_{|\alpha| = 1} \int_0^1 \partial^\alpha f(y+t(z-y))\,dt (z-y) \right) V_g g(y-z).
        $$
    \end{lemma}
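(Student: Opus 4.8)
The plan is to reduce the estimate to a direct application of Lemma \ref{lemma:stinespring_trace_bound}. The starting point is the product formula \eqref{eq:product_formula_error}, which identifies $V_g^* T V_g = (A_f^g)^2 - A_{f^2}^g$ as the compression to $L^2(\R^d)$ of the integral operator $T$ on $L^2(\R^{2d})$ whose kernel is the Taylor-remainder kernel displayed after \eqref{eq:product_formula_error}. Since $V_g$ is an isometry when $\Vert g\Vert_{L^2}=1$, we have $\Vert V_g\Vert_{\mathcal{L}(L^2)} = \Vert V_g^*\Vert_{\mathcal{L}(L^2)} = 1$, so the ideal property of the trace class gives
\begin{align*}
\Vert V_g^* T V_g\Vert_{\mathcal{S}^1} \leq \Vert V_g^*\Vert_{\mathcal{L}(L^2)}\,\Vert T\Vert_{\mathcal{S}^1}\,\Vert V_g\Vert_{\mathcal{L}(L^2)} = \Vert T\Vert_{\mathcal{S}^1}.
\end{align*}
It therefore suffices to bound $\Vert T\Vert_{\mathcal{S}^1}$ by the Stinespring quantities, and the whole problem becomes a matter of putting the kernel of $T$ into the form required by Lemma \ref{lemma:stinespring_trace_bound}.

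Next I would rewrite that kernel. Using the covariance of the short-time Fourier transform, $\langle \pi(z)g, \pi(y)g\rangle$ equals a unimodular phase factor times $V_g g(y-z)$ (its modulus is exactly $|V_g g(y-z)|$), which turns the kernel of $T$ into the function $G$ of the lemma, up to that phase and up to the harmless weight $(1-t)\in[0,1]$ sitting inside the Taylor integral. Neither modification affects finiteness of the $L^2$ Sobolev quantities and both can be absorbed into the constant $A$, so that Lemma \ref{lemma:stinespring_trace_bound} applies with the stated $G$. The hypothesis of that lemma --- compact support of the kernel in the first variable --- is supplied precisely by the factor $f(y)$, since $f \in C^{d+2}_c(\R^{2d})$.

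It then remains to check that $\Vert G\Vert_{L^2}$ and $\Vert \partial_j^{d+1} G\Vert_{L^2}$ are finite, which is where the regularity and decay hypotheses enter. The factor $f(y)$ confines the $y$-support to a compact set, while $V_g g \in \mathcal{S}(\R^{2d})$ because $g \in \mathcal{S}(\R^d)$, so the factor $V_g g(y-z)$ together with the polynomial $(z-y)$ provides rapid decay and smoothness in the $z$-direction. Differentiating $G$ up to order $d+1$ lands at most one further derivative on the $\partial^\alpha f$ already present in $G$, requiring $f$ to be $C^{d+2}$ in total, which is exactly the hypothesis. Thus every term produced by the product rule is the product of a compactly supported continuous function with a Schwartz function and is square integrable, giving the finiteness claim and the displayed bound.

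The main obstacle I expect is the bookkeeping in the last two steps: verifying that differentiating the phase factor and the Taylor-integral kernel (whose integrand depends on $y$ and $z$ through $y + t(z-y)$) keeps every resulting term in $L^2$, and confirming that the unimodular phase --- whose own derivatives grow linearly in $z$ --- does no harm once it multiplies the Schwartz factor $V_g g(y-z)$. Establishing that the derivative count closes at $d+2$ and that no term escapes the compact $y$-support is the one genuinely technical point; everything else is a direct invocation of Lemma \ref{lemma:stinespring_trace_bound} together with the ideal property of $\mathcal{S}^1$.
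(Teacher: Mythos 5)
Your proof follows the same route as the paper's: reduce to bounding $\Vert T \Vert_{\mathcal{S}^1}$ via the isometry of $V_g$ and the ideal property of the trace class, apply Lemma \ref{lemma:stinespring_trace_bound} to the kernel, and deduce finiteness from the compact support of $f$ together with $V_g g \in \mathcal{S}(\R^{2d})$ (the paper cites \cite[Theorem 11.2.5]{grochenig_book} for this last fact). You are in fact more explicit than the paper about the cosmetic discrepancies between the kernel in \eqref{eq:product_formula_error} and the stated $G$ --- the unimodular phase factor and the $(1-t)$ weight --- which the paper's proof silently absorbs, so your attempt is, if anything, slightly more careful on the same path.
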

    \begin{proof}
        Since $V_g$ is an isometry and $\Vert SR \Vert_{\mathcal{S}^1} \leq \Vert S \Vert_{\mathcal{S}^1} \Vert R \Vert_{\mathcal{L}(L^2)}$ for operators $S$ and $R$, we conclude that it suffices to bound the trace norm of the integral operator $T$. The bound in the formulation follows directly upon applying Lemma \ref{lemma:stinespring_trace_bound}.
        
        The finiteness of the error bound follows from the support of $f$ being compact and that $g \in \mathcal{S}(\R^d)$ via \cite[Theorem 11.2.5]{grochenig_book}.
    \end{proof}
    Lastly we formulate the promised $L^1$-estimate, based on Lemma \ref{lemma:prob_geq_t}. Its formulation and proof is similar to that of \cite[Lemma 5.4]{Romero2022}.
    \begin{lemma}\label{lemma:prob_to_int_estimate}
        Let $f \in L^1(\R^{2d}) \cap L^\infty(\R^{2d})$. Then
        \begin{align*}
            \mathbb{P}\left( \int_{\R^{2d}} \left| \frac{\rho(z)}{\sigma^2} - \vartheta(z) \right|dz \geq \gamma \right) \leq \frac{\Vert f \Vert_{L^2}^2}{\gamma \sqrt{K}} \left[ \frac{3 \sqrt{\pi} }{2\sqrt{C}}\operatorname{erf}(\sqrt{CK}) + \frac{3}{ \sqrt{C} } e^{- \sqrt{CK} } \right].
        \end{align*}
    \end{lemma}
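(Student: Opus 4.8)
The plan is to pass from the pointwise tail estimate of Lemma~\ref{lemma:prob_geq_t} to an $L^1$ control by combining Markov's inequality with a layer-cake computation of the expected error. First I would apply Markov's inequality to the nonnegative random variable $\int_{\R^{2d}}|\rho(z)/\sigma^2-\vartheta(z)|\,dz$, which gives
\[
\mathbb{P}\left( \int_{\R^{2d}} \left| \tfrac{\rho(z)}{\sigma^2} - \vartheta(z)\right| dz \geq \gamma\right) \leq \frac{1}{\gamma}\, \mathbb{E}\!\left[\int_{\R^{2d}} \left|\tfrac{\rho(z)}{\sigma^2} - \vartheta(z)\right| dz\right].
\]
Since the integrand is nonnegative and jointly measurable, Tonelli's theorem lets me exchange expectation and integration, reducing the problem to bounding the pointwise expected error $\mathbb{E}\,|\rho(z)/\sigma^2-\vartheta(z)|$ and then integrating the resulting bound over $z$.

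For the pointwise expectation I would use the layer-cake identity $\mathbb{E}|X| = \int_0^\infty \mathbb{P}(|X|>t)\,dt$ together with the tail bound of Lemma~\ref{lemma:prob_geq_t}. Writing $v=\vartheta(z)$, the exponent $\min(t^2/v^2,\,t/v)$ switches regime at the crossover $t=v$: on $[0,v]$ the minimum is $t^2/v^2$ and on $[v,\infty)$ it is $t/v$. Splitting the $t$-integral there and substituting $u=t/v$ produces a Gaussian piece $3v\int_0^1 e^{-CKu^2}\,du$ and an exponential tail piece $3v\int_1^\infty e^{-CKu}\,du$, which evaluate to $\frac{3v\sqrt{\pi}}{2\sqrt{CK}}\operatorname{erf}(\sqrt{CK})$ and $\frac{3v}{CK}e^{-CK}$ respectively. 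Hence
\[
\mathbb{E}\left|\tfrac{\rho(z)}{\sigma^2}-\vartheta(z)\right| \leq \vartheta(z)\left[\frac{3\sqrt{\pi}}{2\sqrt{CK}}\operatorname{erf}(\sqrt{CK}) + \frac{3}{CK}e^{-CK}\right].
\]

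Integrating over $z$ factors out the bracket and leaves $\int_{\R^{2d}}\vartheta(z)\,dz$. Using $\vartheta(z)=\sum_m \lambda_m^2 |V_\varphi h_m(z)|^2$ from \eqref{eq:vartheta}, the fact that $V_\varphi$ is an isometry on $L^2$ (as $\Vert\varphi\Vert_{L^2}=1$, so $\int|V_\varphi h_m|^2 = \Vert h_m\Vert_{L^2}^2 = 1$), I obtain $\int\vartheta = \sum_m \lambda_m^2 = \Vert A_f^g\Vert_{\mathcal{S}^2}^2$. Since $A_f^g = f\star(g\otimes g)$, Proposition~\ref{prop:op_conv_properties}~\ref{item:func_op_bound} with $p=2$ gives $\Vert A_f^g\Vert_{\mathcal{S}^2}\leq \Vert f\Vert_{L^2}\Vert g\otimes g\Vert_{\mathcal{S}^1} = \Vert f\Vert_{L^2}$, so $\int\vartheta \leq \Vert f\Vert_{L^2}^2$ (this also guarantees finiteness). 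Pulling a factor $1/\sqrt{K}$ out of the bracket yields exactly the stated prefactor, and the residual term $\frac{3}{C\sqrt{K}}e^{-CK}$ is dominated by $\frac{3}{\sqrt{C}}e^{-\sqrt{CK}}$ once $CK\geq 1$, producing the form written in the statement.

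The only delicate points are bookkeeping ones rather than conceptual obstacles. The crux is to locate the crossover $t=\vartheta(z)$ so that the $\min$ splits cleanly into a Gaussian contribution (the source of the $\operatorname{erf}$) and an exponential tail contribution, and then to recognize that the spatial integral of $\vartheta$ is precisely $\Vert A_f^g\Vert_{\mathcal{S}^2}^2$, which is what converts the otherwise opaque eigenvalue sum $\sum_m\lambda_m^2$ into the clean symbol bound $\Vert f\Vert_{L^2}^2$. No integrability issue arises in applying Tonelli, since the integrand is nonnegative and the finiteness of $\int\vartheta$ follows from the Hilbert--Schmidt bound itself.
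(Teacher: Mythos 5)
Your proposal is correct and follows essentially the same route as the paper: Markov's inequality, Tonelli, the layer-cake formula combined with the tail bound of Lemma~\ref{lemma:prob_geq_t} split at the crossover $t=\vartheta(z)$, and the bound $\Vert \vartheta \Vert_{L^1} \leq \Vert A_f^g \Vert_{\mathcal{S}^2}^2 \leq \Vert f \Vert_{L^2}^2$ via Proposition~\ref{prop:op_conv_properties}. The only (harmless) deviations are that you compute $\int \vartheta$ directly from Moyal's formula rather than from the identity $\vartheta = (A_f^g)^2 \star (\varphi\otimes\varphi)\check{\,}$ with Proposition~\ref{prop:op_conv_properties}~\ref{item:op_op_bound}, and that you justify passing from the exact term $\frac{3}{CK}e^{-CK}$ to the stated $\frac{3}{\sqrt{C}}e^{-\sqrt{CK}}$ (for $CK\geq 1$) more carefully than the paper does.
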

    \begin{proof}
        By Markov's inequality applied to the random variable $\int_{\R^{2d}} \left| \frac{\rho(z)}{\sigma^2} - \vartheta(z) \right|dz$, we have
        \begin{align}\nonumber
            \mathbb{P}\left( \int_{\R^{2d}} \left| \frac{\rho(z)}{\sigma^2} - \vartheta(z) \right|\,dz \geq \gamma \right) &\leq \frac{1}{\gamma}\mathbb{E}\left\{ \int_{\R^{2d}} \left| \frac{\rho(z)}{\sigma^2} - \vartheta(z) \right|\,dz \right\}\\\nonumber
            &=\frac{1}{\gamma} \int_{\R^{2d}}\mathbb{E}\left\{ \left| \frac{\rho(z)}{\sigma^2} - \vartheta(z)\right| \right\}\,dz\\\label{eq:markov_deep}
            &=\frac{1}{\gamma} \int_{\R^{2d}} \int_{0}^\infty \mathbb{P}\left\{ \left| \frac{\rho(z)}{\sigma^2} - \vartheta(z)\right| \geq t \right\}\,dt\,dz.
        \end{align}
        Next we estimate the inner integral for each $z \in \R^{2d}$ using Lemma \ref{lemma:prob_geq_t} as
        \begin{align*}
            \int_{0}^\infty \mathbb{P}\left\{ \left| \frac{\rho(z)}{\sigma^2} - \vartheta(z)\right| \geq t \right\}\,dt &\leq 3 \int_0^\infty \exp\left( -CK \min\left( \frac{t^2}{\vartheta(z)^2}, \frac{t}{\vartheta(z)} \right) \right)\,dt\\
            &= 3 \int_0^{\vartheta(z)} \exp\left( -\frac{CK t^2}{\vartheta(z)^2} \right)\,dt + 3 \int_{\vartheta(z)}^\infty \exp\left( -\frac{CK t}{\vartheta(z)} \right)\,dt\\
            &= \vartheta(z)\left[ \frac{3\sqrt{\pi}}{2\sqrt{CK}}\operatorname{erf}\big(\sqrt{CK}\big) + \frac{3}{CK} e^{-CK} \right].
        \end{align*}
        When computing the integral of this over $\R^{2d}$ we will need to compute the $L^1$-norm of $\vartheta$. By \eqref{eq:vartheta_expression} and Proposition \ref{prop:op_conv_properties} \ref{item:op_op_bound} with $p=1$, it can be bounded as
        \begin{align*}
            \Vert \vartheta \Vert_{L^1} &= \big\Vert (A_f^g)^2 \star (\varphi \otimes \varphi) \big\Vert_{L^1}\\
            &\leq \big\Vert (A_f^g)^2 \big\Vert_{\mathcal{S}^1} \Vert \varphi \otimes \varphi \big\Vert_{\mathcal{S}^1} = \Vert A_f^g \Vert_{\mathcal{S}^2}^2 \leq \Vert f \Vert_{L^2}^2 \Vert \varphi \otimes \varphi \Vert_{\mathcal{S}^1}^2 = \Vert f \Vert_{L^2}^2
        \end{align*}
        where we used Proposition \ref{prop:op_conv_properties} \ref{item:func_op_bound} with $p=2$ for the second to last step. Plugging this back into \eqref{eq:markov_deep} yields
        \begin{align*}
            \mathbb{P}\left( \int_{\R^{2d}} \left| \frac{\rho(z)}{\sigma^2} - \vartheta(z) \right|\,dz \geq \gamma \right) &\leq\frac{1}{\gamma} \Vert \vartheta \Vert_{L^1} \left[ \frac{3 \sqrt{\pi} }{2\sqrt{CK}}\operatorname{erf}\big(\sqrt{CK}\big) + \frac{3}{ \sqrt{CK} } e^{-\sqrt{CK} } \right]\\
            &\leq \frac{\Vert f \Vert_{L^2}^2}{\gamma \sqrt{K} } \left[ \frac{3 \sqrt{\pi} }{2\sqrt{C}}\operatorname{erf}\big(\sqrt{CK}\big) + \frac{3}{C\sqrt{K} } e^{- CK } \right]
        \end{align*}
        as desired.
    \end{proof}
    We are now ready to complete the proof of Theorem \ref{theorem:L1_theorem}.
    \begin{proof}[Proof of Theorem \ref{theorem:L1_theorem}]
        We first claim that
        \begin{align*}
            \big\Vert \vartheta - f^2 * |V_\varphi g|^2 \big\Vert_{L^1} \leq A\left[ \Vert G \Vert_{L^2}  + \left(\sum_{j=1}^{2d} \Vert \partial_j^{d+1} G \Vert_{L^2}^2 \right)^{1/2} \right].
        \end{align*}
        Indeed, this follows from Lemma \ref{lemma:E_1S_1_bound} as
        \begin{align*}
            \big\Vert \vartheta - f^2 * |V_\varphi g|^2 \big\Vert_{L^1} &= \big\Vert \big(A_f^g\big)^2 \star (\varphi \otimes \varphi)\check{\,} - A_{f^2}^g \star (\varphi \otimes \varphi)\check{\,}\big\Vert_{L^1}\\
            &= \Vert (V_g^* T V_g) \star (\varphi \otimes \varphi)\check{\,} \Vert_{L^1}\\
            &\leq \Vert V_g^* T V_g \Vert_{\mathcal{S}^1} \Vert \varphi \otimes \varphi \Vert_{\mathcal{S}^1} \\
            &\leq A\left[ \Vert G \Vert_{L^2}  + \left(\sum_{j=1}^{2d} \big\Vert \partial_j^{d+1} G \big\Vert_{L^2}^2 \right)^{1/2} \right]
        \end{align*}
        where we used Proposition \ref{prop:op_conv_properties} \ref{item:op_op_bound} for the second to last step.

        We now expand the left hand side in the $\left\Vert\frac{\rho}{\sigma^2} - f^2 \right\Vert_{L^1} > A B_1 + B_2 + t$ inequality using the above and Lemma \ref{lemma:bounded_variation_approximation_error} with $\psi = f^2$ and $\phi = |V_\varphi g|^2$ to find
        \begin{align*}
            \mathbb{P}&\left( \left\Vert \frac{\rho}{\sigma^2} - f^2 \right\Vert_{L^1} > B_1 + B_2 + t \right) \\
            &\hspace{2mm}\leq \mathbb{P}\left( \left\Vert \frac{\rho}{\sigma^2} - \vartheta \right\Vert_{L^1} + \big\Vert \vartheta - f^2 * |V_\varphi g|^2 \big\Vert_{L^1} + \big\Vert f^2 * |V_\varphi g|^2 - f^2 \big\Vert_{L^1} > B_1 + B_2 + t \right)\\
            &\hspace{2mm}\leq \mathbb{P}\left( \left\Vert \frac{\rho}{\sigma^2} - \vartheta \right\Vert_{L^1} + B_1 + B_2 > B_1 + B_2 + t \right)\\
            &\hspace{2mm}= \mathbb{P}\left( \left\Vert \frac{\rho}{\sigma^2} - \vartheta \right\Vert_{L^1} > t \right) \leq \frac{\Vert f \Vert_{L^2}^2}{t \sqrt{K} } \left[ \frac{3 \sqrt{\pi} }{2\sqrt{C}}\operatorname{erf}(\sqrt{CK}) + \frac{3}{C \sqrt{K} } e^{- CK } \right]
        \end{align*}
        where we in the last step used Lemma \ref{lemma:prob_to_int_estimate}.
    \end{proof}
    
    \begin{remark}
        Both Proposition \ref{prop:white_noise_intermediate} and Theorem \ref{theorem:L1_theorem} have clear analogues in the Cohen's class case which we believe to hold true. Indeed, it is straight-forward to show that
        $$
        \mathbb{E}\left( \frac{1}{K} \sum_{k=1}^K Q_S(f \star S(\mathcal{N}_k)) \right) \xrightarrow[K \to \infty]{} \sum_{m=1}^\infty \lambda_m^2 Q_S(h_m)(z),
        $$
        but controlling the error estimates requires generalizing Lemma \ref{lemma:prob_geq_t} to the non rank-one case which is considerably more difficult.
    \end{remark}
    
    \begin{remark}
        The quantity $\int_{\R^{2d}} |z||V_\varphi g(z)|^2\,dz$ which appears in Proposition \ref{prop:white_noise_intermediate} and Theorem \ref{theorem:L1_theorem} should be seen as punishing the case $\varphi \neq g$, i.e., the reconstruction window differing from the window function $g$.
    \end{remark}

    \section{Recovery via spectral data}\label{sec:spectral_recovery}
    In this section we discuss and prove the two recovery results Theorem \ref{theorem:accumulated} and Theorem \ref{theorem:fourier_deconvolution} which are dependent on the eigenvalues and eigenfunctions of the localization operator. As we will see in Section \ref{sec:numerical_implementation}, the instability of eigenvalues and eigenfunctions can cause issues for these approaches but they still perform well.
    
    \subsection{Weighted accumulated Cohen's class}\label{sec:weighted_accumulated_cohen}
    The accumulated Cohen's class, introduced in \cite{Luef2019_acc}, is a generalization of accumulated spectrograms from \cite{Abreu2015} where it was used for symbol recovery for binary localization operators $A^g_{\chi_\Omega}$. There, a central idea was that the eigenvalues can be separated into two groups with the first $\approx \lceil|\Omega|\rceil$ being close to $1$, followed by a sharp ``plunge region'' after which the remaining eigenvalues are all close to $0$. This fact was originally proved in \cite{Feichtinger2001}. Based on this fact, the quantity
    \begin{align}\label{eq:accumulated_spectrogram}
        \sum_{m=1}^{\lceil |\Omega| \rceil} |V_g(h_m)(z)|^2 \approx \chi_\Omega(z)
    \end{align}
    was defined as the \emph{accumulated spectrogram}. Later on, \cite{Luef2019_acc} extended the concept to mixed-state localization operators $f \star S$ by replacing the spectrograms in \eqref{eq:accumulated_spectrogram} by Cohen's class distributions by approaching the proof from a quantum harmonic analysis perspective.
    
    Much of the work in these papers is focused on showing that the accumulated Cohen's class \eqref{eq:accumulated_spectrogram} is close to the quantity
    \begin{align}\label{eq:weighted_accumulated_cohen}
        \sum_m \lambda_m Q_S(h_m)(z)
    \end{align}
    by going into specifics on the decay of the eigenvalues. However, since computing the accumulated spectrogram already requires knowing the eigenfunctions, we (almost always) have exact knowledge of the eigenvalues and can bypass this approximation step and include the eigenvalues in the estimator. In this way, the error of the approximation can be decreased with no loss in performance or increase in runtime. Moreover, we do not require a priori knowledge of $|\Omega|$ to decide the number of eigenpairs to include. A consequence of this approach is that the resulting estimator also works well for non-binary localization operators whose eigenvalues do not follow the same $0-1$ dichotomy. We refer to the quantity \eqref{eq:weighted_accumulated_cohen} as the \emph{weighted} accumulated Cohen's class to highlight the addition of the eigenvalue weights.
    
    Both \cite{Abreu2015} and \cite{Luef2019_acc} restricted their attention to the case where the window $g$ or the operator window $S$ was known a priori. We lift this restriction by introducing a reconstruction window $\varphi$ or reconstruction operator window $T$ which does not have to agree with the original window $g$ or $S$ in the same way as we did for the average observed spectrogram. As we will see in the proof below, the proper estimator then instead becomes $\sum_m \lambda_m Q_T(h_m)(z)$.
    
    \begin{proof}[Proof of Theorem \ref{theorem:accumulated}]
        The key observation for the proof is that $\sum_m \lambda_m Q_T(h_m) = f * (S \star \check{T})$. To see this, expand $f \star S$ in its singular value decomposition $f \star S = \sum_m \lambda_m (h_m \otimes h_m)$ and note that
        \begin{align*}
            f * (S \star \check{T}) &= (f \star S) \star \check{T} = \left( \sum_m \lambda_m (h_m \otimes h_m) \right) \star \check{T}\\
            &= \sum_m \lambda_m (h_m \otimes h_m) \star \check{T} = \sum_m \lambda_m Q_T(h_m)
        \end{align*}
        where we used \eqref{eq:cohen_definition} for the last step. We can now compute
        \begin{align*}
            \left\Vert \sum_{m=1}^N \lambda_m Q_T(h_m) - f \right\Vert_{L^1} &\leq \left\Vert\sum_{m=1}^N \lambda_m Q_T(h_m) - \sum_{m=1}^\infty \lambda_m Q_T(h_m)\right\Vert_{L^1} + \big\Vert f * (S \star \check{T}) - f \big\Vert_{L^1}\\
            &\leq \sum_{m = N+1}^\infty |\lambda_m| \Vert Q_T(h_m)\Vert_{L^1} + \big\Vert f * (S \star \check{T}) -f \big\Vert_{L^1}\\
            &\leq \sum_{m=N+1}^\infty |\lambda_m| + \Var(f)\int_{\R^{2d}}|z|(S \star \check{T})(z)\,dz
        \end{align*}
        where we used that $\Vert Q_T(h_m)\Vert_{L^1} \leq 1$ by Proposition \ref{prop:op_conv_properties} \ref{item:op_op_bound} with $p = 1$ and the estimate in Lemma \ref{lemma:bounded_variation_approximation_error}.
    \end{proof}
    \begin{remark}
        Ideally, we would want $S \star \check{T}$ to be a Dirac delta to make the above reconstruction exact in the sense that $\sum_m \lambda_m Q_T(h_m) = f$. The closest we can get to this is in the lattice setting where such a construction is possible which is discussed in \cite[Section 6.1]{Skrettingland2020}. The error incurred from $S \star \check{T} \neq \delta_0$ is partially captured in the $\int_{\R^{2d}}|z|(S \star \check{T})(z)\,dz$ factor which simplifies to $\int_{\R^{2d}}|z||V_\varphi g(z)|^2\,dz$ in the rank-one setting which we recognize from Section \ref{sec:white_noise_recovery}. The specifics of this error were discussed and illustrated in \cite[Figure 3]{Abreu2015}.
    \end{remark}
    
    The reader familiar with \cite{Luef2019_acc} will note that we essentially followed the exact same path for the proof as in that paper without restricting ourselves to indicator functions $f = \chi_\Omega$ and allowing $T \neq S$.
    
    The recovery procedure detailed above is clearly linear and hence it is easy to see that it is continuous on $\mathcal{S}^1$. We mean this in the sense that if $I$ is the map $f \star S \mapsto f * (S \star \check{T})$ and $A \in \mathcal{S}^1$ is a perturbation, then
    \begin{align}\label{eq:accumulated_continuous}
        \big\Vert I(f \star S + \varepsilon A) - I(f \star S) \big\Vert_{L^1} = \varepsilon \Vert A \star \check{T} \Vert_{L^1} \leq \varepsilon \Vert A \Vert_{\mathcal{S}^1}
    \end{align}
    by linearity and Proposition \ref{prop:op_conv_properties} \ref{item:op_op_bound}.

    As the estimator converges to a convolution in the $N\to \infty$ case, we can attempt to perform a deconvolution procedure to recover $f$ exactly if we know the blurring kernel and its Fourier transform is zero-free. This is investigated numerically in Section \ref{sec:num_acc_spec}.
    
    \subsection{Weighted accumulated Wigner distribution}
    The approach in Theorem \ref{theorem:fourier_deconvolution} is perhaps the simplest of those detailed in this paper once framed as just computing the Weyl symbol of the localization operator and comparing with $f$ (see \eqref{eq:weyl_quantization_conv_props}). There is also no requirement for a reconstruction window in this situation as our construction only depends on the spectral data of the localization operator. Note that we again adopt the \emph{weighted} terminology to highlight the eigenvalue dependence as in Section \ref{sec:weighted_accumulated_cohen}.
    
    \begin{proof}[Proof of Theorem \ref{theorem:fourier_deconvolution}]
        We prove the full case where $S$ is a positive trace-class operator and note that the special rank-one case follows from it.
        
        As discussed in Section \ref{sec:weyl_quantization}, the Weyl symbol of the function-operator convolution $f \star S$ is given by $ f * a_S$ where $a_S$ is the Weyl symbol of $S$. By the linearity of the Weyl symbol mapping $S \mapsto a_S$, we can compute this by using the spectral decomposition of $S = \sum_n s_n (g_n \otimes g_n)$ and the fact that $a_{g \otimes g} = W(g)$:
        \begin{align*}
            \sum_m \lambda_m W(h_m) = a_{f \star S} = f * a_S = f * \sum_n s_n W(g_n).
        \end{align*}
        In order for the sum in the left-hand side to converge in $L^1$, we need for the Wigner distribution of each eigenfunction $h_m$ to be integrable. This is equivalent to $h_m \in M^1(\R^{d})$ which follows from $g_n \in \mathcal{S}(\R^d)$ by \cite[Theorem 4.1]{Bastianoni2020}.
        
        The $L^1$-error estimate now follows by applying Lemma \ref{lemma:bounded_variation_approximation_error} with $\psi = f$ and blurring kernel $\phi = \sum_n s_n W(\varphi_n)$.
    \end{proof}
    Just as in Theorem \ref{theorem:accumulated}, we can possibly deconvolve $\sum_m \lambda_m W(h_m) = f * \sum_n s_n W(g_n)$ to recover $f$ exactly provided the Fourier transform $\mathcal{F}\left(\sum_n s_n W(g_n)\right)$ is zero-free. It turns out that yields precisely the same expression for the deconvolution as if we would naively deconvolve $A_f^g = f \star (g \otimes g)$ using the Fourier-Wigner transform from Section \ref{sec:fourier_wigner_deconv}.
    
    Note that the sum $\sum_m \lambda_m W(h_m)$ is easily seen to converge pointwise by the bound $|W(h_m)(z)| \leq 2^d \Vert h_m \Vert_{L^2}^2$ while we need the extra condition on the window for $L^1$-convergence. This is why we could not formulate Theorem \ref{theorem:fourier_deconvolution} with partial sums as we did for Theorem \ref{theorem:accumulated}.
    
    The above argument can be taken another step to show that the reconstruction procedure is not stable as was the case for accumulated spectrograms as shown in \eqref{eq:accumulated_continuous}. To see that the inverse mapping $I : \mathcal{S}^1 \to L^1(\R^{2d}),\, f \star S \mapsto \sum_m \lambda_m W(h_m)$ is not continuous, fix $\psi \in L^2(\R^d) \setminus M^1(\R^d)$ and consider the perturbation operator $A = \psi \otimes \psi \in \mathcal{S}^1$ for which we have
    $$
    \Vert I(f \star S + \varepsilon A) - I(f \star S) \Vert_{L^1} = \varepsilon \Vert I(\psi \otimes \psi) \Vert_{L^1} = \varepsilon \Vert W(\psi) \Vert_{L^1} = \infty.
    $$
    In Section \ref{sec:num_acc_wig} we provide an example showing the performance of the estimator $\sum_m \lambda_m W(h_m)$ and discuss aspects of the numerical implementation.

    \section{Recovery via plane tiling}\label{sec:plane_tiling}
    Using quantum harmonic analysis, it is easy to show that the spectrograms of an orthonormal basis add up to the function which is identically $1$. Indeed, using the relation $1 \star (\varphi \otimes \varphi) = \Vert \varphi \Vert_{L^2}^2 I_{L^2}$ from \cite[Proposition 3.2 (3)]{Werner1984}, we get for a normalized $\varphi \in L^2(\R^d)$ that
    \begin{align*}
        \sum_n |V_\varphi (e_n)|^2 &= \sum_n (e_n \otimes e_n) \star (\varphi \otimes \varphi)\check{\,}\\
        &= \left( \sum_n (e_n \otimes e_n) \right) \star (\varphi \otimes \varphi)\check{\,}\\
        &= I \star (\varphi \otimes \varphi)\\
        &= 1 * (\varphi \otimes \varphi) \star (\varphi \otimes \varphi)\check{\,} = 1 * |V_\varphi \varphi|^2 = \Vert V_\varphi \varphi \Vert_{L^2}^2 = 1.
    \end{align*}
    Intuitively, we should expect that those basis elements whose spectrograms are primarily supported away from the support of $f$ should lose most of their mass when we apply $A_f^g$ to them and the rest should remain intact or be scaled by something proportional to $f$. This is the motivation for the plane tiling approach which we prove below. The proof is rather straight-forward and we are able to inherit the main error estimate from Theorem \ref{theorem:L1_theorem} as the sum approaches the same quantity $\vartheta$ from \eqref{eq:vartheta} in the $K\to \infty$ situation in that theorem.
    \begin{proof}[Proof of Theorem \ref{theorem:plane_tiling}]
        We first rework the estimator $\sum_n |V_\varphi(A_f^g e_n)(z)|^2$ into a more manageable form using the self-adjointness of $A_f^g$ and Example \ref{example:qha_rank_one_conv} as
        \begin{align*}
            \sum_n |V_\varphi(A_f^g e_n)(z)|^2 &= \sum_n \big( A_f^g e_n \otimes A_f^g e_n \big) \star (\varphi \otimes \varphi)\check{\,}(z)\\
            &= A_f^g\left(\sum_n e_n \otimes e_n\right)A_f^g \star (\varphi \otimes \varphi)\check{\,}(z)\\
            &= \big( A_f^g I A_f^g \big) \star (\varphi \otimes \varphi)\check{\,}(z)\\
            &= \big( A_f^g \big)^2 \star (\varphi \otimes \varphi)\check{\,}(z) = \vartheta(z)
        \end{align*}
        where $\vartheta$ is the same as in Section \ref{sec:white_noise_recovery}. The same analysis on the size of $\Vert \vartheta - f^2 \Vert_{L^1}$ from the proof of Theorem \ref{theorem:L1_theorem} again applies and yields the desired conclusion.
    \end{proof}
    \begin{remark}
        The above result can be extended to mixed-state localization operators as was done in Section \ref{sec:spectral_recovery} through some technical considerations. More specifically, it is possible to control the error $\Vert \sum_n Q_T((f \star S) e_n) - f^2 \Vert_{L^1}$ if $S$ and $T$ are positive rank-one operators whose spectral decomposition consists of Schwartz functions. This is done by bounding the trace norm of the error operator in the expansion $(f \star S)^2 = f^2 \star S + V_g^* T V_g$ in a similar way to how it was done in the proof of Theorem \ref{theorem:L1_theorem}.
    \end{remark}
    If one has a choice between this method and the average observed spectrogram from Section \ref{sec:white_noise_recovery}, the better method depends on the size of the support of the symbol. A good choice for the orthonormal basis are time-frequency shifted Hermite functions as these will tile out a growing circle \cite{daubechies1988_loc}. By choosing the time-frequency shift appropriately, we can thus approximate $f(z_0)$ well by $\sum_{n=1}^N |V_\varphi(A_f^g(\pi(z_0) h_n))(z)|^2$ for small $N$. This is illustrated in Section \ref{sec:plane_tiling_numerics} below.

    \section{Recovery via Gabor projection}\label{sec:gabor_projection}
    The main difference between Gabor projection and the other proposed methods is that we are estimating each pixel of $f$ in an independent way. Notably we do this without any $L^\infty$ continuity guarantees. The idea of the procedure laid out in the introduction is purely intuitive and we can strengthen this intuition by investigating the special case $\varphi = g$ in detail. In particular, this is when the procedure is a (pointwise) projection to the Gabor space $V_g(L^2)$ in the sense that
    \begin{align}\label{eq:gabor_projection_estimator}
        V_g(A_f^g(\pi(z)g))(z) = V_g V_g^* (f \cdot V_gg(\cdot-z))(z) = P_{V_g(L^2)}[f \cdot V_gg(\cdot-z)](z).
    \end{align}
    If we had some $L^\infty$ continuity conditions for the orthogonal projection onto the Gabor space we could therefore get more guarantees on the performance of this method. Note also that since $V_gg$ is continuous, we could use $V_g(A_f^g(\pi(z)g))$ to estimate the value of $f$ for a neighborhood of $z$ which would lead to a shorter runtime but worse recovery performance.

    \begin{proof}[Proof of Theorem \ref{theorem:gabor_projection}]
        The proof essentially boils down to expanding the short-time Fourier transform and synthesis $V_g^*$ in \eqref{eq:gabor_projection_estimator}, changing the order of integration, and identifying the blurring kernel. Indeed,
        \begin{align*}
            V_\varphi\big(A_f^g(\pi(z)g)\big)(z) &= \int_{\R^d} V_g^*\big(f \cdot V_g(\pi(z)\varphi)\big)(t) \overline{\pi(z)\varphi(t)}\,dt\\
            &=\int_{\R^d} \int_{\R^{2d}} f(w) V_g(\pi(z)\varphi)(w) \pi(w)g(t)\overline{\pi(z)\varphi(t)}\,dw\,dt.
        \end{align*}
        From here we may use Fubini as can be seen by considering
        \begin{align*}
        \int_{\R^d} \int_{\R^{2d}} &\big|f(w) V_g(\pi(z)\varphi)(w) \pi(w)g(t)\overline{\pi(z)\varphi(t)}\big|\,dw\,dt\\
        &= \int_{\R^{2d}} |f(w)| |V_g \varphi(w-z)| \underbrace{\int_{\R^d} \big| \pi(w)g(t)\overline{\pi(z)\varphi(t)} \big|\,dt}_{\leq \Vert g \Vert_{L^2} \Vert \varphi \Vert_{L^2}} \,dw\\
        &\leq \Vert g \Vert_{L^2} \Vert \varphi \Vert_{L^2} \Vert f \Vert_{L^1} \Vert V_g \varphi \Vert_{L^\infty} < \infty.
        \end{align*}
        Hence we may continue with a similar computation as
            \begin{align*}
            V_\varphi(A_f^g(\pi(z)g))(z) &= \int_{\R^{2d}} f(w) e^{-2\pi i x \cdot \eta} V_g\varphi(w-z) \left( \int_{\R^d} \pi(w)g(t)\overline{\pi(z)\varphi(t)} \,dt\right)\,dw\\
            &=\int_{\R^{2d}} f(w)  e^{-2\pi i x \cdot \eta}V_g\varphi(w-z) \overline{V_g(\pi(z) \varphi)(w)}\,dw\\
            &=\int_{\R^{2d}} f(w) |V_\varphi g(z-w)|^2\,dw = f * |V_\varphi g|^2(z)
        \end{align*}
        where we used that $V_g(\pi(z)\varphi)(w) = e^{-2\pi i x \cdot \eta} V_g\varphi(w-z) $ for $z = (x,\omega),\,w = (y, \eta)$ and cancelled out the two phase factors.
    \end{proof}
    Note in particular that the above construction handles noise in the input $\pi(z)\varphi$ gracefully in the sense that if we add a perturbation, the effect on the absolute value of the estimator is proportional to the $L^2$ energy of the noise as can be seen by the linearity of the estimator and the boundedness of $A_f^g$ and $V_\varphi$. 

    \section{Numerical implementation}\label{sec:numerical_implementation}
    In computer applications there is no continuum and the integral in the the localization operator definition \eqref{eq:loc_op_def} is replaced by a sum, most often over some lattice, yielding what is referred to as a \emph{Gabor multiplier} \cite{Feichtinger2003}. While showing that the results of this paper carry over to this setting is a non-trivial undertaking which we do not attempt, we settle for investigating the numerical behavior and draw only empirical conclusions. Do note however that many results on localization operators do carry over to the Gabor multiplier setting \cite{Cordero2006_prod, Feichtinger2003, Feichtinger2023_measure} and that in particular localization operators can be approximated in the $\mathcal{S}^1$ norm by Gabor multipliers \cite{Feichtinger2023_measure}.

    Before looking at details, we present a brief visual comparison between the different methods for a collection of symbols and lattice parameters. All of the figures presented in this section were generated using the Large Time/Frequency Analysis Toolbox (LTFAT) \cite{ltfatnote030} and the associated code can be found in the GitHub repository\footnote{\url{https://github.com/SimonHalvdansson/Localization-Operator-Symbol-Recovery}}.
    \begin{figure}[H]
        \centering
        \includegraphics[width=6in]{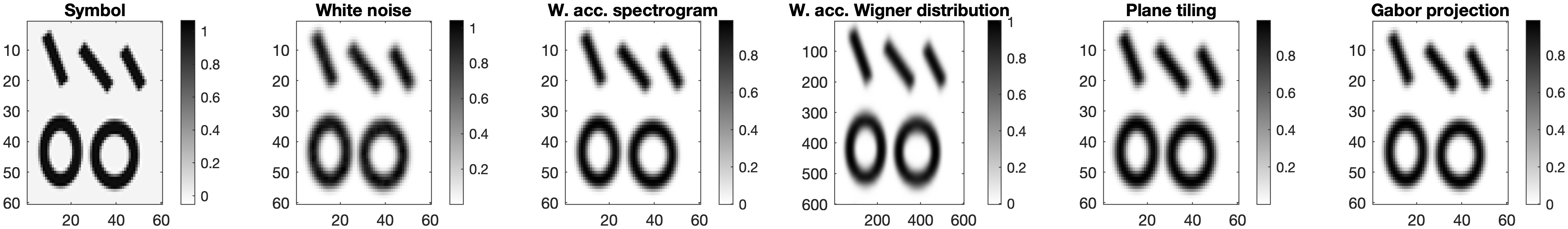}
        \vskip2mm
        \includegraphics[width=6in]{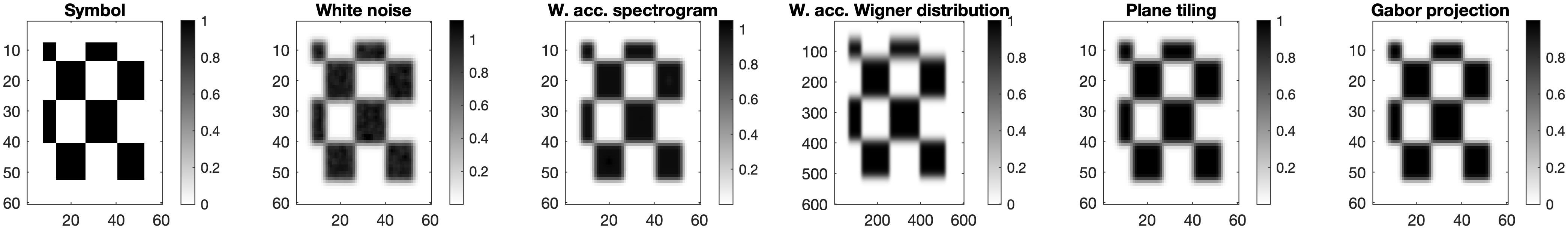}
        \vskip2mm
        \includegraphics[width=6in]{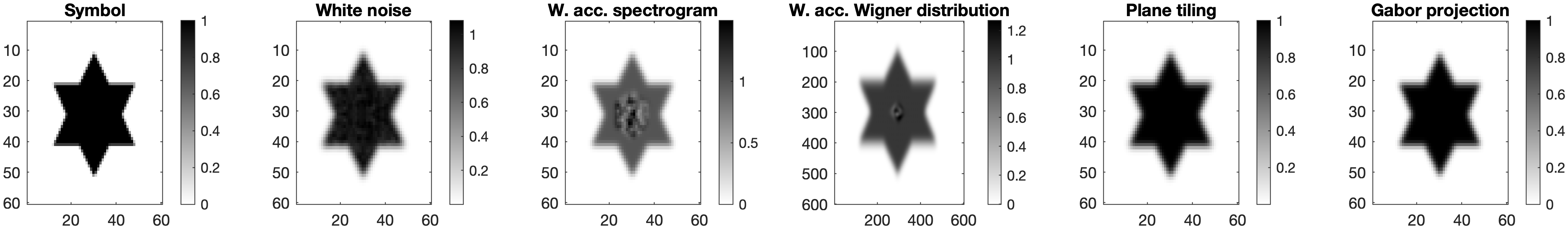}
        \vskip2mm
        \includegraphics[width=6in]{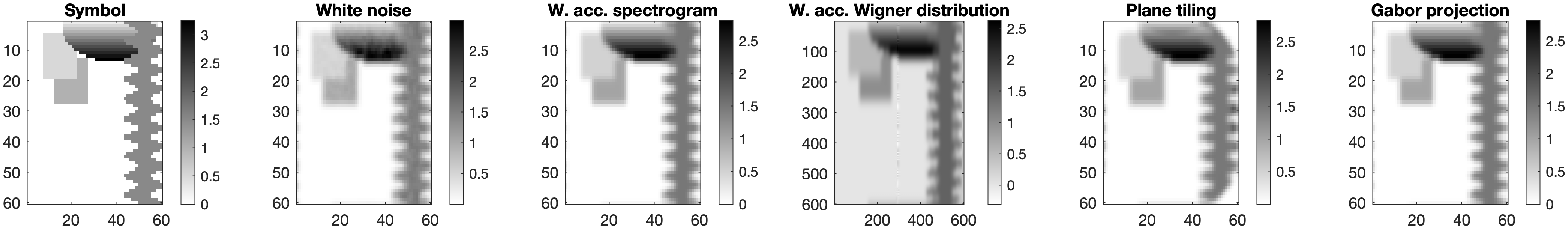}
        \caption{Overview of all methods for a collection of four symbols.}
        \label{fig:overview}
    \end{figure}
    
    \subsection{Implementation details and examples}
    In this section we go through all methods and discuss implementation details.

    \subsubsection{White noise}
    Looking at the formulation of Proposition \ref{prop:white_noise_intermediate} and Theorem \ref{theorem:L1_theorem} in detail, one sees that the process for how the average observed spectrogram approximates $f$ has many intermediate steps. Essentially, $\rho_K \to \vartheta \approx f^2 * |V_gg|^2 \approx f^2$. To highlight the differences between these steps and to show how the estimator handles non-binary symbols, we present plots of all the quantities next to each other below.
    \begin{figure}[H]
        \centering
        \includegraphics[width=6in]{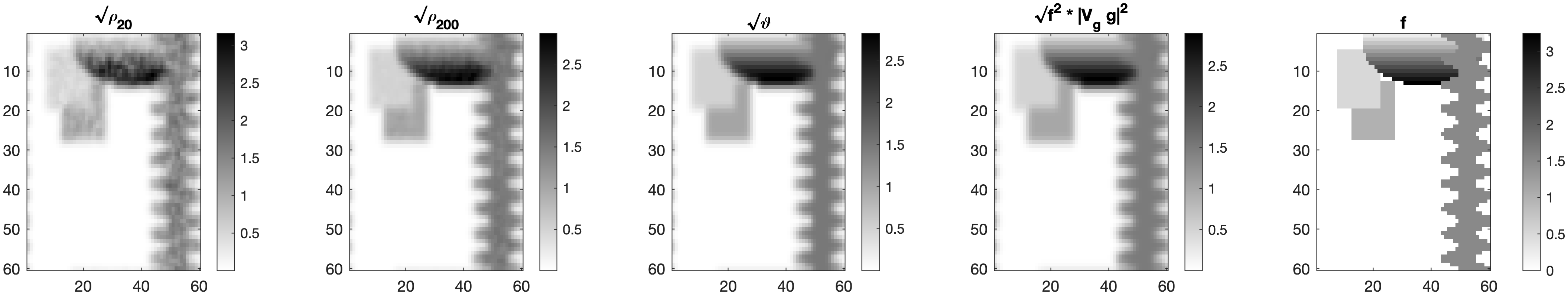}
        \caption{All the intermediate steps in the white noise approximation detailed in Section \ref{sec:white_noise_recovery} for $\varphi = g$. The subscript on $\rho$ indicates the number of samples $K$ used.}
    \end{figure}
    Note in particular that the difference between $\vartheta$ and $f^2 * |V_g^g|$ which corresponds to the error estimate in Lemma \ref{lemma:E_1S_1_bound} is negligible in the above example. However, the averaging of $200$ observations in $\rho_{200}$ leads to a visible difference from $\vartheta$ which is not present in the limit of an infinite number of observations. The conclusion here is that the convergence, which is shown to have $L^1$ error proportional to $\frac{1}{\sqrt{K}}$ in Theorem \ref{theorem:L1_theorem} where $K$ is the number of observations, really is slow to converge in practice.

    Since our implicit estimator for $|f|$ in Proposition \ref{prop:white_noise_intermediate} and Theorem \ref{theorem:L1_theorem} is $\sqrt{\frac{\rho}{\sigma^2}}$, we need some knowledge of $\sigma^2$ in order to be able to scale our estimator correctly. If we have the ability to inspect the noise realizations $(\mathcal{N}_k)_{k=1}^K$, this is straight-forward, either via the variance of each $\mathcal{N}_k$ or as the average value of $|V_\varphi(\mathcal{N}_k)|^2$ as can be seen from the calculations in the proof of Lemma \ref{lemma:prob_geq_t}. Note however that the variance $\sigma^2$ affects our estimator linearly so even without an estimate for $\sigma^2$ we can correctly estimate $|f|$ up to a constant factor.

    In \cite{Romero2022}, there is no need to estimate the variance explicitly as the mask is assumed to be binary. Implicitly, they use the maximum value of the estimator $\Vert \rho \Vert_\infty$ as an estimate of the variance as they use this to normalize $\rho$.
    
    \subsubsection{Weighted accumulated spectrogram}\label{sec:num_acc_spec}
    In the following example, we set both the reconstruction and original window to be the standard Gaussian for convenience. Consequently, the blurring kernel or equivalently the impulse response of the system $f \mapsto \sum_m \lambda_m |V_g h_m|^2$ is given by a two-dimensional Gaussian as this is the STFT of a Gaussian with respect to itself. In the finite setting, it is easy to also find the impulse response of the system by letting the input symbol be a Dirac delta. This approach is employed in the figure below where we have deconvolved the estimator to recover the original symbol precisely as
    $$
        \mathcal{F}^{-1}\left( \frac{\mathcal{F}\left(\sum_m \lambda_m |V_g h_m|^2\right)}{\mathcal{F}\left( \sum_m \lambda_m^0 |V_g h_m^0|^2 \right)} \right)\qquad \text{ where } A_{\delta_0}^g = \sum_m \lambda_m^0 (h_m^0 \otimes h_m^0).
    $$
    \begin{figure}[H]
        \centering
        \includegraphics[width=6in]{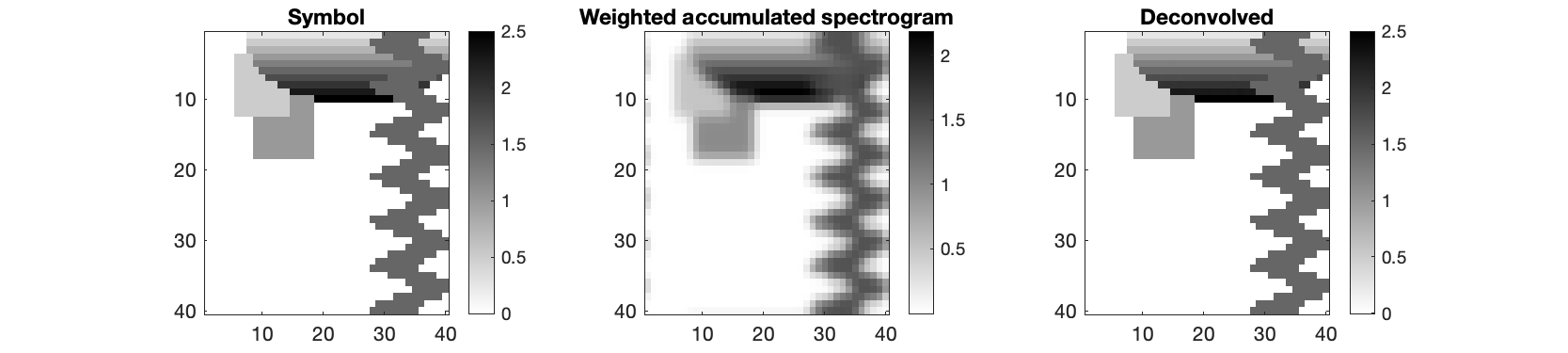}
        \caption{A symbol, the associated accumulated spectrogram and the deconvolved estimate of the symbol.}
    \end{figure}
    This deconvolution procedure is clearly very unstable and if we increase the resolution of the frame we eventually encounter noticeable errors as can be seen below.
    \begin{figure}[H]
        \centering
        \includegraphics[width=6in]{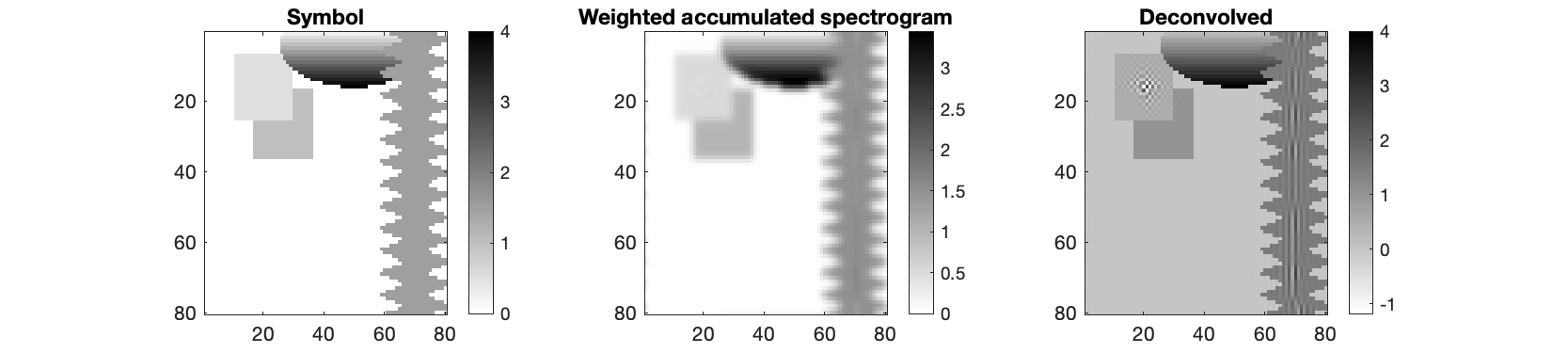}
        \caption{Example of a deconvolution with visible errors.}
    \end{figure}
    The performance of the estimator is also dependent on the orthogonality of the eigenfunctions. In the following example, we have a large amount of eigenfunctions with very similar eigenvalues and the eigenfunctions from LTFAT are not perfectly orthogonal, resulting the visible errors.
    \begin{figure}[H]
        \centering
        \includegraphics[width=6in]{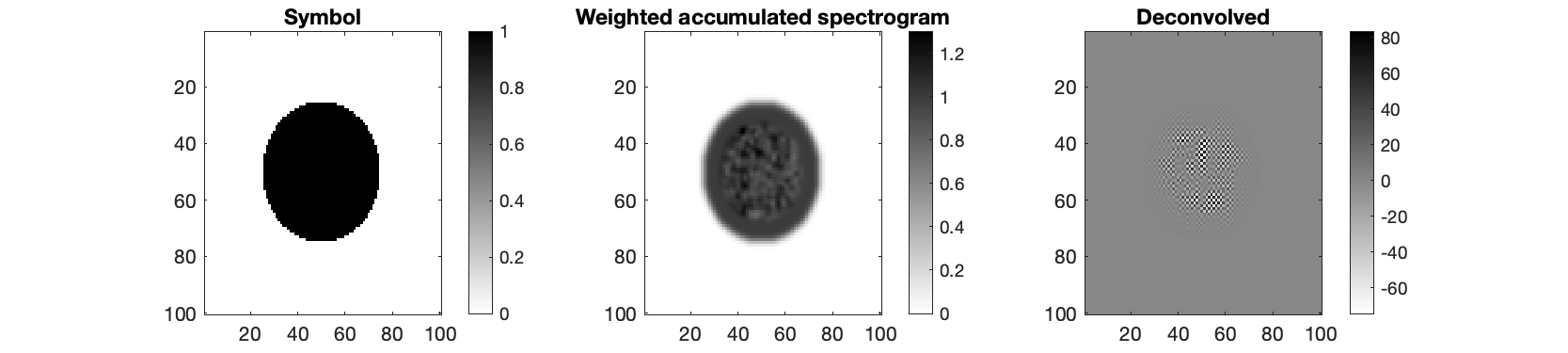}
        \caption{Example of a failed deconvolution due to non-orthogonal eigenfunctions.}
        \label{fig:acc_spec_circle_error}
    \end{figure}
    
    \subsubsection{Weighted accumulated Wigner distribution}\label{sec:num_acc_wig}
    The Wigner-based approach in Theorem \ref{theorem:fourier_deconvolution} is notably more direct than the spectrogram approach in Theorem \ref{theorem:accumulated} in that there is no reconstruction window. Numerically however, there is some ambiguity as to how the integral defining $W(\phi)$ \eqref{eq:wigner_def} should be discretized. The implementations in LTFAT and MATLAB \cite{MathWorksWVD} differ but provided the symbol $f$ is only supported on the positive frequency half of phase space, we get similar results. In the implementation, we employ a procedure which compresses the matrix defining the symbol so that it is zeroed out for negative frequency components to avoid these issues.
    \begin{figure}[H]
        \centering
        \includegraphics[width=6in]{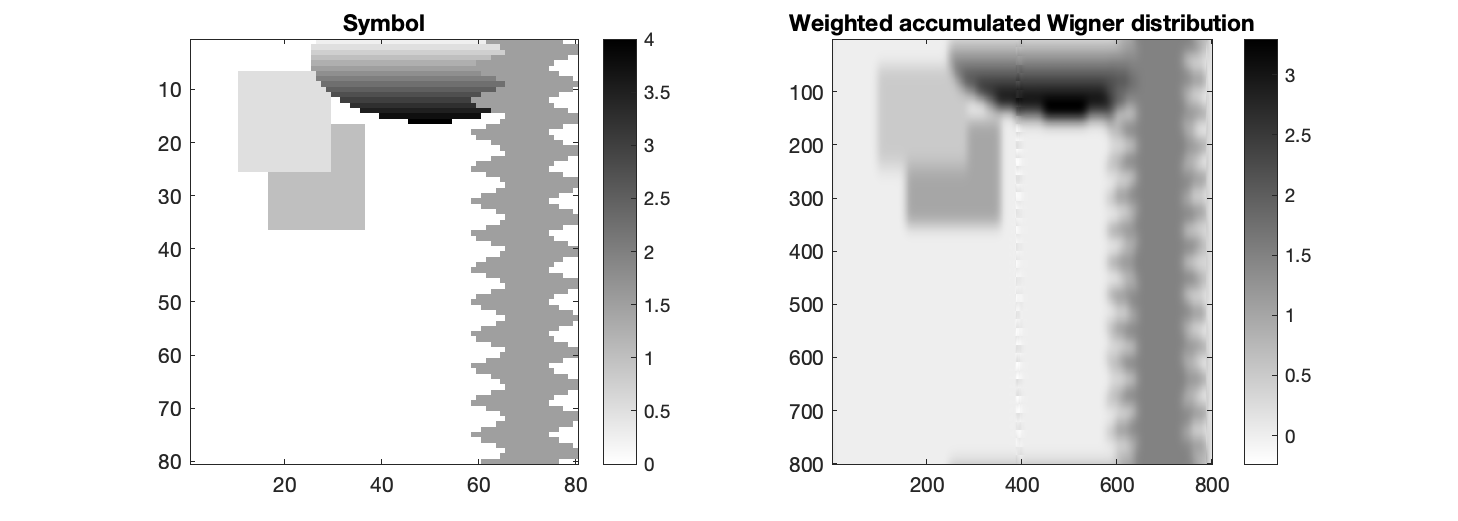}
        \caption{A symbol and the corresponding weighted accumulated Wigner distribution estimator.}
    \end{figure}
    Due to the aforementioned discretization problem and the positive frequency restriction, finding the system impulse response to perform a deconvolution is not as straight-forward as in the weighted accumulated spectrogram case and we do not attempt it.

    Note that in the example above and those in Figure \ref{fig:overview}, the weighted accumulated Wigner distribution is considerably more blurry in the frequency direction than in the time direction. For both cases, the window function is a standard Gaussian.
    
    \subsubsection{Plane tiling}\label{sec:plane_tiling_numerics}
    As discussed briefly near the end of Section \ref{sec:plane_tiling}, by time-frequency shifting a collection of Hermite functions we can get an orthonormal basis which has an appropriate center in the time-frequency plane. This is illustrated in the following figure where we use different number of basis elements to highlight the incremental nature of the approximation.
    \begin{figure}[H]
        \centering
        \includegraphics[width=5.5in]{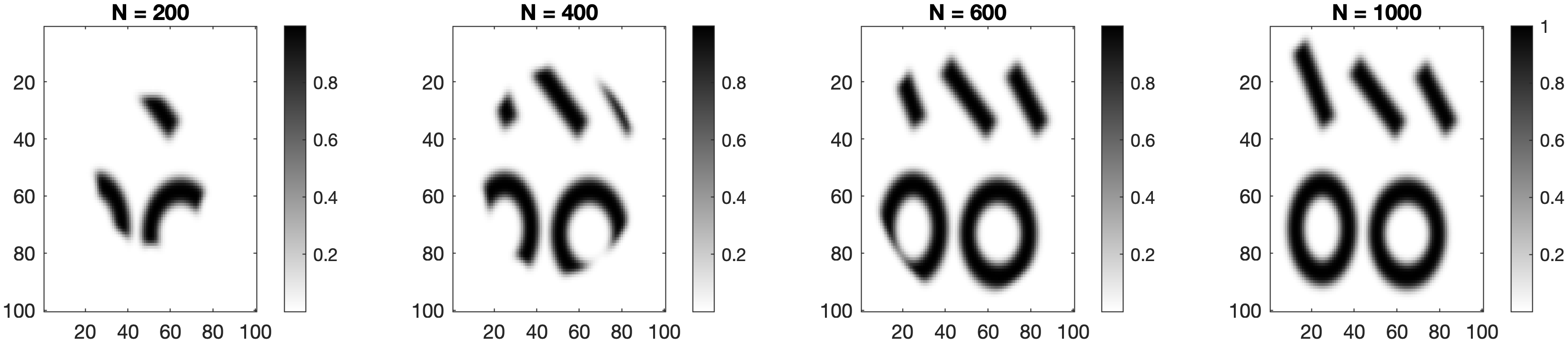}
        \caption{The plane tiling estimator for the same orthonormal basis with different number of terms $N$.}
    \end{figure}
    As each spectrogram has total $L^1$ energy $1$, the number of required basis elements is dependent on the size of the support of the symbol and the resolution of the frame.

    To obtain an orthonormal basis which tiles the plane differently from the Hermites, one can use the eigenfunctions of a localization operator with a prescribed symbol.
    
    \subsubsection{Gabor projection}\label{sec:gabor_projection_details}
    As discussed in the introduction and as is clear from the formulation of Theorem \ref{theorem:gabor_projection}, Gabor projection yields an identical estimator to the weighted accumulated spectrogram. From the numerical side, the main differences are that we can choose which regions of the symbol we want to recover. Moreover, since we are not relying on the eigendata of the operator, we are not susceptible to the type of problems that lead to the issue highlighted in Figure \ref{fig:acc_spec_circle_error} above. This allows us to perform deconvolution for a wider set of symbols and lattice parameters as can be seen by comparing Figure \ref{fig:acc_spec_circle_error} with the one below. 
    \begin{figure}[H]
        \centering
        \includegraphics[width=7in]{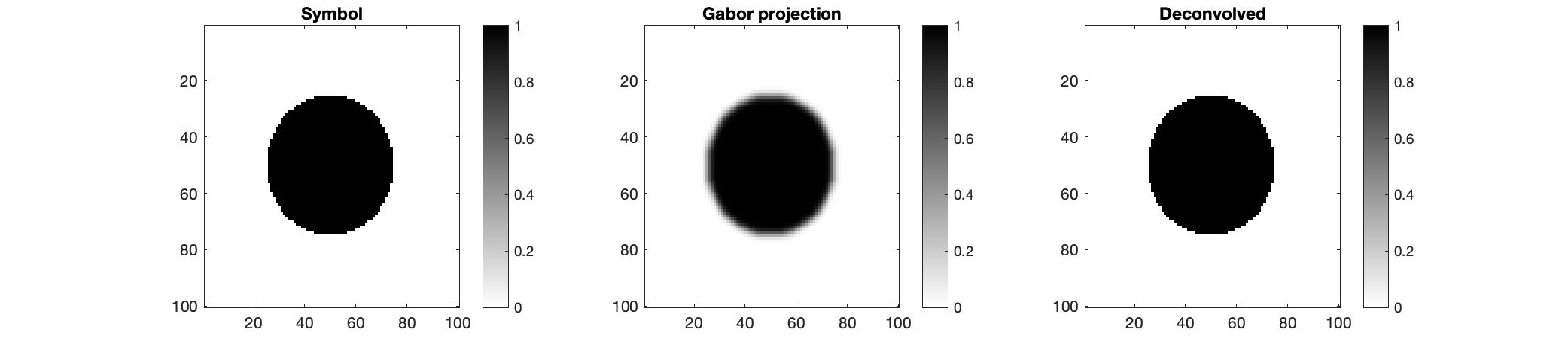}
        \caption{Example of Gabor projection at high frame resolution with a circle as a symbol and a deconvolved version.}
    \end{figure}

    \subsection{Performance comparison}
    Lastly we numerically investigate the $L^1$ error for the different methods. For an estimator $\rho$, we compute the error as 
    $$
    E = \frac{\Vert \rho - f \Vert_{L^1}}{\Vert f \Vert_{L^1}}
    $$
    where the $\Vert f \Vert_{L^1}$ denominator is for normalization. As was discussed in \cite{Abreu2015} and as is the case for all estimators discussed in this paper, the estimators mostly differ from the symbol by some sort of blurring kernel which is constant in size in the sense that if $f$ is dilated, the blurring kernel has a smaller effect. In the discrete setting, dilating the symbol is equivalent to increasing the resolution of the time-frequency frame and so for small lattice parameters we should expect smaller errors. For this reason, all of the errors reported below are for the same time-frequency frame and the absolute sizes of the errors should not be given too much weight. Instead, one should look at how the different methods compare. 
    \begin{table}[H]
        \caption{Reconstruction $L^1$ error for different methods and symbols. Errors are reported in percent of $L^1$ energy of the symbol. Abbreviations: \textbf{WN} = White Noise, \textbf{WAS} = Weighted Accumulated Spectrogram, \textbf{WAWD} = Weighted Accumulated Wigner Distribution, \textbf{PT} = Plane Tiling, \textbf{GP} = Gabor Projection}
        \begin{tabular}{l|lllll}
            \toprule
            \textbf{Symbol}   & \textbf{WN}  & \textbf{WAS}  & \textbf{WAWD}  & \textbf{PT} & \textbf{GP}\\ \midrule
            Circle & $14.8$ & $13.4$ & $15.7$ & $12.1$ & $10.0$\\
            Sum of Gaussians & $7.2$ & $4.5$ & $20.1$ & $6.2$ & $4.5$\\
            Star & $13.7$ & $13.1$ & $16.0$ & $12.0$ & $9.7$\\
            Lines \& circles  & $30.0$ & $23.5$ & $35.8$ & $29.6$ & $23.5$\\
            Blurred lines \& circles & $22.9$ & $17.4$ & $32.8$ & $22.8$ & $17.4$\\
            Tiles & $29.3$ & $23.7$ & $29.1$ & $28.1$ & $23.6$\\
            NTNU & $16.9$ & $13.1$ & $19.2$ & $15.3$ & $13.1$\\\bottomrule
        \end{tabular}
        \label{table:performance}
    \end{table}
    By comparing the errors for Gabor projection with the accumulated spectrogram we see the benefits of Gabor projection discussed in Section \ref{sec:gabor_projection_details} above. The reason the errors for plane tiling are generally lower than those for white noise is that we only use $K = 200$ samples.
    
    All of the symbols used in the comparison are collected in the figure below.
    \begin{figure}[H]
        \centering
        \includegraphics[width=5.8in]{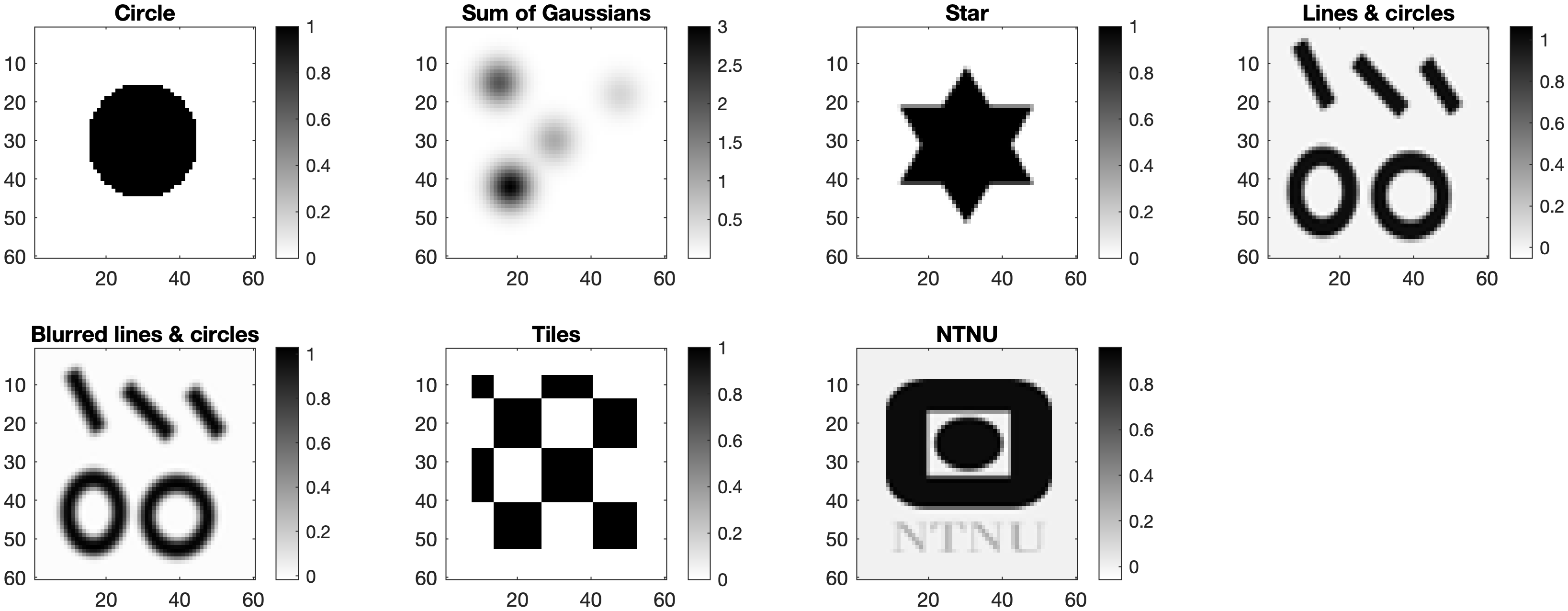}
        \caption{The symbols used for the comparison in Table \ref{table:performance}.}
    \end{figure}

    \subsection*{Acknowledgements}
    This project was partially supported by the Project Pure Mathematics in Norway, funded by Trond Mohn Foundation and Tromsø Research Foundation.

    \printbibliography

@book{grochenig_book,
	doi = {10.1007/978-1-4612-0003-1},
	year = {2001},
	publisher = {Birkh\"{a}user Boston},
	author = {K. Gr\"{o}chenig},
	title = {Foundations of Time-Frequency Analysis}
}

@article{Luef2018,
  doi = {10.1016/j.matpur.2017.12.004},
  year = {2018},
  publisher = {Elsevier {BV}},
  volume = {118},
  pages = {288--316},
  author = {Franz Luef and Eirik Skrettingland},
  title = {Convolutions for localization operators},
  journal = {Journal de Math{\'{e}}matiques Pures et Appliqu{\'{e}}es}
}

@article{Luef2019,
  doi = {10.1007/s00041-019-09663-3},
  year = {2019},
  publisher = {Springer Science and Business Media {LLC}},
  volume = {25},
  number = {4},
  pages = {2064--2108},
  author = {Franz Luef and Eirik Skrettingland},
  title = {Mixed-State Localization Operators: Cohen's Class and Trace Class Operators},
  journal = {Journal of Fourier Analysis and Applications}
}

@article{Luef2019_acc,
  doi = {10.1007/s00365-019-09465-2},
  year = {2019},
  publisher = {Springer Science and Business Media {LLC}},
  volume = {52},
  number = {1},
  pages = {31--64},
  author = {Franz Luef and Eirik Skrettingland},
  title = {On Accumulated {Cohen's} Class Distributions and Mixed-State Localization Operators},
  journal = {Constructive Approximation}
}

@article{Abreu2015,
  doi = {10.1090/tran/6517},
  year = {2015},
  publisher = {American Mathematical Society ({AMS})},
  volume = {368},
  number = {5},
  pages = {3629--3649},
  author = {Lu{\'{\i}}s Daniel Abreu and Karlheinz Gr\"{o}chenig and Jos{\'{e}} Luis Romero},
  title = {On accumulated spectrograms},
  journal = {Transactions of the American Mathematical Society}
}

@article{Werner1984,
  doi = {10.1063/1.526310},
  year = {1984},
  publisher = {{AIP} Publishing},
  volume = {25},
  number = {5},
  pages = {1404--1411},
  author = {R. Werner},
  title = {Quantum harmonic analysis on phase space},
  journal = {Journal of Mathematical Physics}
}

@article{Feichtinger2001,
  doi = {10.1307/mmj/1008719032},
  year = {2001},
  publisher = {Michigan Mathematical Journal},
  volume = {49},
  number = {1},
  author = {H. G. Feichtinger and K. Nowak},
  title = {A {S}zeg{\H{o}}-type theorem for {G}abor-{T}oeplitz localization operators},
  journal = {Michigan Mathematical Journal}
}

@article{Skrettingland2020,
  doi = {10.1007/s00041-020-09759-1},
  year = {2020},
  publisher = {Springer Science and Business Media {LLC}},
  volume = {26},
  number = {3},
  author = {Eirik Skrettingland},
  title = {Quantum Harmonic Analysis on Lattices and {G}abor Multipliers},
  journal = {Journal of Fourier Analysis and Applications}
}

@article{FEICHTINGER1989307,
title = {Banach spaces related to integrable group representations and their atomic decompositions, {I}},
journal = {Journal of Functional Analysis},
volume = {86},
number = {2},
pages = {307-340},
year = {1989},
doi = {10.1016/0022-1236(89)90055-4},
author = {Hans G Feichtinger and K.H Gröchenig},
}

@article{Luef2021,
  doi = {10.1016/j.jfa.2020.108883},
  year = {2021},
  publisher = {Elsevier {BV}},
  volume = {280},
  number = {6},
  pages = {108883},
  author = {Franz Luef and Eirik Skrettingland},
  title = {A {Wiener} {Tauberian} theorem for operators and functions},
  journal = {Journal of Functional Analysis}
}

@Inbook{Feichtinger2003,
    author="Feichtinger, Hans G. and Nowak, Krzysztof",
    editor="Feichtinger, Hans G. and Strohmer, Thomas",
    title="A First Survey of {G}abor Multipliers",
    bookTitle="Advances in Gabor Analysis",
    year="2003",
    publisher="Birkh{\"a}user Boston",
    address="Boston, MA",
    pages="99--128",
    doi="10.1007/978-1-4612-0133-5_5",
}

@article{daubechies1988_loc,
  doi = {10.1109/18.9761},
  year = {1988},
  publisher = {Institute of Electrical and Electronics Engineers ({IEEE})},
  volume = {34},
  number = {4},
  pages = {605--612},
  author = {I. Daubechies},
  title = {Time-frequency localization operators: a geometric phase space approach},
  journal = {{IEEE} Transactions on Information Theory}
}

@misc{Romero2022,
    author = {Romero, José Luis and Speckbacher, Michael},
    title = {Estimation of binary time-frequency masks from ambient noise},
    year = {2022},
    eprint={2205.10205},
    archivePrefix={arXiv},
	primaryClass={cs.SD}
}

@article{Cordero2005,
author = {Elena Cordero and Luigi Rodino},
title = {{Wick calculus: A time-frequency approach}},
volume = {42},
journal = {Osaka Journal of Mathematics},
number = {1},
publisher = {Osaka University and Osaka Metropolitan University, Departments of Mathematics},
pages = {43 -- 63},
year = {2005},
doi = {10.18910/5487},
}

@article{Cordero2006,
  doi = {10.1007/s00041-005-5077-7},
  year = {2006},
  publisher = {Springer Science and Business Media {LLC}},
  volume = {12},
  number = {4},
  pages = {371--392},
  author = {Elena Cordero and Karlheinz Gröchenig},
  title = {Symbolic {C}alculus and {F}redholm {P}roperty for {L}ocalization {O}perators},
  journal = {Journal of Fourier Analysis and Applications}
}

@incollection{Wang2005,
  doi = {10.1007/0-387-22794-6_12},
  publisher = {Kluwer Academic Publishers},
  pages = {181--197},
  author = {DeLiang Wang},
  year = {2005},
  title = {On Ideal Binary Mask As the Computational Goal of Auditory Scene Analysis},
  booktitle = {Speech Separation by Humans and Machines}
}

@InCollection{ltfatnote030,
  Title                    = {{The Large Time-Frequency Analysis Toolbox 2.0}},
  Author                   = {Pr\r{u}\v{s}a, Z. and S{\o}ndergaard, P. L. and Holighaus, N. and Wiesmeyr, Ch. and Balazs, P.},
  Booktitle                = {Sound, Music, and Motion},
  Publisher                = {Springer International Publishing},
  Year                     = {2014},
  Pages                    = {419-442},
  Series                   = {LNCS},
  doi                      = {10.1007/978-3-319-12976-1_25}
}

@article{Abreu2012,
  doi = {10.1088/0266-5611/28/11/115001},
  year = {2012},
  publisher = {{IOP} Publishing},
  volume = {28},
  number = {11},
  pages = {115001},
  author = {Lu{\'{\i}}s Daniel Abreu and Monika D\"{o}rfler},
  title = {An inverse problem for localization operators},
  journal = {Inverse Problems}
}

@incollection{Cordero2006_prod,
  doi = {10.1007/978-3-7643-8116-5_16},
  publisher = {Birkh\"{a}user Basel},
  pages = {279--295},
  year = {2006},
  author = {Elena Cordero and Karlheinz Gr\"{o}chenig},
  title = {On the Product of Localization Operators},
  booktitle = {Modern Trends in Pseudo-Differential Operators}
}

@incollection{CORDERO2007,
  doi = {10.1142/9789812770707_0005},
  year = {2007},
  publisher = {World {S}cientific},
  pages = {83--110},
  author = {Elena Cordero and Luigi Rodino and Karlheinz Gr\"{o}chenig},
  title = {LOCALIZATION OPERATORS AND TIME-FREQUENCY ANALYSIS},
  booktitle = {Harmonic,  Wavelet and P-Adic Analysis}
}

@article{Rudelson2013,
  doi = {10.1214/ecp.v18-2865},
  year = {2013},
  publisher = {Institute of Mathematical Statistics},
  volume = {18},
  number = {none},
  author = {Mark Rudelson and Roman Vershynin},
  title = {Hanson-{W}right inequality and sub-gaussian concentration},
  journal = {Electronic Communications in Probability}
}

@article{Adamczak2015,
  doi = {10.1214/ecp.v20-3829},
  year = {2015},
  publisher = {Institute of Mathematical Statistics},
  volume = {20},
  author = {Radoslaw Adamczak},
  title = {A note on the {H}anson-{W}right inequality for random vectors with dependencies},
  journal = {Electronic Communications in Probability}
}

@article{Bardenet2021,
  doi = {10.1016/j.acha.2019.07.003},
  year = {2021},
  publisher = {Elsevier {BV}},
  volume = {50},
  pages = {73--104},
  author = {R{\'{e}}mi Bardenet and Adrien Hardy},
  title = {Time-frequency transforms of white noises and {G}aussian analytic functions},
  journal = {Applied and Computational Harmonic Analysis}
}

@article{Haimi2022,
  doi = {10.1007/s10955-022-02917-3},
  year = {2022},
  publisher = {Springer Science and Business Media {LLC}},
  volume = {187},
  number = {3},
  author = {Antti Haimi and G\"{u}nther Koliander and Jos{\'{e}} Luis Romero},
  title = {Zeros of {G}aussian {W}eyl{\textendash}{H}eisenberg Functions and Hyperuniformity of Charge},
  journal = {Journal of Statistical Physics}
}

@article{Ghosh2022,
  doi = {10.1109/tsp.2022.3153596},
  year = {2022},
  publisher = {Institute of Electrical and Electronics Engineers ({IEEE})},
  volume = {70},
  pages = {1104--1117},
  author = {Subhroshekhar Ghosh and Meixia Lin and Dongfang Sun},
  title = {Signal Analysis via the Stochastic Geometry of Spectrogram Level Sets},
  journal = {{IEEE} Transactions on Signal Processing}
}

@article{Stinespring1958,
  doi = {10.1515/crll.1958.200.200},
  year = {1958},
  publisher = {Walter de Gruyter {GmbH}},
  volume = {1958},
  number = {200},
  pages = {200--207},
  author = {W. Forrest Stinespring},
  title = {A Sufficient Condition for an Integral Operator to Have a Trace},
  journal = {Journal für die reine und angewandte Mathematik}
}

@article{Feichtinger1981,
  doi = {10.1007/bf01320058},
  year = {1981},
  publisher = {Springer Science and Business Media {LLC}},
  volume = {92},
  number = {4},
  pages = {269--289},
  author = {Hans G. Feichtinger},
  title = {On a new {S}egal algebra},
  journal = {Monatshefte für Mathematik}
}

@BOOK{Evans1991-hm,
  title     = "Measure theory and fine properties of functions",
  author    = "Evans, Lawrence Craig and Gariepy, Ronald F",
  publisher = "CRC Press",
  series    = "Studies in Advanced Mathematics",
  year      =  1991,
  address   = "Boca Raton, FL"
}

@book{Wong2002,
  doi = {10.1007/978-3-0348-8217-0},
  year = {2002},
  publisher = {Birkh\"{a}user Basel},
  author = {M. W. Wong},
  title = {Wavelet Transforms and Localization Operators}
}

@article{Strohmer2006,
  doi = {10.1016/j.acha.2005.06.003},
  year = {2006},
  publisher = {Elsevier {BV}},
  volume = {20},
  number = {2},
  pages = {237--249},
  author = {Thomas Strohmer},
  title = {Pseudodifferential operators and {B}anach algebras in mobile communications},
  journal = {Applied and Computational Harmonic Analysis}
}

@article{Fernndez2006,
  doi = {10.1016/j.jfa.2005.08.008},
  year = {2006},
  publisher = {Elsevier {BV}},
  volume = {233},
  number = {2},
  pages = {335--350},
  author = {Carmen Fern{\'{a}}ndez and Antonio Galbis},
  title = {Compactness of time-frequency localization operators on $L^2(\mathbb{R}^d)$},
  journal = {Journal of Functional Analysis}
}

@article{Berezin1971,
  doi = {10.1070/sm1971v015n04abeh001564},
  year = {1971},
  publisher = {{IOP} Publishing},
  volume = {15},
  number = {4},
  pages = {577--606},
  author = {F A Berezin},
  title = {Wick and anti-{W}ick operator symbols},
  journal = {Mathematics of the {USSR}-Sbornik}
}

@article{Grchenig2011,
  doi = {10.1007/s11854-011-0017-8},
  year = {2011},
  publisher = {Springer Science and Business Media {LLC}},
  volume = {114},
  number = {1},
  pages = {255--283},
  author = {Karlheinz Gr\"{o}chenig and Joachim Toft},
  title = {Isomorphism properties of {T}oeplitz operators and pseudo-differential operators between modulation spaces},
  journal = {Journal d{\textquotesingle}Analyse Math{\'{e}}matique}
}

@article{Hrmander1979,
  doi = {10.1002/cpa.3160320304},
  year = {1979},
  publisher = {Wiley},
  volume = {32},
  number = {3},
  pages = {359--443},
  author = {L. H\"{o}rmander},
  title = {The {W}eyl calculus of pseudo-differential operators},
  journal = {Communications on Pure and Applied Mathematics}
}

@article{Fulsche2020,
  doi = {10.1016/j.jfa.2020.108661},
  year = {2020},
  publisher = {Elsevier {BV}},
  volume = {279},
  number = {7},
  pages = {108661},
  author = {Robert Fulsche},
  title = {Correspondence theory on $p$-{F}ock spaces with applications to {T}oeplitz algebras},
  journal = {Journal of Functional Analysis}
}

@ARTICLE{Daubechies1990,
  author={Daubechies, I.},
  journal={IEEE Transactions on Information Theory}, 
  title={The wavelet transform, time-frequency localization and signal analysis}, 
  year={1990},
  volume={36},
  number={5},
  pages={961-1005},
  doi={10.1109/18.57199}
  }

@article{Cohen1989,
  doi = {10.1109/5.30749},
  year = {1989},
  publisher = {Institute of Electrical and Electronics Engineers ({IEEE})},
  volume = {77},
  number = {7},
  pages = {941--981},
  author = {L. Cohen},
  title = {Time-frequency distributions-a review},
  journal = {Proceedings of the {IEEE}}
}

@article{Feichtinger2023_measure,
    title = {Measure-operator convolutions and applications to mixed-state {Gabor} multipliers},
    year = {2023},
    author = {H. G. Feichtinger and S. Halvdansson and F. Luef},
    eprint={2308.04985},
    archivePrefix={arXiv},
    primaryClass={math.FA}
}

@article{Hazrati2013,
  doi = {10.1121/1.4789891},
  year = {2013},
  publisher = {Acoustical Society of America ({ASA})},
  volume = {133},
  number = {3},
  pages = {1607--1614},
  author = {Oldooz Hazrati and Jaewook Lee and Philipos C. Loizou},
  title = {Blind binary masking for reverberation suppression in cochlear implants},
  journal = {The Journal of the Acoustical Society of America}
}

@article{Abreu2022_gaf,
  doi = {10.1007/s00041-022-09979-7},
  year = {2022},
  publisher = {Springer Science and Business Media {LLC}},
  volume = {28},
  number = {6},
  author = {Lu{\'{\i}}s Daniel Abreu},
  title = {Local Maxima of White Noise Spectrograms and {G}aussian entire functions},
  journal = {Journal of Fourier Analysis and Applications}
}

@article{Chi2012,
  doi = {10.1121/1.3697530},
  year = {2012},
  publisher = {Acoustical Society of America ({ASA})},
  volume = {131},
  number = {5},
  pages = {EL361--EL367},
  author = {Tai-Shih Chi and Ching-Wen Huang and Wen-Sheng Chou},
  title = {A frequency bin-wise nonlinear masking algorithm in convolutive mixtures for speech segregation},
  journal = {The Journal of the Acoustical Society of America}
}

@INPROCEEDINGS{Olivero2010,
  author={Olivero, Anaïk and Torrésani, Bruno and Kronland-Martinet, Richard},
  booktitle={2010 18th European Signal Processing Conference}, 
  title={A new method for {G}abor multipliers estimation: {A}pplication to sound morphing}, 
  year={2010},
  pages={507-511}}

@ARTICLE{Kreme2021,
    author={Krémé, A. Marina and Emiya, Valentin and Chaux, Caroline and Torrésani, Bruno},
    journal={IEEE Journal of Selected Topics in Signal Processing}, 
    title={Time-{F}requency {F}ading {A}lgorithms {B}ased on {G}abor {M}ultipliers}, 
    year={2021},
    volume={15},
    number={1},
    pages={65-77},
    doi={10.1109/JSTSP.2020.3045938}
}

@INPROCEEDINGS{Rajbamshi2019,
    author={Rajbamshi, Shristi and Tauböck, Georg and Balazs, Peter and Abreu, Luís Daniel},
    booktitle={2019 27th European Signal Processing Conference (EUSIPCO)}, 
    title={Random {G}abor {M}ultipliers for {C}ompressive {S}ensing: {A} {S}imulation {S}tudy}, 
    year={2019},
    pages={1-5},
    doi={10.23919/EUSIPCO.2019.8903092}
}

@article{Bastianoni2020,
  doi = {10.1016/j.jmaa.2020.124480},
  year = {2020},
  publisher = {Elsevier {BV}},
  volume = {492},
  number = {2},
  pages = {124480},
  author = {Federico Bastianoni and Elena Cordero and Fabio Nicola},
  title = {Decay and smoothness for eigenfunctions of localization operators},
  journal = {Journal of Mathematical Analysis and Applications}
}

@book{deGosson2011,
  doi = {10.1007/978-3-7643-9992-4},
  year = {2011},
  publisher = {Springer Basel},
    series    = "Pseudo-Differential Operators",
  author = {Maurice A. de Gosson},
  title = {Symplectic Methods in Harmonic Analysis and in Mathematical Physics}
}

@article{Brons2012,
  doi = {10.1121/1.4747006},
  year = {2012},
  publisher = {Acoustical Society of America ({ASA})},
  volume = {132},
  number = {4},
  pages = {2690--2699},
  author = {Inge Brons and Rolph Houben and Wouter A. Dreschler},
  title = {Perceptual effects of noise reduction by time-frequency masking of noisy speech},
  journal = {The Journal of the Acoustical Society of America}
}

@article{Bayer2015,
  doi = {10.1007/s00020-014-2208-z},
  year = {2015},
  publisher = {Springer Science and Business Media {LLC}},
  volume = {82},
  number = {1},
  pages = {95--117},
  author = {Dominik Bayer and Karlheinz Gr\"{o}chenig},
  title = {{Time{\textendash}Frequency Localization Operators and a Berezin Transform}},
  journal = {Integral Equations and Operator Theory}
}

@misc{MathWorksWVD,
  title = {{Wigner-Ville distribution and smoothed pseudo Wigner-Ville distribution}},
  howpublished = {\url{https://se.mathworks.com/help/signal/ref/wvd.html}},
  note = {Accessed: 2023-09-12}
}

\end{document}